\tikzset{>=latex}
\tikzset{cross/.style={cross out, draw=black, fill=none, minimum size=2*(#1-\pgflinewidth), inner sep=0pt, outer sep=0pt}, cross/.default={2pt}}
\DeclareMathOperator{\Hom}{Hom}
\DeclareMathOperator{\Aut}{Aut}
\newtheorem{prop}{Proposition}[section]
\newtheorem{thm}[prop]{Theorem}
\newtheorem{cor}[prop]{Corollary}
\newtheorem{lem}[prop]{Lemma}
\newtheorem{thmintro}{Theorem}
\newtheorem{corintro}{Corollary}
\theoremstyle{definition}
\newtheorem{dfn}[prop]{Definition}
\newtheorem{ex}[prop]{Example}
\newtheorem{notation}[prop]{Notation}
\newtheorem*{dfnintro}{Definition}
\theoremstyle{remark}
\newtheorem*{remintro}{Remark}
\newtheorem{rem}[prop]{Remark}
\title[Scattering diagrams: polynomiality and the dense region]{Scattering diagrams: \\ polynomiality and the dense region}
\author{Tim Gr\"afnitz}
\address{Leibniz-Universit\"at Hannover \\ Institut f\"ur Algebraische Geometrie \\ Welfengarten 1, 30167 Hannover \\ Germany} 
\email{graefnitz@math.uni-hannover.de}
\author{Patrick Luo} 
\address{Patrick Luo, University of Cambridge, DPMMS, Wilberforce Road, CB3 0WB, UK}
\email{pl485@cam.ac.uk}
\begin{document}
\maketitle

\begin{abstract}
We use deformations and mutations of scattering diagrams to show that the coefficients of a scattering diagram with initial functions $f_1=(1+tx)^\mu$ and $f_2=(1+ty)^\nu$ are polynomial in $\mu,\nu$ and non-trivial in a certain dense region. We discuss consequences for Gromov-Witten invariants and quiver representations.
\end{abstract}

\setcounter{tocdepth}{1}
\tableofcontents
\thispagestyle{empty}

\section*{Introduction}

Scattering diagrams (introduced in \cite{KS1}) are a method to combinatorially encode families of automorphisms of an algebraic torus (or, more generally, elements of the Lie group associated to a pro-nilpotent Lie algebra). They are related to various subjects such as curve counting \cite{KS2}\cite{GPS}\cite{Bou1}\cite{AG}\cite{BBG}\cite{Gra}, quiver representations \cite{Rei2}\cite{GP}, stability conditions \cite{Bri}\cite{Bou2}, cluster algebras \cite{GHKK} and mirror symmetry \cite{GSrec}\cite{GHK}\cite{GScan}. In this paper we will partly discuss the first two, but we will mainly be interested in scattering diagrams in their own right. 

A scattering diagram (in dimension $2$) is a collection of rays $\mathfrak{d}\subset\mathbb{R}^2$ with attached functions $f_{\mathfrak{d}}\in\mathbb{C}[x^{\pm1},y^{\pm1}]\llbracket t\rrbracket$. It is completely described by the coefficients $c_{a,b}$ of its functions. We use the factorized representation 
\[ f_{\mathfrak{d}}=\prod_{k>0}(1+tx^ay^b)^{c_{ka,kb}}. \]
Using a change of lattice trick (as in \cite{GHKK}) it is enough to consider ``standard'' scattering diagrams $\mathfrak{D}^{\mu,\nu}$ which contain rays with functions $f_1=(1+tx)^\mu$ and $f_2=(1+ty)^\nu$. We write the coefficients of such a diagram as $c_{a,b}^{\mu,\nu}$ and will be mainly concerned with these numbers. We use deformations of scattering diagrams (as described in \cite{GPS}\cite{GHKK}) and mutations (see e.g. \cite{GP}) to make statements about the coefficients $c_{a,b}^{\mu,\nu}$.

Our main theorems are the following.

\begin{dfnintro}
Define $(\mu,\nu)$-mutations $\textbf T_1^{\mu,\nu}(a,b)=(\mu b-a,b)$ and $\textbf T_2^{\mu,\nu}(a,b)=(a,\nu a-b)$.
\end{dfnintro}

\begin{dfnintro}
We say $(a,b)\in\mathbb{Z}_{>0}^2$ is in the dense region $\Phi^{\mu,\nu}$ if
\[\frac{\mu \nu - \sqrt{\mu \nu (\mu \nu - 4)}}{2 \mu} < \frac{b}{a} < \frac{\mu \nu + \sqrt{\mu \nu (\mu \nu - 4)}}{2 \mu}. \]
\end{dfnintro}

\begin{thmintro}
\label{thm:main}
\begin{enumerate}[(a)]
\item If $(a,b)\in\mathbb{Z}_{>0}^2$ is in the dense region, then $c_{a,b}^{\mu,\nu}\neq 0$.
\item Otherwise, $c_{a,b}^{\mu,\nu}\neq 0$ if and only if $(a,b)$ is obtained from $(1,0)$ or $(0,1)$ via a sequence of $(\mu,\nu)$-mutations. In particular, $(a,b)$ must be primitive in this case.
\end{enumerate}
\end{thmintro}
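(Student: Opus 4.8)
The plan is to handle the two parts with complementary tools: part (b) via mutations, using that $\mathfrak D^{\mu,\nu}$ is the unique consistent scattering diagram with the prescribed initial rays, and part (a) via deformations and the tropical correspondence of \cite{GPS}. The substantive case is $\mu\nu>4$, where $\Phi^{\mu,\nu}$ is a genuine open cone; when $\mu\nu\le4$ the diagram is of finite or affine type, has only finitely many non-initial rays, and the statement follows by directly listing the finitely many mutation images of the seeds, so I would dispose of that case separately and assume $\mu\nu>4$ below.

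For part (b) the key step is to make the action of a mutation on $\mathfrak D^{\mu,\nu}$ precise: the mutation at the ray $\mathfrak d_i$ is a piecewise-linear, volume-preserving automorphism of $\mathbb R^2$ folded along $\mathbb R_{\ge0}e_i$; by uniqueness of the consistent completion it carries $\mathfrak D^{\mu,\nu}$ to a scattering diagram which, after a $\mathrm{GL}_2(\mathbb Z)$ relabelling, is again $\mathfrak D^{\mu,\nu}$ with the same exponents, and it matches rays together with their attached functions, inducing $\mathbf T_i^{\mu,\nu}$ on primitive ray directions. Hence $c_{a,b}^{\mu,\nu}\ne0$ if and only if $c^{\mu,\nu}_{\mathbf T_i^{\mu,\nu}(a,b)}\ne0$. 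Since the seeds $(1,0),(0,1)$ carry the non-trivial functions $f_1,f_2$, their whole orbit under $\langle\mathbf T_1^{\mu,\nu},\mathbf T_2^{\mu,\nu}\rangle$ consists of rays with non-zero coefficient, and since each $\mathbf T_i^{\mu,\nu}$ preserves $\gcd(a,b)$ while the seeds are primitive, all of these rays are primitive; this proves the ``if'' direction together with the primitivity statement. For the converse I would show that the support of $\mathfrak D^{\mu,\nu}$ is contained in the union of the closed dense region and the seed orbit, combining two facts: a scattering diagram is supported in the cone spanned by its initial wall directions, so $\mathfrak D^{\mu,\nu}$ has no non-initial ray outside the first quadrant; and the group $\langle\mathbf T_1^{\mu,\nu},\mathbf T_2^{\mu,\nu}\rangle$ — an infinite dihedral-type group in which $\mathbf T_1^{\mu,\nu}\mathbf T_2^{\mu,\nu}$ is hyperbolic with eigenvector slopes the two boundary values of $\Phi^{\mu,\nu}$ — maps any direction that lies neither in the closed dense region nor in the seed orbit to one outside the first quadrant, where (by the first fact and the equivalence above) the coefficient vanishes. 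Hence $c_{a,b}^{\mu,\nu}=0$ there. The fiddly points are the lattice bookkeeping through the iterated relabellings and the directions immediately adjacent to a seed ray, where the obvious mutation leaves $\mathbb Z_{>0}^2$ and one must instead mutate at the other seed.

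For part (a) I would deform $\mathfrak D^{\mu,\nu}$ as in \cite{GPS,GHKK}: replace $f_1=(1+tx)^\mu$ by $\mu$ distinct generic parallel lines with functions $1+t_ix$, and $f_2$ by $\nu$ such lines $1+s_jy$. By the perturbation lemma the coefficient $c_{a,b}^{\mu,\nu}$ is recovered as the corresponding asymptotic coefficient of the perturbed diagram, which by the tropical correspondence is a count of rational tropical curves of degree $(a,b)$ through the perturbed configuration, with all local multiplicities positive. For primitive $(a,b)$ it therefore suffices to exhibit a single such tropical curve whenever $(a,b)\in\mathbb Z_{>0}^2$ satisfies the inequality $\nu a^2+\mu b^2-\mu\nu ab<0$ cutting out $\Phi^{\mu,\nu}$; this is a tropical avatar of the fact that an imaginary root admits a stable quiver representation, and I expect the explicit, uniform construction of such a curve — for instance as a caterpillar tree whose edge weights are forced by $(a,b)$ and whose vertex multiplicities are all at least $1$ — for every lattice point strictly between the slopes $\tfrac{\mu\nu\pm\sqrt{\mu\nu(\mu\nu-4)}}{2\mu}$ to be the main obstacle of the whole argument. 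For non-primitive $(a,b)$ the primitive direction lies again in $\Phi^{\mu,\nu}$, and one propagates non-vanishing through the factorised product; equivalently one invokes non-emptiness and positivity of the Euler characteristic of the moduli of semistable representations with the imaginary dimension vector $(a,b)$ (see \cite{Rei2},\cite{GP}). In all cases the positivity of the contributions precludes cancellation, so $c_{a,b}^{\mu,\nu}\ne0$.
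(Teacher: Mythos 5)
Part (b) of your plan is essentially the paper's argument: mutation invariance (quoted there from \cite{GHKK}) makes the seed orbit non-trivial and primitive, and the complement of the dense region is exhausted by mutation images of the empty sectors adjacent to the initial rays, the boundary slopes $\rho_\pm^{\mu,\nu}$ arising as the fixed points of $T_2T_1$. One caveat: the paper seeds this argument with Proposition \ref{prop:bound} (no rays of slope in $(0,1/\mu)$ or $(\nu,\infty)$, proved by an induction with partial deformations and the change of lattice trick), whereas you only invoke support in the first quadrant and then push offending directions out of the quadrant by mutations; that requires the mutation identity in a range where the image direction leaves $\mathbb Z_{\geq0}^2$, which is beyond the form $f_m=f_{\mathbf T_i(m)}$ as stated and used in the paper, so this step needs justification rather than assertion.

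The genuine gap is part (a). Your proposal reduces it to exhibiting, for every lattice direction $(a,b)$ in the dense region, a rational tropical curve through the perturbed configuration, and you explicitly leave this construction open (``the main obstacle of the whole argument''); so the core non-vanishing claim is not proved. The paper takes a different and complete route: it shows that fullness of the fundamental region $\phi_0^{\mu,\nu}$ (slopes in $[2/\mu,\nu/2]$) propagates to all of $\Phi^{\mu,\nu}$ via mutations (Lemma \ref{lem:phiPhi}), uses the monotonicity $c_{a,b}^{\mu,\nu}\le c_{a,b}^{\mu',\nu'}$ from deformations to run an induction reducing $(\mu,\nu)$ with $\mu\nu>4$ to the base cases $\mathfrak D^{3,2}$ and $\mathfrak D^{1,5}$ (Proposition \ref{prop:step}), and settles those by explicit partial deformations: $\mathfrak D^{3,2}$ via the known rays $(n-1,n)$ of $\mathfrak D^{2,2}$ hitting the extra horizontal line (change of lattice gives $\mathfrak D^{n,2n}$, filling all slopes in $(1/2,1)$), and $\mathfrak D^{1,5}$ via an embedded copy of $\mathfrak D^{3,2}$. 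Your two fallbacks do not close the gap either: the quiver route (non-emptiness of stable moduli for imaginary roots) is only available in the literature for $\mu=\nu$ (Reineke), is exactly what the paper derives as a corollary of this theorem for $\mu\ne\nu$, and in any case Proposition \ref{prop:chi} only links $c_{a,b}^{\mu,\nu}$ to Euler characteristics for primitive $(a,b)$; and the non-primitive case of part (a), which the theorem does assert, is not handled by ``propagating through the factorised product'' without an argument, whereas the paper's notion of fullness (Definition \ref{def:full}) covers non-primitive $(a,b)$ directly because the base-case deformations produce such classes. Also note that for $\mu\nu\le4$ there is no claim to dispose of in part (a), since the dense region is only defined (non-empty) when $\mu\nu>4$.
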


\begin{remintro}
The above statement has been proved in the case $\mu=\nu$ in \cite[\S4.7]{GP}, using an existence statement for quiver representations provided by Reineke. Reineke told us that there should be a similar argument in the case $\mu\neq \nu$ using bipartite quivers \cite{RW}, but this has not been worked out in detail. Our proof is purely combinatorial in terms of scattering diagrams and does not make use of quiver representations.
\end{remintro}

The above theorem implies the existence of certain quiver representations: 

\begin{dfnintro}
Let $M_{(a,b)}^{(1,0)-st}(Q_{\mu,\nu})$ be the moduli space of (isomorphism classes of) $(1,0)$-stable representations of the complete bipartite quiver with $\mu+\nu$ vertices (see \S\ref{S:quiv}).
\end{dfnintro}

\begin{corintro}
If $(a,b)$ is in the dense region (i.e. satisfies the inequality above), then $M_{(a,b)}^{(1,0)-st}(Q_{\mu,\nu})$ has positive Euler characteristic and in particular is non-empty.
\end{corintro}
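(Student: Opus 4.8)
The plan is to deduce the corollary from Theorem~\ref{thm:main}(a) together with the quiver interpretation of $\mathfrak{D}^{\mu,\nu}$ set up in \S\ref{S:quiv}. Two inputs are needed. First, a \emph{dictionary}: under the identification of the standard scattering diagram with the complete bipartite quiver $Q_{\mu,\nu}$ equipped with its $(1,0)$-stability, the exponents in the factorized wall functions are the integer Donaldson--Thomas invariants of $Q_{\mu,\nu}$, so that for each $(a,b)\in\mathbb{Z}_{>0}^2$ one has, up to a sign,
\[ c_{a,b}^{\mu,\nu} \;=\; \pm\,\chi\!\left(M_{(a,b)}^{(1,0)-st}(Q_{\mu,\nu})\right). \]
Second, \emph{positivity}: $\chi\!\left(M_{(a,b)}^{(1,0)-st}(Q_{\mu,\nu})\right)$ is a non-negative integer, which vanishes precisely when the moduli space is empty. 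Granting both, the corollary is immediate: if $(a,b)$ lies in the dense region $\Phi^{\mu,\nu}$, then $c_{a,b}^{\mu,\nu}\neq 0$ by Theorem~\ref{thm:main}(a); hence $\chi\!\left(M_{(a,b)}^{(1,0)-st}(Q_{\mu,\nu})\right)\neq 0$ by the dictionary; hence $\chi>0$ by positivity; and in particular the moduli space is non-empty.

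For the dictionary I would start from Reineke's treatment of the Kronecker quiver, which yields exactly this equality when $\mu=\nu$ (see \cite[\S4.7]{GP}, building on \cite{Rei2}), and transport it to $Q_{\mu,\nu}$ using the complete-bipartite-quiver techniques of \cite{RW}. The points to pin down are: (i) the correct $(1,0)$-stability parameter on $Q_{\mu,\nu}$ for the dimension vectors attached to $(a,b)$; (ii) that $(1,0)$-semistability coincides with $(1,0)$-stability for those dimension vectors, so that $M_{(a,b)}^{(1,0)-st}(Q_{\mu,\nu})$ is a smooth projective variety and its integer Donaldson--Thomas invariant really is $\pm\chi$; and (iii) the matching of signs and of the two normalizations (factorized product versus exponential of a series) that makes the identity hold term by term in $(a,b)$, rather than only after summing over multiple covers.

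For positivity I would use the algebraic torus that acts on $M_{(a,b)}^{(1,0)-st}(Q_{\mu,\nu})$ by rescaling the $\mu\nu$ arrows of $Q_{\mu,\nu}$: for the complete bipartite quiver this action has only finitely many fixed points (a finiteness statement that is one of the main tools in \cite{RW}), so the Euler characteristic equals the number of fixed points -- a non-negative integer that is zero if and only if the variety is empty. (Equivalently, the number of $\mathbb{F}_q$-points is a polynomial in $q$ with non-negative coefficients.) I expect the main obstacle to be the dictionary in the case $\mu\neq\nu$: as the remark after Theorem~\ref{thm:main} records, this argument via bipartite quivers "has not been worked out in detail", so the substance of the proof is points (i)--(iii) above -- faithfully porting the $\mu=\nu$ computation of \cite{GP} to $Q_{\mu,\nu}$ along the lines of \cite{RW} -- after which the positivity input and the deduction from Theorem~\ref{thm:main}(a) are routine.
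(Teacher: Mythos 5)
Your overall route is the same as the paper's: Theorem \ref{thm:main}(a) (i.e.\ Theorem \ref{thm:full}) gives $c_{a,b}^{\mu,\nu}\neq 0$ in the dense region, and the identification of $c_{a,b}^{\mu,\nu}$ with $\chi\bigl(M_{(a,b)}^{(1,0)-st}(Q_{\mu,\nu})\bigr)$ plus positivity then yields the corollary. The main difference is that you treat the ``dictionary'' as something still to be established for $\mu\neq\nu$ and sketch a re-derivation (your points (i)--(iii)); in fact this is exactly the paper's Proposition \ref{prop:chi}, which is \cite[Corollary~9.1]{RW} and already covers the complete bipartite quiver $Q_{\mu,\nu}$ for arbitrary $\mu,\nu$, with equality on the nose (no sign) --- the remark you quote about the bipartite argument ``not having been worked out in detail'' concerns proving the dense-region/mutation statements via quiver existence results, not this wall-crossing identity, so the heavy lifting you defer is unnecessary. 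Two smaller points: your dictionary as stated for \emph{all} $(a,b)\in\mathbb{Z}_{>0}^2$ up to sign is too strong --- for non-primitive classes the relation between wall-function exponents and Euler characteristics of unframed stable moduli goes through the framed spaces and the functional equations of \cite[Theorem~8.1]{RW}, so one should restrict to primitive $(a,b)$ exactly as Proposition \ref{prop:chi} does (the paper's Corollary \ref{cor:nonempty} implicitly makes the same restriction); and for positivity the paper does not need torus localization, since $c_{a,b}^{\mu,\nu}\geq 0$ is already Proposition \ref{prop:pos} (from \cite{GHKK}), so $\chi=c_{a,b}^{\mu,\nu}>0$ follows directly --- though your localization argument via \cite{RW} would also work.
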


\begin{thmintro}
\label{polythm}
The coefficients \(c_{a,b}^{\mu, \nu}\) are polynomial in \(\mu, \nu\) of degrees \(a, b\) respectively. In the binomial expansion
\[ c_{a,b}^{\mu,\nu} = \sum_{k=1}^\mu\sum_{l=1}^\nu\lambda_{k,l}\binom{\mu}{k}\binom{\nu}{l} \]
we have $\lambda_{k,l}=0$ whenever $c_{a,b}^{k,l}=0$.
\end{thmintro}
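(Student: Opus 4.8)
The plan is to exploit the Lie-theoretic/deformation description of scattering. Replace the wall $(1+tx)^\mu$ by $\mu$ generic parallel walls each carrying $(1+tx)$, and $(1+ty)^\nu$ by $\nu$ generic parallel walls each carrying $(1+ty)$, and pass to the limit in which the walls of each family collide; this is the standard deformation argument of \cite{GPS}\cite{GHKK}, and in the limit the function on the ray of direction $(a,b)$ of $\mathfrak D^{\mu,\nu}$ is the product of the functions on all perturbed rays parallel to $(a,b)$. Equivalently, one can work with the two initial wall-crossing generators $\exp(\mu\,\xi_1)$, $\exp(\nu\,\xi_2)$, where $\xi_1=\log(1+tx)\cdot\partial$ and $\xi_2=\log(1+ty)\cdot\partial$ are fixed elements of the relevant pro-nilpotent Lie algebra, with $\xi_1$ raising $x$-degree by at least one and $\xi_2$ raising $y$-degree by at least one.

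First I would establish polynomiality and the degree bound. Solving the consistency equation (path-ordered product equals the identity) order by order in $t$ via Baker--Campbell--Hausdorff, every interior wall-crossing generator becomes a universal Lie polynomial in $\xi_1$ and $\xi_2$; hence its coefficients are polynomials in $\mu,\nu$, so $c_{a,b}^{\mu,\nu}$ is polynomial in $\mu,\nu$. For the degree, a bracket monomial containing $k$ copies of $\xi_1$ carries the scalar $\mu^k$ and raises $x$-degree by at least $k$, so any term contributing to the coefficient of $x^ay^b$ uses at most $a$ copies of $\xi_1$ and at most $b$ copies of $\xi_2$; this gives $\deg_\mu c_{a,b}^{\mu,\nu}\le a$ and $\deg_\nu c_{a,b}^{\mu,\nu}\le b$. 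Exactness of the degree will follow from the last step once $\lambda_{a,b}\ne 0$ is known, and that holds because $(a,b)$ is a nonzero ray of $\mathfrak D^{a,b}$ — by Theorem \ref{thm:main}, $(a,b)$ is either in the dense region of $\mathfrak D^{a,b}$ or obtained from $(1,0)$ or $(0,1)$ by $(a,b)$-mutations.

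Next I would read off the binomial expansion. In the deformed picture the $\mu$ walls of the first family are interchangeable, so a perturbed ray of direction $(a,b)$ is supported on a sub-configuration indexed by subsets $S\subseteq\{1,\dots,\mu\}$, $T\subseteq\{1,\dots,\nu\}$, and its contribution to $c_{a,b}^{\mu,\nu}$ depends only on $(|S|,|T|)$. Defining $\lambda_{k,l}$ to be the total contribution of sub-configurations using \emph{exactly} $k$ walls of the first family and $l$ of the second (equivalently, the part of the universal Lie polynomial using exactly $k$ copies of $\xi_1$ and $l$ of $\xi_2$), inclusion--exclusion over subsets gives
\[ c_{a,b}^{\mu,\nu}=\sum_{k,l}\binom\mu k\binom\nu l\,\lambda_{k,l},\qquad c_{a,b}^{k,l}=\sum_{k'\le k,\ l'\le l}\binom{k}{k'}\binom{l}{l'}\,\lambda_{k',l'}, \]
with the $\lambda_{k,l}$ independent of $\mu,\nu$; combining the first identity with the degree bound forces $\lambda_{k,l}=0$ for $k>a$ or $l>b$, so only $1\le k\le a$, $1\le l\le b$ occur. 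For the vanishing statement, note that by Theorem \ref{thm:main} the hypothesis $c_{a,b}^{k,l}=0$ says precisely that the ray of direction $(a,b)$ is absent from $\mathfrak D^{k,l}$; if so, then no sub-configuration of $k$ walls of the first family and $l$ of the second can produce such a ray, whence the genuinely-$(k,l)$ contribution $\lambda_{k,l}$ vanishes.

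The main obstacle I anticipate is exactly this last point: controlling cancellation, i.e.\ ruling out that the genuinely-$(k,l)$ contribution $\lambda_{k,l}$ is nonzero while the $(a,b)$-ray is absent from $\mathfrak D^{k,l}$. Polynomiality, the degree bound, and the mere existence of the binomial expansion are all forced by the Lie-theoretic/deformation bookkeeping; the delicate part is to produce a description of $\lambda_{k,l}$ that manifestly sees only the $(a,b)$-ray of $\mathfrak D^{k,l}$ — here Theorem \ref{thm:main}, which identifies the non-vanishing rays of every $\mathfrak D^{k,l}$, together with the deformation and mutation analysis behind it, is what makes the argument accessible, but carrying it out requires care with the combinatorics of the deformed diagram (in particular finiteness modulo each power of $t$) and with signs.
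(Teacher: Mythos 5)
Your overall strategy is the same as the paper's: deform the two initial walls into $\mu+\nu$ individual lines (equivalently, introduce separate formal variables $t_{i,j}$), observe that the contribution of a sub-configuration depends only on how many lines of each family it uses, and collect terms to get the binomial expansion with coefficients $\lambda_{k,l}$ independent of $\mu,\nu$. The polynomiality and the degree bounds $\deg_\mu\le a$, $\deg_\nu\le b$ are correctly obtained this way (the BCH phrasing is just another way of saying that at most $a$ copies of the first family and $b$ of the second can contribute to the monomial $x^ay^b$).

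There is, however, a genuine gap, and it is exactly the one you flag at the end without closing: both remaining claims are cancellation statements, and neither follows from what you have written. For the exactness of the degrees you argue $\lambda_{a,b}\neq 0$ ``because $(a,b)$ is a nonzero ray of $\mathfrak D^{a,b}$''; but $c_{a,b}^{a,b}\neq 0$ only says $\sum_{k\le a,\,l\le b}\binom{a}{k}\binom{b}{l}\lambda_{k,l}\neq 0$, i.e.\ \emph{some} $\lambda_{k,l}$ is nonzero. For the vanishing statement you assert that if the $(a,b)$-ray is absent from $\mathfrak D^{k,l}$ then ``no sub-configuration of $k$ and $l$ walls can produce such a ray''; but $c_{a,b}^{k,l}=0$ only gives $\sum_{k'\le k,\,l'\le l}\binom{k}{k'}\binom{l}{l'}\lambda_{k',l'}=0$, and a priori the genuinely-$(k,l)$ part could be nonzero and cancel against lower terms, or individual sub-configurations could contribute with opposite signs. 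The missing ingredient in both places is the positivity of the individual deformed coefficients $\tilde c_{\mathbf P_1,\mathbf P_2}\ge 0$ (Proposition~\ref{prop:pos}, i.e.\ \cite[Proposition C.13]{GHKK}): with it, a vanishing nonnegative sum forces every term to vanish (giving $\lambda_{k,l}=0$ from $c_{a,b}^{k,l}=0$), and a single exhibited positive term — the full deformation of $\mathfrak D^{a,b}$ into $a+b$ lines, each contributing via $c_{1,1}^{1,1}\neq 0$ — forces $\lambda_{a,b}>0$. This is precisely how the paper concludes; Theorem~\ref{thm:main} is not needed here, and your appeal to it does not substitute for positivity.
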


The above theorem implies the vanishing of certain Gromov-Witten invariants:

\begin{dfnintro}
Let $\vec{p}_1,\vec{p}_2$ be tuples of integers of lengths $\ell_1,\ell_2$ and summing to $a$ and $b$, respectively. Let $N_{\vec{p}_1,\vec{p}_2}$ be the relative Gromov-Witten invariant on the blow up of $\mathbb{P}(1,a,b)$ in $\ell_1$ points on $D_a$ and $\ell_2$ points on $D_b$, counting rational curves of class $\beta=\gcd(a,b)H-\sum_{j=1}^{\ell_1}p_{1j}E_{1j}-\sum_{j=1}^{\ell_2}p_{2j}E_{2j}$ (see \S\ref{S:GW}).
\end{dfnintro}

\begin{corintro}
If $\gcd(a,b)=1$ and $c_{a,b}^{\mu,\nu}=0$, then $N_{\vec{p}_1,\vec{p}_2}=0$ for all $\vec{p}_1,\vec{p}_2$ of lengths $\mu,\nu$ and summing to $a,b$, respectively.
\end{corintro}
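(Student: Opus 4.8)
The plan is to derive this from Theorem~\ref{polythm} together with the identification of scattering coefficients with relative Gromov--Witten invariants from \cite{GPS}, specialised to the diagrams $\mathfrak D^{\mu,\nu}$ via the change-of-lattice trick (this identification being the content of \S\ref{S:GW}). The first thing to record is that $\gcd(a,b)=1$ makes the ray of direction $(a,b)$ primitive: no proper sub-multiple of $(a,b)$ occurs in the factorised form of $f_{\mathfrak d_{(a,b)}}$, so $c_{a,b}^{\mu,\nu}$ is literally the lowest-order coefficient of $\log f_{\mathfrak d_{(a,b)}}$, with none of the ``multiple cover'' corrections that usually intervene. This is exactly the kind of coefficient that \cite{GPS} computes enumeratively.

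The heart of the argument, which I would carry out in \S\ref{S:GW}, is to make this enumerative formula explicit and match it with the binomial expansion of Theorem~\ref{polythm}. After the change of lattice, the wall function of the ray $(a,b)$ becomes a generating series over rational curves in blow-ups of $\mathbb P(1,a,b)$ in points on $D_a$ and $D_b$; extracting its leading term and sorting the contributing curves by which of the blown-up points they actually pass through should yield an expression
\[ c_{a,b}^{\mu,\nu} \;=\; \sum_{k=1}^{\mu}\sum_{l=1}^{\nu}\binom{\mu}{k}\binom{\nu}{l}\,\lambda_{k,l},\qquad \lambda_{k,l}=\sum_{\vec p_1,\vec p_2} w_{\vec p_1,\vec p_2}\,N_{\vec p_1,\vec p_2}, \]
where the inner sum runs over tuples $\vec p_1,\vec p_2$ with all entries positive, of lengths $k,l$, summing to $a,b$, and the $w_{\vec p_1,\vec p_2}$ are positive combinatorial weights. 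Since the functions $\binom{\mu}{k}$ form a basis for polynomials in $\mu$, uniqueness of the binomial expansion identifies this $\lambda_{k,l}$ with the one in Theorem~\ref{polythm}; in particular $\lambda_{k,l}\ge 0$. Moreover, by the tropical correspondence of \cite{GPS} each $N_{\vec p_1,\vec p_2}$ is itself non-negative, being a sum over tropical curves of products of positive vertex multiplicities.

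Granting this, the corollary is immediate: if $c_{a,b}^{\mu,\nu}=0$, then the displayed sum vanishes with all summands $\ge 0$, so $\lambda_{k,l}=0$ for every $1\le k\le\mu$, $1\le l\le\nu$; as $w_{\vec p_1,\vec p_2}>0$ and $N_{\vec p_1,\vec p_2}\ge 0$, every $N_{\vec p_1,\vec p_2}$ occurring in $\lambda_{\mu,\nu}$ vanishes, i.e.\ $N_{\vec p_1,\vec p_2}=0$ for all $\vec p_1,\vec p_2$ of lengths $\mu,\nu$ summing to $a,b$. (Such tuples exist only when $\mu\le a$ and $\nu\le b$, in which range the statement is non-vacuous.) The one genuinely substantial step is the second paragraph: pinning down the change of lattice so that the primitive ray of $\mathfrak D^{\mu,\nu}$ really does match the ray whose function \cite{GPS} computes by curves in the stated blow-ups of $\mathbb P(1,a,b)$, and verifying that the resulting reorganisation is the binomial expansion of Theorem~\ref{polythm} with manifestly non-negative $\lambda_{k,l}$; the non-negativity of the individual $N_{\vec p_1,\vec p_2}$ then comes for free from the tropical side of \cite{GPS}.
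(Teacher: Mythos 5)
Your overall route is the paper's: for $\gcd(a,b)=1$ the coefficient $c_{a,b}^{\mu,\nu}$ is the leading coefficient of $\log f_{(a,b)}^{\mu,\nu}$, and via \cite[Theorem 5.4]{GPS} (Proposition \ref{prop:GPS}, Corollary \ref{cor:GPS}) this equals $\sum_{\vec p_1,\vec p_2}\binom{\mu}{\ell(\vec p_1)}\binom{\nu}{\ell(\vec p_2)}N_{\vec p_1,\vec p_2}$, a sum with strictly positive combinatorial weights over partitions of lengths at most $\mu,\nu$; the corollary then follows once each $N_{\vec p_1,\vec p_2}$ occurring is known to be non-negative. (Your detour through uniqueness of the binomial expansion of Theorem \ref{polythm} is harmless but not needed, and the change-of-lattice trick plays no role here: the blow-ups of $\mathbb P(1,a,b)$ enter directly through the multi-parameter diagram of Proposition \ref{prop:GPS}.)

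The gap is the non-negativity claim itself. You assert that each $N_{\vec p_1,\vec p_2}$ is non-negative ``by the tropical correspondence of \cite{GPS}, being a sum over tropical curves of products of positive vertex multiplicities.'' That is not what \cite{GPS} provides: the tropical/degeneration formula for these blow-up invariants carries signed multiple-cover factors $(-1)^{w-1}/w^2$ for contact orders $w\geq 2$ with the exceptional divisors, so it is not termwise positive, and the invariants themselves can be negative --- e.g.\ with $(a,b)=(1,1)$, $\mu=\nu=1$ one reads off from $\log(1+t_1t_2xy)$ that $N_{(2),(2)}=-\tfrac{1}{4}$; even for primitive classes whose partitions contain parts $\geq 2$ the positivity of $N_{\vec p_1,\vec p_2}$ is not visible term by term on the tropical side. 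The non-negativity you need is true for the $\gcd=1$ classes at hand, but the argument is exactly the paper's Proposition \ref{prop:van} combined with positivity of scattering coefficients: since $\gcd(\vec p_1,\vec p_2)=1$ (forced by $\gcd(a,b)=1$) there are no multiple-cover corrections in the log expansion, so $N_{\vec p_1,\vec p_2}$ equals the refined coefficient $c_{\vec p_1,\vec p_2}$ of the multi-parameter scattering diagram (Corollary \ref{cor:multi}), and these coefficients are non-negative. Once you replace the tropical-positivity assertion by this identification, your argument coincides with the paper's proof via Corollaries \ref{cor:multi}, \ref{cor:van} and \ref{cor:vanishing}.
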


\subsection*{Acknowledgements}

This paper grew out of a Summer Research in Mathematics project carried out at the University of Cambridge. This project was financially supported by Mark Gross' ERC Advanced Grant Mirror Symmetry in Algebraic Geometry (MSAG). We thank Dhruv Ranganathan for bringing us together and Markus Reineke for clarifying some questions about quiver representations.

\section{Preliminaries}

\subsection{Scattering diagrams}
We provide a definition for scattering diagrams, based on \cite{GPS}. See \cite{GPS}, \cite{GHKK} for more general definitions.

Let \(M \cong \mathbb Z^2\) be a lattice with basis \(e_1 = (1, 0), e_2 = (0, 1)\), and let \(N := \Hom_{\mathbb Z} (M, \mathbb Z)\). For \(m \in M\), let \(z^m \in \mathbb C[M]\) denote the corresponding element in the group ring. With \(x = z^{e_1}, y = z^{e_2}\), \(\mathbb C[M] = \mathbb C[x^{\pm 1}, y^{\pm 1}]\) is the ring of Laurent polynomials in \(x\) and \(y\).

Let \(R\) be an Artin local $\mathbb{C}$-algebra with maximal ideal \(\mathfrak m_R\), and \[\mathbb C[M] \widehat{\otimes}_{\mathbb C} R = \lim_{\longleftarrow} \mathbb C[M] \otimes_{\mathbb C} R/\mathfrak m_R^k .\]
We will take \(M = N = \mathbb Z^2\) and \(R = \mathbb C \llbracket t \rrbracket\) unless otherwise specified. In this case \(\mathbb C[M] \widehat{\otimes}_{\mathbb C} R = \mathbb C[x^{\pm 1}, y^{\pm 1}] \llbracket t \rrbracket\).

\begin{dfn}
A \emph{ray} or \emph{line} is a pair \( \mathfrak{d} = (\underline{\mathfrak{d}}, f_{\mathfrak d})\) where \(\underline{\mathfrak{d}} = b_{\mathfrak{d}} + \mathbb R_{\geq 0} m_{\mathfrak{d}}\) if it's a ray or \(\underline{\mathfrak{d}} = b_{\mathfrak{d}} + \mathbb R m_{\mathfrak{d}}\) if it's a line, \[f_{\mathfrak d} \in \mathbb C[z^{m_{\mathfrak{d}}}] \widehat{\otimes}_{\mathbb C} R \setminus \{1\} \subseteq \mathbb C[M] \widehat{\otimes}_{\mathbb C} R ,\] and \[f \equiv 1 \pmod{z^{m_{\mathfrak{d}}} \mathfrak m_R} .\] We can consider a line to be two rays in opposite directions originating from a common point, with the same function $f_{\mathfrak{d}}$.

A \emph{scattering diagram} \(\mathfrak D\) is a collection of rays and lines, such that for every \(k > 0\), there are finitely many rays and lines \((\underline{\mathfrak{d}}, f_{\mathfrak d})\) with \(f_{\mathfrak d} \not\equiv 1 \pmod{\mathfrak m_R^k}\).
\end{dfn}

\begin{dfn}
For a ray \(\mathfrak{d}\) and a curve \(\gamma\) in \(M_{\mathbb R}\) intersecting \(\mathfrak d\) transversally at \(p\), let \(n_{\mathfrak d} \in N\) annihilate \(m_{\mathfrak d}\) and evaluate positively on \(\gamma'(p)\). Define \(\theta_{\mathfrak d} = \theta_{\gamma, p, \mathfrak d} \in \Aut_{\mathbb C \llbracket t \rrbracket} (\mathbb C[M] \widehat{\otimes}_{\mathbb C} R)\) by \[\theta_{\mathfrak d} : z^m \mapsto z^m f_{\mathfrak d}^{\langle m, n_{\mathfrak d} \rangle} .\]
\end{dfn}

\begin{dfn}
A \emph{singularity} of a scattering diagram \(\mathfrak D\) is either a base point of a ray or an intersection between two rays or lines that consists of a single point.

Let \(\gamma : [0, 1] \to M_{\mathbb R}\) be a smooth curve which does not pass through any singularities and whose endpoints are not in any ray or line in the diagram. If all intersections of \(\gamma\) with rays or lines are transverse, we define the \emph{\(\gamma\)-ordered product} \(\theta_{\gamma, \mathfrak D} \in \Aut_R (\mathbb C[M] \widehat{\otimes}_{\mathbb C} R)\) in the following way. For each \(k\), as there are finitely many rays or lines with functions \(f_{\mathfrak d} \not\equiv 1 \pmod{\mathfrak m_R^k}\), let \(0 < p_1 \leq p_2 \leq \dots \leq p_s < 1\) be so that at each \(p_i\), \(\gamma(p_i) \in \mathfrak d_i\) for some ray or line \((\mathfrak d_i, f_{\mathfrak d_i})\), and when \(p_i = p_j\) for \(i \neq j\), \(\mathfrak d_i \neq \mathfrak d_j\) are different rays of the diagram, though may have the same set of points and function, where \(s\) is chosen to be as large as possible. Then let \(\theta_i = \theta_{\gamma, p_i, \mathfrak d_i}\) and \[\theta_{\gamma, \mathfrak D}^k = \theta_s \circ \dots \circ \theta_2 \circ \theta_1 .\] Then we define $\theta_{\gamma, \mathfrak D}$ as the formal limit \[\theta_{\gamma, \mathfrak D} = \lim_{k \to \infty} (\theta_{\gamma, \mathfrak D}^k) .\]

We say a diagram \(\mathfrak D\) is \emph{consistent} if \(\theta_{\gamma, \mathfrak D}\) is the identity map for every closed curve \(\gamma\) (for which \(\theta_{\gamma, \mathfrak D}\) is defined). Two diagrams $\mathfrak{D}$ and $\mathfrak{D}'$ are \emph{equivalent} if $\theta_{\gamma, \mathfrak D} = \theta_{\gamma, \mathfrak D'}$ for every closed curve $\gamma$.
\end{dfn}

\begin{prop}[\cite{KS1}, {\cite[Theorem 1.4]{GPS}}]
For a scattering diagram \(\mathfrak D\), there exists a consistent scattering diagram \(\mathfrak D_\infty \supseteq \mathfrak D\) such that \(\mathfrak D_\infty \setminus \mathfrak D\) consists only of rays.
\end{prop}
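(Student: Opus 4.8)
The plan is to build \(\mathfrak D_\infty\) by successive approximation in the \(\mathfrak m_R\)-adic filtration (equivalently, order by order in \(t\)), adding only rays at each stage. The structural input is that every wall-crossing automorphism \(\theta_{\mathfrak d}\) lies in the pro-unipotent group \(G = \exp \mathfrak g\) attached to the graded pro-nilpotent Lie algebra \(\mathfrak g = \bigoplus_{m \in M \setminus \{0\}} \mathfrak g_m\), where \(\mathfrak g_m\) is spanned by the \(R\)-linear derivations \(c\, z^m \partial_n\) with \(c \in \mathfrak m_R\), \(n \in N\), \(\langle m, n\rangle = 0\) (here \(\partial_n(z^{m'}) = \langle m', n\rangle z^{m'}\)). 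The \(\mathfrak m_R\)-adic filtration makes \(\mathfrak g\) complete, each truncation \(\mathfrak g / \mathfrak m_R^k \mathfrak g\) is nilpotent and involves only finitely many monomials, so path-ordered products, their logarithms, and \(\exp\) are all well-defined, and the bracket of elements of filtration degrees \(i\) and \(j\) has degree \(\geq i+j\). After a preliminary perturbation of the lines of \(\mathfrak D\) into general position (a standard device, not strictly necessary) we may also assume all singularities are simple.

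We induct on \(k\), constructing \(\mathfrak D_k \supseteq \mathfrak D\) with \(\mathfrak D_k \setminus \mathfrak D\) a finite set of rays and \(\theta_{\gamma, \mathfrak D_k} \equiv \mathrm{id} \pmod{\mathfrak m_R^{k+1}}\) for every closed loop \(\gamma\). For \(k = 0\) take \(\mathfrak D_0 = \mathfrak D\): consistency modulo \(\mathfrak m_R\) is automatic since each \(f_{\mathfrak d} \equiv 1\). For the step, fix a singularity \(p\) of \(\mathfrak D_k\) and a small counterclockwise loop \(\gamma_p\) around it; by hypothesis \(\log \theta_{\gamma_p, \mathfrak D_k}\) has filtration degree \(\geq k+1\), so its degree-\((k+1)\) part \(\psi_p \in \mathfrak g\) is well-defined (possibly zero). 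Decompose \(\psi_p = \sum_{m} \psi_{p,m}\) by \(M\)-degree; only finitely many terms are nonzero, and the \(m\)'s occurring are positive multiples of finitely many primitives \(m_0\). For each such \(m_0\) attach to the diagram a ray based at \(p\) in the direction \(\pm m_0\) dictated by the usual sign convention, carrying the function \(f\) uniquely determined modulo \(\mathfrak m_R^{k+2}\) by the requirement \(\log\theta_{\mathfrak d} \equiv -\sum_{\ell \geq 1}\psi_{p,\ell m_0}\pmod{\mathfrak m_R^{k+2}}\); such \(f\) exists because \(\exp\) restricts to a bijection between the abelian pro-nilpotent subalgebra \(\prod_{\ell\geq 1}\mathfrak g_{\ell m_0}\), modulo any power of \(\mathfrak m_R\), and the subgroup of \(G\) realized by single rays in direction \(m_0\), compatibly with prescribed lower-order terms. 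Let \(\mathfrak D_{k+1}\) be \(\mathfrak D_k\) together with all these new rays.

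Since every new function is \(\equiv 1 \pmod{\mathfrak m_R^{k+1}}\), all brackets between new rays and the rest of the diagram, and between pairs of new rays, have filtration degree \(\geq k+2\); hence modulo \(\mathfrak m_R^{k+2}\) the only effect of the new rays on \(\theta_{\gamma_p, \mathfrak D_{k+1}}\) is to add \(-\psi_p\), so \(\theta_{\gamma_p, \mathfrak D_{k+1}} \equiv \mathrm{id} \pmod{\mathfrak m_R^{k+2}}\), while the behaviour at the other (old) singularities and at all lower orders is untouched, and any new singularities involving the new rays contribute only obstructions of degree \(\geq k+2\), handled at the next stage. Since any loop \(\gamma\) decomposes into small loops around the singularities it encloses and conjugation by degree-\(\geq 1\) elements is trivial modulo \(\mathfrak m_R^{k+2}\) on degree-\(\geq k+1\) elements, one gets \(\theta_{\gamma, \mathfrak D_{k+1}} \equiv \mathrm{id} \pmod{\mathfrak m_R^{k+2}}\) for every \(\gamma\), and the induction goes through. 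Finally set \(\mathfrak D_\infty = \mathfrak D \cup \bigcup_{k\geq 0}(\mathfrak D_{k+1}\setminus \mathfrak D_k)\); for each \(k\) only finitely many of its rays carry functions \(\not\equiv 1 \pmod{\mathfrak m_R^k}\), so it is a scattering diagram, it contains \(\mathfrak D\), it differs from \(\mathfrak D\) only in rays, and it is consistent by construction.

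The step I expect to be the main obstacle is the claim underlying the inductive step: that the degree-\((k+1)\) failure of consistency is an essentially \emph{abelian} problem — that \(\psi_p\) splits into one-primitive-direction corrections, each realized by the crossing automorphism of a single ray with a prescribed function and with no interference between them. Making this precise needs the commutator estimate (so Baker--Campbell--Hausdorff corrections are negligible at this order) together with the identification, for a fixed direction, of the functions on a ray modulo \(\mathfrak m_R^{k+2}\) with elements of the corresponding abelian subalgebra having the prescribed lower-order part. One must also pin down the orientation convention so that the corrections are genuine rays (never lines) pointing the correct way, which is cleanest in the local model of two crossing walls after the general-position perturbation. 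Uniqueness up to equivalence, though not asserted in the statement, would follow from the same order-by-order bookkeeping.
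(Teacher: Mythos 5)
Your argument is correct and is essentially the standard proof of this result: the paper itself gives no proof but cites \cite{KS1} and \cite[Theorem 1.4]{GPS}, whose argument is exactly this order-by-order induction in the $\mathfrak m_R$-adic filtration, inserting at each singularity outgoing rays whose attached functions cancel the degree-$(k+1)$ part of $\log\theta_{\gamma_p}$, with BCH corrections pushed to higher order. Your handling of the key points (the graded pro-nilpotent Lie algebra, the identification of single-ray automorphisms with the abelian subalgebra $\prod_{\ell\geq1}\mathfrak g_{\ell m_0}$, and the locality/decomposition of loops) matches that proof, so there is nothing substantive to add.
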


\begin{rem}
\label{rem:unique}
The consistent diagram $\mathfrak{D}_\infty$ obtained from $\mathfrak{D}$ is unique (up to equivalence), if we require that it has no two rays $\mathfrak{d},\mathfrak{d}'$ with the same support $\underline{\mathfrak{d}}=\underline{\mathfrak{d}}'$.
\end{rem}

\begin{dfn}
If a consistent diagram $\mathfrak{D}=\mathfrak{D}_\infty$ has only one singularity, then (by Remark \ref{rem:unique}, up to equivalence) there is a unique ray in each direction $m\in\mathbb{Z}^2$. We write the function of this ray as $f_m^{\mathfrak{D}}$.
\end{dfn}

\begin{dfn}
\label{def:standard}
The \emph{standard scattering diagram} \(\mathfrak D^{\mu, \nu} = \mathfrak D_\infty^{\mu, \nu}\) is the diagram obtained by performing scattering on the initial diagram \[\mathfrak D_0^{\mu, \nu} = \{(\mathbb R (1, 0), (1 + t x)^\mu), (\mathbb R (0, 1), (1 + t y)^\nu)\}. \]
\end{dfn}

The scattering only produces rays in the first quadrant, i.e. with $m_{\mathfrak{d}}=(a,b)\in\mathbb{Z}_{>0}^2$. Consider an equivalent diagram to a standard scattering diagram such that there is a unique ray in each direction (see Remark \ref{rem:unique}). We can express the function $f_{\mathfrak{d}}$ of the ray $\mathfrak{d}$ in direction $(a,b)\in\mathbb{Z}_{>0}^2$ as 
\[f_{(a, b)}^{\mu, \nu} := f_{(a,b)}^{\mathfrak{D}^{\mu,\nu}} = \prod_{k=1}^\infty (1 + t^{ka + kb} x^{ka} y^{kb})^{c_{ka, kb}^{\mu, \nu}}. \]
\begin{dfn}
\label{def:c}
The \emph{coefficients} for \(\mathfrak D^{\mu, \nu}\) are these \(c_{a, b}^{\mu, \nu}\).
\end{dfn}

\begin{prop}[{\cite[Proposition C.13]{GHKK}}]
\label{prop:pos}
The coefficients of a standard scattering diagram are all positive:
\[c_{a, b}^{\mu, \nu} \geq 0.\]
\end{prop}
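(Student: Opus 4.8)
This is \cite[Proposition~C.13]{GHKK} (a close variant also appears in \cite{GPS}), so one could simply cite it; but here is the argument I would give. Build $\mathfrak D^{\mu,\nu}$ from $\mathfrak D_0^{\mu,\nu}$ order by order in $t$. Since a ray in direction $(a,b)$ carries a function $\equiv 1 \bmod t^{a+b}$, this is an induction on $N := a+b$: assume the partial diagram $\mathfrak D^{<N}$ --- the two initial lines together with the rays in all directions $(a,b)$, $a,b\ge 1$, $a+b<N$ --- has been constructed, is consistent modulo $\mathfrak m_R^{N}$, and has all $c_{a,b}^{\mu,\nu}\ge 0$ for $a+b<N$; the base case $N=2$ is just $\mathfrak D_0^{\mu,\nu}$, with no coefficients to check. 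Crossing a small loop around the origin, $\theta_{\gamma,\mathfrak D^{<N}}$ is, modulo $\mathfrak m_R^{N+1}$, the exponential of a Lie element $\sum_{a+b=N} g_{a,b}\,D_{a,b}$, where $D_{a,b}$ is the standard degree-$(a,b)$ generator and each $g_{a,b}\in\mathbb C$ is an iterated commutator expression in the wall-crossings already present. Restoring consistency to order $N$ forces us to append, for each such $(a,b)$, a ray with function $(1+t^{N}x^ay^b)^{c_{a,b}^{\mu,\nu}}$, and comparing leading terms gives $c_{a,b}^{\mu,\nu}=\varepsilon\,g_{a,b}$ for a fixed sign $\varepsilon$. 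So the whole statement reduces to the sign of $g_{a,b}$.

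Establishing that sign is the real content, and I do not see a way to get it from the Lie-theoretic bookkeeping alone, since the wall-crossings $\theta_\mathfrak{d}:z^m\mapsto z^mf_\mathfrak{d}^{\langle m,n_\mathfrak{d}\rangle}$ involve negative exponents $\langle m,n_\mathfrak{d}\rangle$ and so do not preserve positivity of coefficients. Instead I would import the tropical vertex of Gross--Pandharipande--Siebert: perturb the initial data to generic $f_1=\prod_{i=1}^{\mu}(1+ts_ix)$, $f_2=\prod_{j=1}^{\nu}(1+tu_jy)$, for which GPS identify the exponents in the factored form of each outgoing function with weighted counts of tropical plane curves (equivalently, with relative Gromov--Witten invariants of toric surfaces), hence with non-negative numbers. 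These perturbed coefficients depend algebraically on $(s_i,u_j)$ and the standard diagram $\mathfrak D^{\mu,\nu}$ is recovered in the limit $s_i,u_j\to 1$ through positive values; non-negativity is preserved in the limit. This yields $c_{a,b}^{\mu,\nu}\ge 0$.

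The main obstacle is the perturbation--specialization step: one must check that the factored-form exponents of the outgoing functions are genuinely well-defined functions of the parameters $(s_i,u_j)$ that behave continuously as $(s_i,u_j)\to(1,\dots,1)$, and that the scattering diagram of the specialized initial data is the limit of the scattering diagrams of the perturbed data --- this is the ``change of coefficients'' / deformation argument used repeatedly in \cite{GPS} and \cite{GHKK}, and it is where the genuine work lies. Alternatively, one can avoid the tropical detour and follow \cite[Appendix~C]{GHKK} directly, where the sign of $g_{a,b}$ is extracted from the positivity of the (larger) cluster scattering diagram. One further point to handle: for non-primitive $(a,b)=d(a',b')$ the exponent $c_{a,b}$ is not simply the $t^{N}x^ay^b$-coefficient of $\log f_{(a',b')}$ --- it differs from it by multiple-cover contributions that carry signs --- so it is cleaner to track the factored exponents themselves throughout (as GPS do) rather than passing through $\log f$.
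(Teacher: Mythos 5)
The paper gives no proof of this proposition at all --- it is quoted verbatim from \cite[Proposition C.13]{GHKK} --- so your opening observation that one may simply cite it matches the paper's treatment exactly. Your sketch (perturb to generic multi-parameter initial data, identify wall exponents with weighted tropical counts as in \cite{GPS}, then specialize) is in the spirit of the cited proof; the only caveats are that non-negativity should be drawn from the positive tropical multiplicities rather than from the GW invariants themselves (which are merely rational, as the paper notes), and that the specialization is the algebraic substitution $t_{i,j}\mapsto t$ combined with the asymptotic-diagram statement of \cite[\S1.4]{GPS}, so no analytic limit or continuity argument is actually required.
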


\begin{notation}
When talking about standard scattering diagrams $D^{\mu,\nu}$ we will often omit the factors of \(t\) for better readability. We can do this, because all monomials are of the form $t^{a+b}x^ay^b$, i.e. the $t$-power can be read off from the $x$- and $y$-powers.
\end{notation}

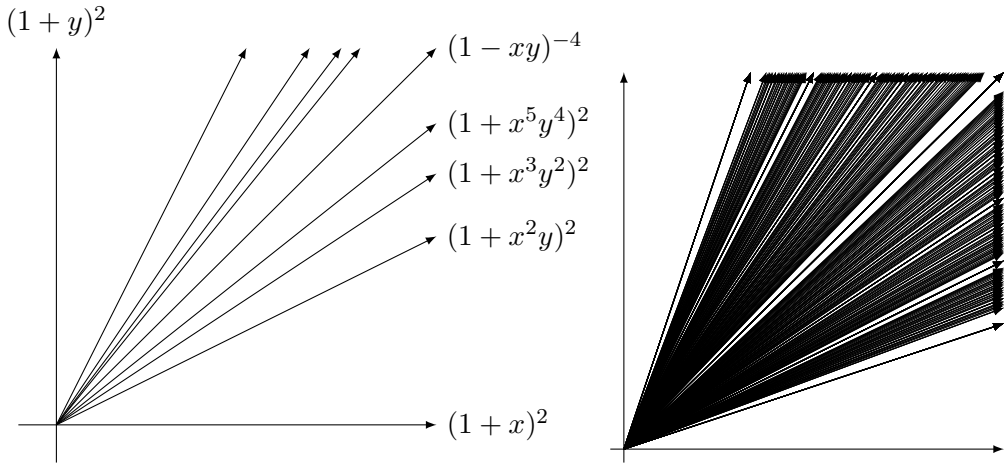
\begin{figure}[h!]
\centering
\begin{tikzpicture}[scale=1]
\draw[->] (0.0, -1/2) -- (0.0, 5.0) node[above]{$(1+y)^2$};
\draw[->] (-1/2, 0.0) -- (5.0, 0.0) node[right]{$(1+x)^2$};
\draw[->] (0.0, 0.0) -- (5.0, 5.0) node[right]{$(1-xy)^{-4}$};
\draw[->] (0.0, 0.0) -- (2.5, 5.0);
\draw[->] (0.0, 0.0) -- (5.0, 2.5) node[right]{$(1+x^2y)^2$};
\draw[->] (0.0, 0.0) -- (3.333, 5.0);
\draw[->] (0.0, 0.0) -- (5.0, 3.333) node[right]{$(1+x^3y^2)^2$};
\draw[->] (0.0, 0.0) -- (3.75, 5.0);
\draw[->] (0.0, 0.0) -- (4.0, 5.0);
\draw[->] (0.0, 0.0) -- (5.0, 4.0) node[right]{$(1+x^5y^4)^2$};
\end{tikzpicture}
\begin{tikzpicture}[scale=5/28]
\draw[->] (0.0, -1.0) -- (0.0, 28.0);
\draw[->] (-1.0, 0.0) -- (28.0, 0.0);
\draw[->] (0.0, 0.0) -- (28.0, 28.0);
\draw[->] (0.0, 0.0) -- (28.0, 28.0);
\draw[->] (0.0, 0.0) -- (28.0, 28.0);
\draw[->] (0.0, 0.0) -- (28.0, 28.0);
\draw[->] (0.0, 0.0) -- (28.0, 28.0);
\draw[->] (0.0, 0.0) -- (28.0, 28.0);
\draw[->] (0.0, 0.0) -- (28.0, 28.0);
\draw[->] (0.0, 0.0) -- (28.0, 28.0);
\draw[->] (0.0, 0.0) -- (28.0, 28.0);
\draw[->] (0.0, 0.0) -- (28.0, 28.0);
\draw[->] (0.0, 0.0) -- (28.0, 28.0);
\draw[->] (0.0, 0.0) -- (28.0, 28.0);
\draw[->] (0.0, 0.0) -- (28.0, 28.0);
\draw[->] (0.0, 0.0) -- (28.0, 28.0);
\draw[->] (0.0, 0.0) -- (28.0, 28.0);
\draw[->] (0.0, 0.0) -- (28.0, 28.0);
\draw[->] (0.0, 0.0) -- (28.0, 28.0);
\draw[->] (0.0, 0.0) -- (28.0, 28.0);
\draw[->] (0.0, 0.0) -- (28.0, 28.0);
\draw[->] (0.0, 0.0) -- (28.0, 28.0);
\draw[->] (0.0, 0.0) -- (14.0, 28.0);
\draw[->] (0.0, 0.0) -- (14.0, 28.0);
\draw[->] (0.0, 0.0) -- (14.0, 28.0);
\draw[->] (0.0, 0.0) -- (14.0, 28.0);
\draw[->] (0.0, 0.0) -- (14.0, 28.0);
\draw[->] (0.0, 0.0) -- (14.0, 28.0);
\draw[->] (0.0, 0.0) -- (14.0, 28.0);
\draw[->] (0.0, 0.0) -- (14.0, 28.0);
\draw[->] (0.0, 0.0) -- (14.0, 28.0);
\draw[->] (0.0, 0.0) -- (14.0, 28.0);
\draw[->] (0.0, 0.0) -- (14.0, 28.0);
\draw[->] (0.0, 0.0) -- (14.0, 28.0);
\draw[->] (0.0, 0.0) -- (14.0, 28.0);
\draw[->] (0.0, 0.0) -- (9.333, 28.0);
\draw[->] (0.0, 0.0) -- (9.333, 28.0);
\draw[->] (0.0, 0.0) -- (9.333, 28.0);
\draw[->] (0.0, 0.0) -- (9.333, 28.0);
\draw[->] (0.0, 0.0) -- (9.333, 28.0);
\draw[->] (0.0, 0.0) -- (9.333, 28.0);
\draw[->] (0.0, 0.0) -- (9.333, 28.0);
\draw[->] (0.0, 0.0) -- (9.333, 28.0);
\draw[->] (0.0, 0.0) -- (9.333, 28.0);
\draw[->] (0.0, 0.0) -- (9.333, 28.0);
\draw[->] (0.0, 0.0) -- (28.0, 14.0);
\draw[->] (0.0, 0.0) -- (28.0, 14.0);
\draw[->] (0.0, 0.0) -- (28.0, 14.0);
\draw[->] (0.0, 0.0) -- (28.0, 14.0);
\draw[->] (0.0, 0.0) -- (28.0, 14.0);
\draw[->] (0.0, 0.0) -- (28.0, 14.0);
\draw[->] (0.0, 0.0) -- (28.0, 14.0);
\draw[->] (0.0, 0.0) -- (28.0, 14.0);
\draw[->] (0.0, 0.0) -- (28.0, 14.0);
\draw[->] (0.0, 0.0) -- (28.0, 14.0);
\draw[->] (0.0, 0.0) -- (28.0, 14.0);
\draw[->] (0.0, 0.0) -- (28.0, 14.0);
\draw[->] (0.0, 0.0) -- (28.0, 14.0);
\draw[->] (0.0, 0.0) -- (18.667, 28.0);
\draw[->] (0.0, 0.0) -- (18.667, 28.0);
\draw[->] (0.0, 0.0) -- (18.667, 28.0);
\draw[->] (0.0, 0.0) -- (18.667, 28.0);
\draw[->] (0.0, 0.0) -- (18.667, 28.0);
\draw[->] (0.0, 0.0) -- (18.667, 28.0);
\draw[->] (0.0, 0.0) -- (18.667, 28.0);
\draw[->] (0.0, 0.0) -- (18.667, 28.0);
\draw[->] (0.0, 0.0) -- (11.2, 28.0);
\draw[->] (0.0, 0.0) -- (11.2, 28.0);
\draw[->] (0.0, 0.0) -- (11.2, 28.0);
\draw[->] (0.0, 0.0) -- (11.2, 28.0);
\draw[->] (0.0, 0.0) -- (11.2, 28.0);
\draw[->] (0.0, 0.0) -- (28.0, 9.333);
\draw[->] (0.0, 0.0) -- (28.0, 9.333);
\draw[->] (0.0, 0.0) -- (28.0, 9.333);
\draw[->] (0.0, 0.0) -- (28.0, 9.333);
\draw[->] (0.0, 0.0) -- (28.0, 9.333);
\draw[->] (0.0, 0.0) -- (28.0, 9.333);
\draw[->] (0.0, 0.0) -- (28.0, 9.333);
\draw[->] (0.0, 0.0) -- (28.0, 9.333);
\draw[->] (0.0, 0.0) -- (28.0, 9.333);
\draw[->] (0.0, 0.0) -- (28.0, 9.333);
\draw[->] (0.0, 0.0) -- (28.0, 18.667);
\draw[->] (0.0, 0.0) -- (28.0, 18.667);
\draw[->] (0.0, 0.0) -- (28.0, 18.667);
\draw[->] (0.0, 0.0) -- (28.0, 18.667);
\draw[->] (0.0, 0.0) -- (28.0, 18.667);
\draw[->] (0.0, 0.0) -- (28.0, 18.667);
\draw[->] (0.0, 0.0) -- (28.0, 18.667);
\draw[->] (0.0, 0.0) -- (28.0, 18.667);
\draw[->] (0.0, 0.0) -- (21.0, 28.0);
\draw[->] (0.0, 0.0) -- (21.0, 28.0);
\draw[->] (0.0, 0.0) -- (21.0, 28.0);
\draw[->] (0.0, 0.0) -- (21.0, 28.0);
\draw[->] (0.0, 0.0) -- (21.0, 28.0);
\draw[->] (0.0, 0.0) -- (16.8, 28.0);
\draw[->] (0.0, 0.0) -- (16.8, 28.0);
\draw[->] (0.0, 0.0) -- (16.8, 28.0);
\draw[->] (0.0, 0.0) -- (16.8, 28.0);
\draw[->] (0.0, 0.0) -- (16.8, 28.0);
\draw[->] (0.0, 0.0) -- (12.0, 28.0);
\draw[->] (0.0, 0.0) -- (12.0, 28.0);
\draw[->] (0.0, 0.0) -- (12.0, 28.0);
\draw[->] (0.0, 0.0) -- (12.0, 28.0);
\draw[->] (0.0, 0.0) -- (10.5, 28.0);
\draw[->] (0.0, 0.0) -- (10.5, 28.0);
\draw[->] (0.0, 0.0) -- (10.5, 28.0);
\draw[->] (0.0, 0.0) -- (28.0, 21.0);
\draw[->] (0.0, 0.0) -- (28.0, 21.0);
\draw[->] (0.0, 0.0) -- (28.0, 21.0);
\draw[->] (0.0, 0.0) -- (28.0, 21.0);
\draw[->] (0.0, 0.0) -- (28.0, 21.0);
\draw[->] (0.0, 0.0) -- (22.4, 28.0);
\draw[->] (0.0, 0.0) -- (22.4, 28.0);
\draw[->] (0.0, 0.0) -- (22.4, 28.0);
\draw[->] (0.0, 0.0) -- (22.4, 28.0);
\draw[->] (0.0, 0.0) -- (16.0, 28.0);
\draw[->] (0.0, 0.0) -- (16.0, 28.0);
\draw[->] (0.0, 0.0) -- (16.0, 28.0);
\draw[->] (0.0, 0.0) -- (12.444, 28.0);
\draw[->] (0.0, 0.0) -- (12.444, 28.0);
\draw[->] (0.0, 0.0) -- (12.444, 28.0);
\draw[->] (0.0, 0.0) -- (28.0, 11.2);
\draw[->] (0.0, 0.0) -- (28.0, 11.2);
\draw[->] (0.0, 0.0) -- (28.0, 11.2);
\draw[->] (0.0, 0.0) -- (28.0, 11.2);
\draw[->] (0.0, 0.0) -- (28.0, 11.2);
\draw[->] (0.0, 0.0) -- (28.0, 16.8);
\draw[->] (0.0, 0.0) -- (28.0, 16.8);
\draw[->] (0.0, 0.0) -- (28.0, 16.8);
\draw[->] (0.0, 0.0) -- (28.0, 16.8);
\draw[->] (0.0, 0.0) -- (28.0, 16.8);
\draw[->] (0.0, 0.0) -- (28.0, 22.4);
\draw[->] (0.0, 0.0) -- (28.0, 22.4);
\draw[->] (0.0, 0.0) -- (28.0, 22.4);
\draw[->] (0.0, 0.0) -- (28.0, 22.4);
\draw[->] (0.0, 0.0) -- (23.333, 28.0);
\draw[->] (0.0, 0.0) -- (23.333, 28.0);
\draw[->] (0.0, 0.0) -- (23.333, 28.0);
\draw[->] (0.0, 0.0) -- (20.0, 28.0);
\draw[->] (0.0, 0.0) -- (20.0, 28.0);
\draw[->] (0.0, 0.0) -- (20.0, 28.0);
\draw[->] (0.0, 0.0) -- (17.5, 28.0);
\draw[->] (0.0, 0.0) -- (17.5, 28.0);
\draw[->] (0.0, 0.0) -- (17.5, 28.0);
\draw[->] (0.0, 0.0) -- (15.556, 28.0);
\draw[->] (0.0, 0.0) -- (15.556, 28.0);
\draw[->] (0.0, 0.0) -- (12.727, 28.0);
\draw[->] (0.0, 0.0) -- (12.727, 28.0);
\draw[->] (0.0, 0.0) -- (11.667, 28.0);
\draw[->] (0.0, 0.0) -- (11.667, 28.0);
\draw[->] (0.0, 0.0) -- (10.769, 28.0);
\draw[->] (0.0, 0.0) -- (10.769, 28.0);
\draw[->] (0.0, 0.0) -- (28.0, 23.333);
\draw[->] (0.0, 0.0) -- (28.0, 23.333);
\draw[->] (0.0, 0.0) -- (28.0, 23.333);
\draw[->] (0.0, 0.0) -- (24.0, 28.0);
\draw[->] (0.0, 0.0) -- (24.0, 28.0);
\draw[->] (0.0, 0.0) -- (24.0, 28.0);
\draw[->] (0.0, 0.0) -- (15.273, 28.0);
\draw[->] (0.0, 0.0) -- (15.273, 28.0);
\draw[->] (0.0, 0.0) -- (12.923, 28.0);
\draw[->] (0.0, 0.0) -- (12.923, 28.0);
\draw[->] (0.0, 0.0) -- (28.0, 12.0);
\draw[->] (0.0, 0.0) -- (28.0, 12.0);
\draw[->] (0.0, 0.0) -- (28.0, 12.0);
\draw[->] (0.0, 0.0) -- (28.0, 12.0);
\draw[->] (0.0, 0.0) -- (28.0, 16.0);
\draw[->] (0.0, 0.0) -- (28.0, 16.0);
\draw[->] (0.0, 0.0) -- (28.0, 16.0);
\draw[->] (0.0, 0.0) -- (28.0, 20.0);
\draw[->] (0.0, 0.0) -- (28.0, 20.0);
\draw[->] (0.0, 0.0) -- (28.0, 20.0);
\draw[->] (0.0, 0.0) -- (28.0, 24.0);
\draw[->] (0.0, 0.0) -- (28.0, 24.0);
\draw[->] (0.0, 0.0) -- (28.0, 24.0);
\draw[->] (0.0, 0.0) -- (24.5, 28.0);
\draw[->] (0.0, 0.0) -- (24.5, 28.0);
\draw[->] (0.0, 0.0) -- (21.778, 28.0);
\draw[->] (0.0, 0.0) -- (21.778, 28.0);
\draw[->] (0.0, 0.0) -- (19.6, 28.0);
\draw[->] (0.0, 0.0) -- (19.6, 28.0);
\draw[->] (0.0, 0.0) -- (17.818, 28.0);
\draw[->] (0.0, 0.0) -- (17.818, 28.0);
\draw[->] (0.0, 0.0) -- (16.333, 28.0);
\draw[->] (0.0, 0.0) -- (16.333, 28.0);
\draw[->] (0.0, 0.0) -- (15.077, 28.0);
\draw[->] (0.0, 0.0) -- (15.077, 28.0);
\draw[->] (0.0, 0.0) -- (13.067, 28.0);
\draw[->] (0.0, 0.0) -- (12.25, 28.0);
\draw[->] (0.0, 0.0) -- (11.529, 28.0);
\draw[->] (0.0, 0.0) -- (10.889, 28.0);
\draw[->] (0.0, 0.0) -- (28.0, 10.5);
\draw[->] (0.0, 0.0) -- (28.0, 10.5);
\draw[->] (0.0, 0.0) -- (28.0, 10.5);
\draw[->] (0.0, 0.0) -- (28.0, 17.5);
\draw[->] (0.0, 0.0) -- (28.0, 17.5);
\draw[->] (0.0, 0.0) -- (28.0, 17.5);
\draw[->] (0.0, 0.0) -- (28.0, 24.5);
\draw[->] (0.0, 0.0) -- (28.0, 24.5);
\draw[->] (0.0, 0.0) -- (24.889, 28.0);
\draw[->] (0.0, 0.0) -- (24.889, 28.0);
\draw[->] (0.0, 0.0) -- (20.364, 28.0);
\draw[->] (0.0, 0.0) -- (20.364, 28.0);
\draw[->] (0.0, 0.0) -- (17.231, 28.0);
\draw[->] (0.0, 0.0) -- (14.933, 28.0);
\draw[->] (0.0, 0.0) -- (13.176, 28.0);
\draw[->] (0.0, 0.0) -- (11.789, 28.0);
\draw[->] (0.0, 0.0) -- (10.667, 28.0);
\draw[->] (0.0, 0.0) -- (28.0, 12.444);
\draw[->] (0.0, 0.0) -- (28.0, 12.444);
\draw[->] (0.0, 0.0) -- (28.0, 12.444);
\draw[->] (0.0, 0.0) -- (28.0, 15.556);
\draw[->] (0.0, 0.0) -- (28.0, 15.556);
\draw[->] (0.0, 0.0) -- (28.0, 21.778);
\draw[->] (0.0, 0.0) -- (28.0, 21.778);
\draw[->] (0.0, 0.0) -- (28.0, 24.889);
\draw[->] (0.0, 0.0) -- (28.0, 24.889);
\draw[->] (0.0, 0.0) -- (25.2, 28.0);
\draw[->] (0.0, 0.0) -- (25.2, 28.0);
\draw[->] (0.0, 0.0) -- (22.909, 28.0);
\draw[->] (0.0, 0.0) -- (22.909, 28.0);
\draw[->] (0.0, 0.0) -- (19.385, 28.0);
\draw[->] (0.0, 0.0) -- (18.0, 28.0);
\draw[->] (0.0, 0.0) -- (15.75, 28.0);
\draw[->] (0.0, 0.0) -- (14.824, 28.0);
\draw[->] (0.0, 0.0) -- (13.263, 28.0);
\draw[->] (0.0, 0.0) -- (12.6, 28.0);
\draw[->] (0.0, 0.0) -- (11.455, 28.0);
\draw[->] (0.0, 0.0) -- (10.957, 28.0);
\draw[->] (0.0, 0.0) -- (28.0, 19.6);
\draw[->] (0.0, 0.0) -- (28.0, 19.6);
\draw[->] (0.0, 0.0) -- (28.0, 25.2);
\draw[->] (0.0, 0.0) -- (28.0, 25.2);
\draw[->] (0.0, 0.0) -- (25.455, 28.0);
\draw[->] (0.0, 0.0) -- (21.538, 28.0);
\draw[->] (0.0, 0.0) -- (16.471, 28.0);
\draw[->] (0.0, 0.0) -- (14.737, 28.0);
\draw[->] (0.0, 0.0) -- (13.333, 28.0);
\draw[->] (0.0, 0.0) -- (12.174, 28.0);
\draw[->] (0.0, 0.0) -- (28.0, 12.727);
\draw[->] (0.0, 0.0) -- (28.0, 12.727);
\draw[->] (0.0, 0.0) -- (28.0, 15.273);
\draw[->] (0.0, 0.0) -- (28.0, 15.273);
\draw[->] (0.0, 0.0) -- (28.0, 17.818);
\draw[->] (0.0, 0.0) -- (28.0, 17.818);
\draw[->] (0.0, 0.0) -- (28.0, 20.364);
\draw[->] (0.0, 0.0) -- (28.0, 20.364);
\draw[->] (0.0, 0.0) -- (28.0, 22.909);
\draw[->] (0.0, 0.0) -- (28.0, 22.909);
\draw[->] (0.0, 0.0) -- (28.0, 25.455);
\draw[->] (0.0, 0.0) -- (25.667, 28.0);
\draw[->] (0.0, 0.0) -- (23.692, 28.0);
\draw[->] (0.0, 0.0) -- (22.0, 28.0);
\draw[->] (0.0, 0.0) -- (20.533, 28.0);
\draw[->] (0.0, 0.0) -- (19.25, 28.0);
\draw[->] (0.0, 0.0) -- (18.118, 28.0);
\draw[->] (0.0, 0.0) -- (17.111, 28.0);
\draw[->] (0.0, 0.0) -- (16.211, 28.0);
\draw[->] (0.0, 0.0) -- (15.4, 28.0);
\draw[->] (0.0, 0.0) -- (14.667, 28.0);
\draw[->] (0.0, 0.0) -- (13.391, 28.0);
\draw[->] (0.0, 0.0) -- (12.833, 28.0);
\draw[->] (0.0, 0.0) -- (12.32, 28.0);
\draw[->] (0.0, 0.0) -- (11.846, 28.0);
\draw[->] (0.0, 0.0) -- (11.407, 28.0);
\draw[->] (0.0, 0.0) -- (11.0, 28.0);
\draw[->] (0.0, 0.0) -- (28.0, 11.667);
\draw[->] (0.0, 0.0) -- (28.0, 11.667);
\draw[->] (0.0, 0.0) -- (28.0, 16.333);
\draw[->] (0.0, 0.0) -- (28.0, 16.333);
\draw[->] (0.0, 0.0) -- (28.0, 25.667);
\draw[->] (0.0, 0.0) -- (25.846, 28.0);
\draw[->] (0.0, 0.0) -- (19.765, 28.0);
\draw[->] (0.0, 0.0) -- (17.684, 28.0);
\draw[->] (0.0, 0.0) -- (14.609, 28.0);
\draw[->] (0.0, 0.0) -- (13.44, 28.0);
\draw[->] (0.0, 0.0) -- (28.0, 10.769);
\draw[->] (0.0, 0.0) -- (28.0, 10.769);
\draw[->] (0.0, 0.0) -- (28.0, 12.923);
\draw[->] (0.0, 0.0) -- (28.0, 12.923);
\draw[->] (0.0, 0.0) -- (28.0, 15.077);
\draw[->] (0.0, 0.0) -- (28.0, 15.077);
\draw[->] (0.0, 0.0) -- (28.0, 17.231);
\draw[->] (0.0, 0.0) -- (28.0, 19.385);
\draw[->] (0.0, 0.0) -- (28.0, 21.538);
\draw[->] (0.0, 0.0) -- (28.0, 23.692);
\draw[->] (0.0, 0.0) -- (28.0, 25.846);
\draw[->] (0.0, 0.0) -- (26.0, 28.0);
\draw[->] (0.0, 0.0) -- (24.267, 28.0);
\draw[->] (0.0, 0.0) -- (22.75, 28.0);
\draw[->] (0.0, 0.0) -- (21.412, 28.0);
\draw[->] (0.0, 0.0) -- (20.222, 28.0);
\draw[->] (0.0, 0.0) -- (19.158, 28.0);
\draw[->] (0.0, 0.0) -- (18.2, 28.0);
\draw[->] (0.0, 0.0) -- (17.333, 28.0);
\draw[->] (0.0, 0.0) -- (16.545, 28.0);
\draw[->] (0.0, 0.0) -- (15.826, 28.0);
\draw[->] (0.0, 0.0) -- (15.167, 28.0);
\draw[->] (0.0, 0.0) -- (14.56, 28.0);
\draw[->] (0.0, 0.0) -- (13.481, 28.0);
\draw[->] (0.0, 0.0) -- (28.0, 18.0);
\draw[->] (0.0, 0.0) -- (28.0, 22.0);
\draw[->] (0.0, 0.0) -- (28.0, 26.0);
\draw[->] (0.0, 0.0) -- (26.133, 28.0);
\draw[->] (0.0, 0.0) -- (23.059, 28.0);
\draw[->] (0.0, 0.0) -- (20.632, 28.0);
\draw[->] (0.0, 0.0) -- (17.043, 28.0);
\draw[->] (0.0, 0.0) -- (15.68, 28.0);
\draw[->] (0.0, 0.0) -- (28.0, 13.067);
\draw[->] (0.0, 0.0) -- (28.0, 14.933);
\draw[->] (0.0, 0.0) -- (28.0, 20.533);
\draw[->] (0.0, 0.0) -- (28.0, 24.267);
\draw[->] (0.0, 0.0) -- (28.0, 26.133);
\draw[->] (0.0, 0.0) -- (26.25, 28.0);
\draw[->] (0.0, 0.0) -- (24.706, 28.0);
\draw[->] (0.0, 0.0) -- (22.105, 28.0);
\draw[->] (0.0, 0.0) -- (19.091, 28.0);
\draw[->] (0.0, 0.0) -- (18.261, 28.0);
\draw[->] (0.0, 0.0) -- (28.0, 12.25);
\draw[->] (0.0, 0.0) -- (28.0, 15.75);
\draw[->] (0.0, 0.0) -- (28.0, 19.25);
\draw[->] (0.0, 0.0) -- (28.0, 22.75);
\draw[->] (0.0, 0.0) -- (28.0, 26.25);
\draw[->] (0.0, 0.0) -- (26.353, 28.0);
\draw[->] (0.0, 0.0) -- (23.579, 28.0);
\draw[->] (0.0, 0.0) -- (21.333, 28.0);
\draw[->] (0.0, 0.0) -- (19.478, 28.0);
\draw[->] (0.0, 0.0) -- (28.0, 11.529);
\draw[->] (0.0, 0.0) -- (28.0, 13.176);
\draw[->] (0.0, 0.0) -- (28.0, 14.824);
\draw[->] (0.0, 0.0) -- (28.0, 16.471);
\draw[->] (0.0, 0.0) -- (28.0, 18.118);
\draw[->] (0.0, 0.0) -- (28.0, 19.765);
\draw[->] (0.0, 0.0) -- (28.0, 21.412);
\draw[->] (0.0, 0.0) -- (28.0, 23.059);
\draw[->] (0.0, 0.0) -- (28.0, 24.706);
\draw[->] (0.0, 0.0) -- (28.0, 26.353);
\draw[->] (0.0, 0.0) -- (26.444, 28.0);
\draw[->] (0.0, 0.0) -- (25.053, 28.0);
\draw[->] (0.0, 0.0) -- (23.8, 28.0);
\draw[->] (0.0, 0.0) -- (22.667, 28.0);
\draw[->] (0.0, 0.0) -- (21.636, 28.0);
\draw[->] (0.0, 0.0) -- (20.696, 28.0);
\draw[->] (0.0, 0.0) -- (28.0, 10.889);
\draw[->] (0.0, 0.0) -- (28.0, 17.111);
\draw[->] (0.0, 0.0) -- (28.0, 20.222);
\draw[->] (0.0, 0.0) -- (28.0, 26.444);
\draw[->] (0.0, 0.0) -- (26.526, 28.0);
\draw[->] (0.0, 0.0) -- (28.0, 11.789);
\draw[->] (0.0, 0.0) -- (28.0, 13.263);
\draw[->] (0.0, 0.0) -- (28.0, 14.737);
\draw[->] (0.0, 0.0) -- (28.0, 16.211);
\draw[->] (0.0, 0.0) -- (28.0, 17.684);
\draw[->] (0.0, 0.0) -- (28.0, 19.158);
\draw[->] (0.0, 0.0) -- (28.0, 20.632);
\draw[->] (0.0, 0.0) -- (28.0, 22.105);
\draw[->] (0.0, 0.0) -- (28.0, 23.579);
\draw[->] (0.0, 0.0) -- (28.0, 25.053);
\draw[->] (0.0, 0.0) -- (28.0, 26.526);
\draw[->] (0.0, 0.0) -- (26.6, 28.0);
\draw[->] (0.0, 0.0) -- (25.333, 28.0);
\draw[->] (0.0, 0.0) -- (28.0, 12.6);
\draw[->] (0.0, 0.0) -- (28.0, 15.4);
\draw[->] (0.0, 0.0) -- (28.0, 18.2);
\draw[->] (0.0, 0.0) -- (28.0, 23.8);
\draw[->] (0.0, 0.0) -- (28.0, 26.6);
\draw[->] (0.0, 0.0) -- (28.0, 10.667);
\draw[->] (0.0, 0.0) -- (28.0, 13.333);
\draw[->] (0.0, 0.0) -- (28.0, 14.667);
\draw[->] (0.0, 0.0) -- (28.0, 17.333);
\draw[->] (0.0, 0.0) -- (28.0, 21.333);
\draw[->] (0.0, 0.0) -- (28.0, 22.667);
\draw[->] (0.0, 0.0) -- (28.0, 25.333);
\draw[->] (0.0, 0.0) -- (28.0, 11.455);
\draw[->] (0.0, 0.0) -- (28.0, 16.545);
\draw[->] (0.0, 0.0) -- (28.0, 19.091);
\draw[->] (0.0, 0.0) -- (28.0, 21.636);
\draw[->] (0.0, 0.0) -- (28.0, 10.957);
\draw[->] (0.0, 0.0) -- (28.0, 12.174);
\draw[->] (0.0, 0.0) -- (28.0, 13.391);
\draw[->] (0.0, 0.0) -- (28.0, 14.609);
\draw[->] (0.0, 0.0) -- (28.0, 15.826);
\draw[->] (0.0, 0.0) -- (28.0, 17.043);
\draw[->] (0.0, 0.0) -- (28.0, 18.261);
\draw[->] (0.0, 0.0) -- (28.0, 19.478);
\draw[->] (0.0, 0.0) -- (28.0, 20.696);
\draw[->] (0.0, 0.0) -- (28.0, 12.833);
\draw[->] (0.0, 0.0) -- (28.0, 15.167);
\draw[->] (0.0, 0.0) -- (28.0, 12.32);
\draw[->] (0.0, 0.0) -- (28.0, 13.44);
\draw[->] (0.0, 0.0) -- (28.0, 14.56);
\draw[->] (0.0, 0.0) -- (28.0, 15.68);
\draw[->] (0.0, 0.0) -- (28.0, 11.846);
\draw[->] (0.0, 0.0) -- (28.0, 11.407);
\draw[->] (0.0, 0.0) -- (28.0, 13.481);
\draw[->] (0.0, 0.0) -- (28.0, 11.0);
\end{tikzpicture}
\caption{The standard scattering diagrams $\mathfrak{D}^{2,2}$ and $\mathfrak{D}^{3,3}$.}
\label{fig:stand}
\end{figure}

\begin{ex}
\label{expl:scat}
Figure \ref{fig:stand} shows the standard scattering diagrams $\mathfrak{D}^{2,2}$, to $t$-order $10$, and $\mathfrak{D}^{3,3}$, to $t$-order $40$. We omitted $t$ and only show some of the functions $f_{\mathfrak{d}}$. 

Note that $\mathfrak{D}^{2,2}$ has only rays in directions $(1,1)$, $(n,n+1)$ and $(n+1,n)$ for $n\in\mathbb{N}$. The functions $f_{(a,b)}^{2,2}$ are
\[ f_{(1,1)}^{2,2} = (1-xy)^{-4}, \quad f_{(n,n+1)} = (1+x^ny^{n+1})^2, \quad f_{(n+1,n)} = (1+x^{n+1}y^n)^2. \]
Hence, the non-zero coefficients $c_{a,b}^{2,2}$ are
\[ c_{n,n}^{2,2} = \begin{cases} 4, & n=2^k \\ 0, & \text{otherwise} \end{cases}, \quad c_{n,n+1}^{2,2}=2, \quad c_{n+1,n}^{2,2}=2. \]
In particular, the rays are discrete.

For $\mathfrak{D}^{3,3}$ the functions $f_{(a,b)}^{3,3}$ and coefficients $c_{a,b}^{3,3}$ are very complicated and unknown in general. Only for $f_{(1,1)}^{3,3}$ there is a known formula, see \cite[Example 1.6]{GPS}, \cite[\S1.4]{GP}. Note that there seems to be a region that is densely filled with rays. This is exactly the statement of Theorem \ref{thm:main}.
\end{ex}

\subsection{The change of lattice trick}

There is a useful way to reduce to only needing to consider standard diagrams, found in \cite[Proof of Proposition C.13, Step IV]{GHKK}.

\begin{prop}
\label{prop:col}
Let $\mathfrak{D}$ be the consistent diagram obtained from the scattering diagram consisting of two lines $\mathfrak{d}_1$ and $\mathfrak{d}_2$ with functions $f_1=(1+tz^{m_1})^{d_1}$ and $f_2=(1+tz^{m_2})^{d_2}$. Let $M'\subseteq M$ be the sublattice generated by $m_1$ and $m_2$ and let $N' \supseteq N$ be the dual lattice. If $m\in M\setminus M'$ then $f_m=1$. Otherwise write $m=am_1+bm_2$. Then 
\[ f_m^{\mathfrak{D}} = \left(f_{(a,b)}^{d_1 e(m_2^*), d_2 e(m_1^*)}\right)^{1/e(n)} \]
where $n \in N'$ is orthogonal to $m\in M'$ and primitive, and for any $n' \in N'$ we defined
\[ e(n'):=\text{min}\{k \in \mathbb{N} \mid kn' \in N\} \] 
In particular, the scattering of any scattering diagram consisting of two lines can be computed from a standard scattering diagram.
\end{prop}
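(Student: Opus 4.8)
\emph{Setup.} Throughout I assume $m_1,m_2$ are linearly independent (otherwise the two lines do not interact and the claim is vacuous or trivial), and I translate so that $\mathfrak{d}_1\cap\mathfrak{d}_2=\{0\}$. Then $\mathfrak{D}_0:=\{\mathfrak{d}_1,\mathfrak{d}_2\}$ has a single singularity at $0$, hence so does its consistent completion $\mathfrak{D}$ (new rays are added emanating from $0$ and distinct rays from $0$ meet only at $0$), so the one-ray-per-direction representative and the notation $f_m^{\mathfrak{D}}$ make sense by Remark~\ref{rem:unique}. Let $m_1^*,m_2^*\in N'$ be the basis dual to $m_1,m_2$. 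The idea is to compare scattering in the ambient lattice $M$ with scattering in $M'$, tracking how the wall-crossing automorphisms change.

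\emph{Step 1: everything lives over $M'$.} I would first observe that every monomial occurring in any wall function of $\mathfrak{D}$ has the form $t^{\alpha+\beta}z^{\alpha m_1+\beta m_2}$ with $(\alpha,\beta)\in\mathbb{Z}_{\geq0}^2\setminus\{0\}$: this holds for $\mathfrak{D}_0$ and is preserved by the order-by-order construction of \cite[Theorem 1.4]{GPS}, since wall-crossings multiply by powers of functions already of this shape and a ray added to kill a defect has a function whose monomials are prescribed by that defect. In particular every ray of $\mathfrak{D}$ has direction in $\sigma:=\mathbb{R}_{\geq0}m_1+\mathbb{R}_{\geq0}m_2$ and every $f_{\mathfrak{d}}\in\mathbb{C}[M']\widehat{\otimes}R$; hence $f_m^{\mathfrak{D}}=1$ for $m\in M\setminus M'$ (indeed for all $m\notin\sigma$).

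\emph{Step 2: transfer of consistency.} For a ray $\mathfrak{d}$ of $\mathfrak{D}$ let $n_{\mathfrak{d}}\in N'$ be primitive orthogonal to its direction. Since $N\cap\mathbb{R}n_{\mathfrak{d}}=\mathbb{Z}\cdot e(n_{\mathfrak{d}})n_{\mathfrak{d}}$ by definition of $e$, the primitive element of $N$ on that ray is $e(n_{\mathfrak{d}})n_{\mathfrak{d}}$, so on the invariant subalgebra $\mathbb{C}[M']\widehat{\otimes}R$ the $M$-wall-crossing $z^{m''}\mapsto z^{m''}f_{\mathfrak{d}}^{\langle m'',\,e(n_{\mathfrak{d}})n_{\mathfrak{d}}\rangle}$ equals the $M'$-wall-crossing attached to the function $f_{\mathfrak{d}}^{e(n_{\mathfrak{d}})}$. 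Because all functions of $\mathfrak{D}$ lie in $\mathbb{C}[M']$ and the group of units $\equiv1$ is torsion-free (a hom $M\to$ this group killing the finite-index subgroup $M'$ is trivial), $\theta_{\gamma,\mathfrak{D}}=\mathrm{id}$ on $\mathbb{C}[M]\widehat{\otimes}R$ iff it is the identity on the subalgebra. Therefore $\mathfrak{D}$ is consistent over $M$ iff the diagram $\widetilde{\mathfrak{D}}$ with the same supports and functions $f_{\mathfrak{d}}\mapsto f_{\mathfrak{d}}^{e(n_{\mathfrak{d}})}$ is consistent over $M'$. Both rays of a line get the same exponent, so $\widetilde{\mathfrak{D}_0}\subseteq\widetilde{\mathfrak{D}}$ is again a pair of lines with $\widetilde{\mathfrak{D}}\setminus\widetilde{\mathfrak{D}_0}$ rays, and by Remark~\ref{rem:unique} $\widetilde{\mathfrak{D}}$ is the consistent completion of $\widetilde{\mathfrak{D}_0}$ over $M'$.

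\emph{Step 3: identification with a standard diagram, and conclusion.} The primitive normal in $N'$ to $\mathbb{R}m_1$ is $\pm m_2^*$ and to $\mathbb{R}m_2$ is $\pm m_1^*$ (from $\langle m_i,m_j^*\rangle=\delta_{ij}$ and primitivity of dual basis vectors), so $\widetilde{\mathfrak{d}_1}$ carries $(1+tz^{m_1})^{d_1 e(m_2^*)}$ and $\widetilde{\mathfrak{d}_2}$ carries $(1+tz^{m_2})^{d_2 e(m_1^*)}$. The lattice isomorphism $M'\to\mathbb{Z}^2$ sending $m_1\mapsto e_1$, $m_2\mapsto e_2$ takes $z^{m_1},z^{m_2}$ to $x,y$ and (by Step 1) matches the $t$-adic filtrations on the nose, since on both sides the $t$-degree of $z^{\alpha m_1+\beta m_2}$ resp.\ $x^\alpha y^\beta$ is $\alpha+\beta$; hence it identifies $\widetilde{\mathfrak{D}_0}$ with $\mathfrak{D}_0^{d_1 e(m_2^*),\,d_2 e(m_1^*)}$ and therefore $\widetilde{\mathfrak{D}}$ with $\mathfrak{D}^{d_1 e(m_2^*),\,d_2 e(m_1^*)}$. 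Reading off the ray in direction $m=am_1+bm_2$ — whose primitive normal in $N'$ is the $n$ of the statement — gives
\[ f_{(a,b)}^{d_1 e(m_2^*),\, d_2 e(m_1^*)} = \bigl(f_m^{\mathfrak{D}}\bigr)^{e(n)}, \]
and taking the unique $e(n)$-th root in the (uniquely divisible) group of units $\equiv1$ yields the formula. The final sentence follows because two non-parallel lines can always be translated to meet at the origin.

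\emph{Main obstacle.} The substantive point is Step 2: checking that raising each wall function to the $e(n_{\mathfrak{d}})$-th power converts $M$-consistency into $M'$-consistency exactly, and in particular that restricting to the subalgebra $\mathbb{C}[M']\widehat{\otimes}R$ loses no information (torsion-freeness). Step 1 is folklore but needs stating; Step 3 is bookkeeping, the only delicate part being the compatibility of the $t$-gradings under the change of lattice.
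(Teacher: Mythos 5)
Your argument is correct and follows essentially the same route as the paper's proof: view the diagram in the sublattice $M'$, use that the primitive normal in $N$ is $e(n_{\mathfrak{d}'})$ times the primitive normal in $N'$ to get $f_{\mathfrak{d}'}=f_{\mathfrak{d}}^{e(n_{\mathfrak{d}'})}$, identify the exponentiated initial lines with $\mathfrak{D}_0^{d_1e(m_2^*),\,d_2e(m_1^*)}$, and conclude by uniqueness of consistent completions (Remark \ref{rem:unique}). The extra points you spell out (all monomials lying in $M'$, torsion-freeness of the units group) are details the paper leaves implicit, not a different method.
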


\begin{proof}
Any ray $\mathfrak{d}$ in $\mathfrak{D}$ has direction vector $m_{\mathfrak{d}}$ contained in $M'\subseteq M$. Hence, we can consider $\mathfrak{d}$, $\mathfrak{D}$ and $\mathfrak{D}_\infty$ in the lattice $M$ or in the lattice $M'$. In the latter case we will write $\mathfrak{d}'$, $\mathfrak{D}'$ and $\mathfrak{D}'_\infty$. By definition the automorphism $\theta_{\mathfrak{d}}\in\text{Aut}_{\mathbb{C}\llbracket t\rrbracket}(\mathbb{C}[M]\widehat{\otimes}_{\mathbb{C}}\mathbb{C}\llbracket t\rrbracket)$ defined by a ray $\mathfrak{d}\in\mathfrak{D}_\infty$ is given by
\[ \theta_{\mathfrak{d}} : z^{m} \mapsto z^{m} f_{\mathfrak{d}}^{\braket{m,n_{\mathfrak{d}}}}. \]
Let $e(n')$ be the defined as above. Then we have $n_{\mathfrak{d}}=e(n_{\mathfrak{d}'})n_{\mathfrak{d}'}$ and the corresponding automorphism $\theta_{\mathfrak{d}'}\in\text{Aut}_{\mathbb{C}\llbracket t\rrbracket}(\mathbb{C}[M']\widehat{\otimes}_{\mathbb{C}}\mathbb{C}\llbracket t\rrbracket)$ defined by $\mathfrak{d}'\in\mathfrak{D}'_\infty$ is given by
\[ \theta_{\mathfrak{d}'} : z^{m'} \mapsto z^{m'}f_{\mathfrak{d}}^{\braket{m',n_{\mathfrak{d}}}} = z^{m'}f_{\mathfrak{d}}^{e(n_{\mathfrak{d}'})\braket{m',n_{\mathfrak{d}'}}} = z^{m'} f_{\mathfrak{d}'}^{\braket{m',n_{\mathfrak{d}'}}}. \]
This shows that $f_{\mathfrak{d}'}=f_{\mathfrak{d}}^{e(n_{\mathfrak{d}'})}$. In particular the initial functions $f_1$ and $f_2$ considered in the lattice $M'$ are $f'_1=(1+tx)^{d_1 e(m_2^*)}$ and $f'_2=1+ty^{d_2 e(m_1^*)}$, where $x=z^{m_1}$ and $y=z^{m_2}$. These are the initial functions of the standard scattering diagram $\mathfrak{D}^{d_1 e(m_2^*), d_2 e(m_1^*)}$. 

We know that scattering gives a consistent diagram $\mathfrak{D}_\infty^{d_1 e(m_2^*), d_2 e(m_1^*)}$. We get a consistent diagram containing $\mathfrak{D}$ by replacing any ray $\mathfrak{d}' \in \mathfrak{D}_\infty^{e(n_1),e(n_2)}$ by $\mathfrak{d}$ with function $f_{\mathfrak{d}}=f_{\mathfrak{d}'}^{1/e(n_{\mathfrak{d}'})}$. By uniqueness of consistent diagrams up to equivalence (Remark \ref{rem:unique}) this completes the proof.
\end{proof}

\begin{ex}
Let $\mathfrak{D}_{\text{exp}}^{\mu,\nu}$ be the scattering diagram consisting of two lines with functions $f_1=1+tx^\mu$ and $f_2=1+ty^\nu$. As above, let $M'$ be the sublattice generated by $m_1=(\mu,0)$ and $m_2=(0,\nu)$. Then $e(m_1^*)=\mu$ and $e(m_2^*)=\nu$, so $\mathfrak{D}_{\text{exp}}^{\mu,\nu}$ is related to $\mathfrak{D}^{\nu,\mu}$. Let $m\in M'$ be primitive in $M'$ with dual $n\in N'$ and write $m = am_1+bm_2$. Then $e(n)=\mu\nu/\gcd(m_{(1)},m_{(2)})$, where $m_{(i)}$ is the $i$-th component of $m\in M$. So we have
\[ f_m^{\mathfrak{D}_{\text{exp}}^{\mu,\nu}} = \left(f_{(a,b)}^{\nu,\mu}\right)^{\gcd(m_{(1)},m_{(2)})/\mu\nu} = \left(f_{(b,a)}^{\mu,\nu}\right)^{\gcd(m_{(1)},m_{(2)})/\mu\nu}. \]
Figure \ref{fig:exp} shows $\mathfrak{D}_{\text{exp}}^{\mu,\nu}$ and $\mathfrak{D}^{\nu,\mu}$ for $(\mu,\nu)=(3,2)$ to order $4$.
\end{ex}

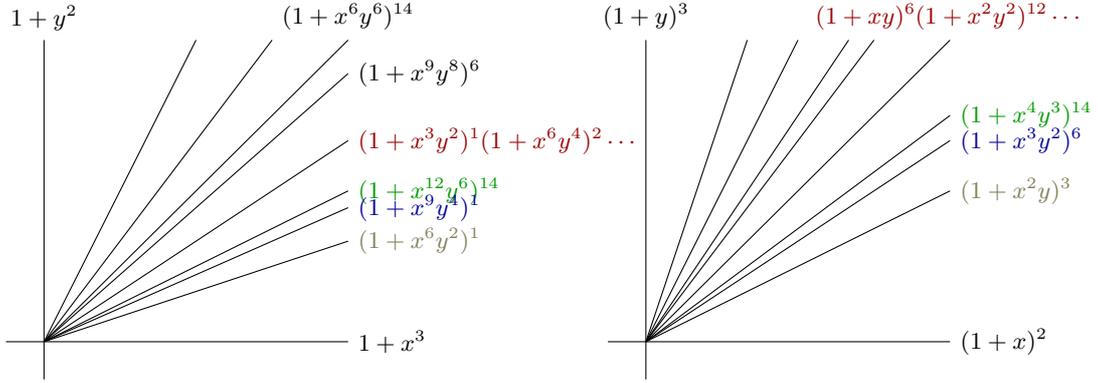
\begin{figure}[h!]
\centering
\makebox[\textwidth]{
\begin{tikzpicture}[scale=1]
\draw[-] (0.0, -1/2) -- (0.0, 4) node[above]{\footnotesize$1+y^2$};
\draw[-] (-1/2, 0.0) -- (4, 0.0) node[right]{\footnotesize$1+x^3$};
\draw[-] (0.0, 0.0) -- (4,4) node[above]{\footnotesize$(1+x^6y^6)^{14}$};
\draw[-] (0.0, 0.0) -- (2, 4);
\draw[-] (0.0, 0.0) -- (4, 2) node[right]{\footnotesize\textcolor{black!40!green}{$(1+x^{12}y^6)^{14}$}};
\draw[-] (0.0, 0.0) -- (4, 4/3) node[right]{\footnotesize\textcolor{black!60!yellow}{$(1+x^6y^2)^1$}};
\draw[-] (0.0, 0.0) -- (4, 8/3) node[right]{\footnotesize\textcolor{black!40!red}{$(1+x^3y^2)^1(1+x^6y^4)^2\cdots$}};
\draw[-] (0.0, 0.0) -- (3, 4);
\draw[-] (0.0, 0.0) -- (4, 16/9) node[right]{\footnotesize\textcolor{black!40!blue}{$(1+x^9y^4)^1$}};
\draw[-] (0.0, 0.0) -- (4, 32/9) node[right]{\footnotesize$(1+x^9y^8)^6$};
\end{tikzpicture}
\hspace{-10mm}
\begin{tikzpicture}[scale=1]
\draw[-] (-1/2, 0.0) -- (4.0, 0.0) node[right]{\footnotesize$(1+x)^2$};
\draw[-] (0.0, -1/2) -- (0.0, 4.0) node[above]{\footnotesize$(1+y)^3$};
\draw[-] (0.0, 0.0) -- (4.0, 4.0) node[above]{\footnotesize\textcolor{black!40!red}{$(1+xy)^6(1+x^2y^2)^{12}\cdots$}};
\draw[-] (0.0, 0.0) -- (4.0, 2.0) node[right]{\footnotesize\textcolor{black!60!yellow}{$(1+x^2y)^3$}};
\draw[-] (0.0, 0.0) -- (2.0, 4.0);
\draw[-] (0.0, 0.0) -- (4.0, 2.667) node[right]{\footnotesize\textcolor{black!40!blue}{$(1+x^3y^2)^6$}};
\draw[-] (0.0, 0.0) -- (1.333, 4.0);
\draw[-] (0.0, 0.0) -- (2.667, 4.0);
\draw[-] (0.0, 0.0) -- (4.0, 3.0) node[right]{\footnotesize\textcolor{black!40!green}{$(1+x^4y^3)^{14}$}};
\draw[-] (0.0, 0.0) -- (3.0, 4.0);
\end{tikzpicture}
}
\caption{The diagrams $\mathfrak{D}_{\text{exp}}^{3,2}$ and $\mathfrak{D}^{2,3}$ to order $4$.}
\label{fig:exp}
\end{figure}

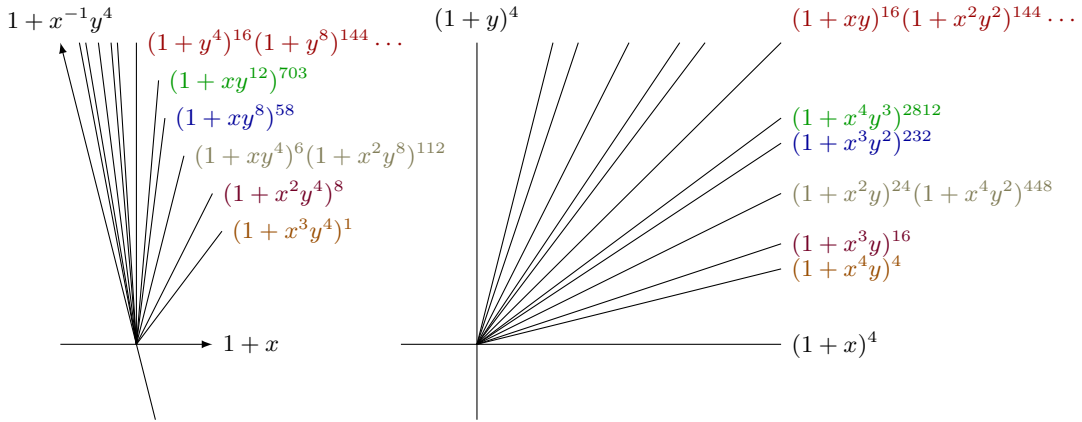
\begin{figure}[h!]
\centering
\makebox[\textwidth]{
\begin{tikzpicture}[scale=1]
\draw[-] (0.0, 0.0) -- (-3/4, 4);
\draw[->] (1/4, -1) -- (-1.0, 4.0) node[above]{\footnotesize$1+x^{-1}y^4$};
\draw[-] (0.0, 0.0) -- (-2/3, 4);
\draw[-] (0.0, 0.0) -- (-1/2, 4);
\draw[-] (0.0, 0.0) -- (-1/3, 4);
\draw[-] (0.0, 0.0) -- (-1/4, 4);
\draw[-] (0.0, 0.0) -- (0.0, 4) node[right]{\footnotesize\textcolor{black!40!red}{$(1+y^4)^{16}(1+y^8)^{144}\cdots$}};
\draw[->] (-1.0, 0.0) -- (1.0, 0.0) node[right]{\footnotesize$1+x$};
\draw[-] (0.0, 0.0) -- (7/24, 7/2) node[right]{\footnotesize\textcolor{black!40!green}{$(1+xy^{12})^{703}$}};
\draw[-] (0.0, 0.0) -- (3/8, 3) node[right]{\footnotesize\textcolor{black!40!blue}{$(1+xy^8)^{58}$}};
\draw[-] (0.0, 0.0) -- (5/8, 5/2) node[right]{\footnotesize\textcolor{black!60!yellow}{$(1+xy^4)^6(1+x^2y^8)^{112}$}};
\draw[-] (0.0, 0.0) -- (1, 2) node[right]{\footnotesize\textcolor{black!40!purple}{$(1+x^2y^4)^8$}};
\draw[-] (0.0, 0.0) -- (9/8, 3/2) node[right]{\footnotesize\textcolor{black!40!orange}{$(1+x^3y^4)^1$}};
\end{tikzpicture}
\hspace{-10mm}
\begin{tikzpicture}[scale=1]
\draw[-] (0.0, -1.0) -- (0.0, 4.0) node[above]{\footnotesize$(1+y)^4$};
\draw[-] (-1.0, 0.0) -- (4.0, 0.0) node[right]{\footnotesize$(1+x)^4$};
\draw[-] (0.0, 0.0) -- (4.0, 4.0) node[above right]{\footnotesize\textcolor{black!40!red}{$(1+xy)^{16}(1+x^2y^2)^{144}\cdots$}};
\draw[-] (0.0, 0.0) -- (2.0, 4.0);
\draw[-] (0.0, 0.0) -- (1.333, 4.0);
\draw[-] (0.0, 0.0) -- (1.0, 4.0);
\draw[-] (0.0, 0.0) -- (4.0, 2.0) node[right]{\footnotesize\textcolor{black!60!yellow}{$(1+x^2y)^{24}(1+x^4y^2)^{448}$}};
\draw[-] (0.0, 0.0) -- (2.667, 4.0);
\draw[-] (0.0, 0.0) -- (4.0, 1.333) node[right]{\footnotesize\textcolor{black!40!purple}{$(1+x^3y)^{16}$}};
\draw[-] (0.0, 0.0) -- (4.0, 2.667) node[right]{\footnotesize\textcolor{black!40!blue}{$(1+x^3y^2)^{232}$}};
\draw[-] (0.0, 0.0) -- (3.0, 4.0);
\draw[-] (0.0, 0.0) -- (4.0, 1.0) node[right]{\footnotesize\textcolor{black!40!orange}{$(1+x^4y)^4$}};
\draw[-] (0.0, 0.0) -- (4.0, 3.0) node[right]{\footnotesize\textcolor{black!40!green}{$(1+x^4y^3)^{2812}$}};
\end{tikzpicture}
}
\caption{The diagrams $\mathfrak{D}_{\text{det}}^4$ and $\mathfrak{D}^{4,4}$ to order $4$.}
\label{fig:det}
\end{figure}

\begin{ex}
\label{ex:det}
Let $\mathfrak{D}_{\text{det}}^k$ be the scattering diagram consisting of two lines with functions $f_1=1+tz^{m_1}$ and $f_2=tz^{m_2}$ such that $m_1$ and $m_2$ are primitive and $|\text{det}(m_1,m_2)|=k$. By an isomorphism of lattices we can bring this into the form $f_1=1+tx$ and $f_2=1+tx^{-1}y^k$. Let $M'$ be the sublattice generated by $m_1=(1,0)$ and $m_2=(-1,k)$ and consider $m = am_1+bm_2 \in M'$ primitive with dual $n\in N'$. Then
\[ f_m^{\mathfrak{D}_{\text{det}}^k} = (f_{(a,b)}^{k,k})^{\text{gcd}(k,m_{(1)})/k}, \]
where $m_{(1)}$ is the first component of $m\in M$. This is because we have $e(m_1^*)=e(m_2^*)=k$ and $e(n)=\text{gcd}(k,m_{(1)})/k$. Figure \ref{fig:det} shows $\mathfrak{D}_{\text{det}}^k$ and $\mathfrak{D}^{k,k}$ for $k=4$ to order $4$.
\end{ex}

\subsection{Deformations}

Given a consistent scattering diagram \(\mathfrak D\), we can form the asymptotic diagram \(\mathfrak D_{\text{as}}\) by replacing every ray \((b_{\mathfrak d} + \mathbb R_{\geq 0} m_{\mathfrak d}, f_{\mathfrak d})\) with \((\mathbb R_{\geq 0} m_{\mathfrak d}, f_{\mathfrak d})\), and similarly for lines. By considering sufficiently large curves in \(\mathfrak D\) around the origin containing all singularities, we see that \(\mathfrak D_{\text{as}}\) is also consistent. We can use this to consider \emph{deformations} as follows. For more details see \cite[\S1.4]{GPS} and \cite[Proposition C.13, Step III]{GHKK}.

\begin{dfn}
The \textit{full deformation} of $\mathfrak{D}^{\mu,\nu}$ consists of general lines $\mathfrak{d}_{1,1},\ldots,\mathfrak{d}_{1,\mu}$ and $\mathfrak{d}_{2,1},\ldots,\mathfrak{d}_{2,\nu}$ with functions 
\[ f_{\mathfrak{d}_{1,i}} = 1+tx, \quad f_{\mathfrak{d}_{2,i}} = 1+ty. \]
Here the lines being general means that all rays of rational slope in the consistent diagram intersect in points, not in rays. We will also consider \textit{partial deformations} obtained by pulling only one factor out, as in Figure \ref{fig:pdef31}.
\end{dfn}

\begin{prop}[\cite{GPS},\S1.4]
Let $\mathfrak{D}'$ be a partial or full deformation of $\mathfrak{D}$. Then
\[ (\mathfrak{D}'_\infty)_{\text{as}} = \mathfrak{D}_\infty. \]
\end{prop}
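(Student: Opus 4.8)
The plan is to exhibit $(\mathfrak D'_\infty)_{\mathrm{as}}$ as \emph{the} consistent scattering diagram obtained from $\mathfrak D_0^{\mu,\nu}$, and then to conclude by the uniqueness of consistent completions (Remark~\ref{rem:unique}). First I would check that $(\mathfrak D'_\infty)_{\mathrm{as}}$ is consistent: this is the ``sufficiently large curves around the origin'' argument already recorded above, now applied with $\mathfrak D$ replaced by the consistent diagram $\mathfrak D'_\infty$. Concretely, a circle enclosing every singularity of $\mathfrak D'_\infty$ yields the trivial automorphism; as its radius grows it crosses each asymptotic wall in the same way, and (for a fixed $t$-order $k$) eventually in the same cyclic order, as the corresponding wall of $\mathfrak D'_\infty$, so the loop around the unique singularity $0$ of $(\mathfrak D'_\infty)_{\mathrm{as}}$ is again trivial. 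One should note in passing that $(\mathfrak D'_\infty)_{\mathrm{as}}$ is genuinely a scattering diagram, i.e.\ satisfies the finiteness axiom, which is clear because the asymptotic operation moves base points but leaves functions unchanged.

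Next I would identify the walls of $(\mathfrak D'_\infty)_{\mathrm{as}}$. By the existence proposition, $\mathfrak D'_\infty$ is the union of the deformed lines $\mathfrak d_{i,j}$ with a family of rays; under the asymptotic operation every ray stays a ray, while the deformed lines all become lines through the origin. Here I would use that two parallel walls with the same primitive direction $m$ compose to the single automorphism $z^{m'}\mapsto z^{m'}(f_1f_2)^{\langle m',n\rangle}$: this follows from $\theta_{\mathfrak d}\colon z^{m'}\mapsto z^{m'}f_{\mathfrak d}^{\langle m',n\rangle}$ together with the facts that $f_{\mathfrak d}$ is a function of $z^{m}$ and $\langle m,n\rangle=0$, so such crossings commute and multiply their functions. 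Grouping the $\mu$ asymptotic lines of direction $(1,0)$ and the $\nu$ of direction $(0,1)$ accordingly, $(\mathfrak D'_\infty)_{\mathrm{as}}$ is equivalent to a diagram whose only lines are $(\mathbb R(1,0),(1+tx)^\mu)$ and $(\mathbb R(0,1),(1+ty)^\nu)$ — that is, it contains $\mathfrak D_0^{\mu,\nu}$ — and which otherwise consists only of rays. For a partial deformation the same grouping recombines the factor that was pulled out with the remaining exponent, with the same outcome. Combining this with the consistency from the first step and merging any rays of equal support, Remark~\ref{rem:unique} forces $(\mathfrak D'_\infty)_{\mathrm{as}}=\mathfrak D_\infty^{\mu,\nu}=\mathfrak D_\infty$.

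The step I expect to require the most care is the consistency argument: one must verify that along a sufficiently large circle (of radius large relative to the finitely many base points relevant modulo $\mathfrak m_R^k$) the walls sharing a common asymptotic direction are crossed consecutively, with no wall of a different direction interleaved, so that the $\gamma$-ordered product of $\mathfrak D'_\infty$ degenerates precisely to that of $(\mathfrak D'_\infty)_{\mathrm{as}}$ around the origin. The commutation of collinear wall-crossings established in the second step is exactly what makes the ordering within each such block immaterial; everything past that is a formal application of uniqueness.
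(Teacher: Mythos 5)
Your argument is correct and is essentially the one the paper has in mind: the paper states this result by citation to \cite[\S 1.4]{GPS} and \cite[Proposition C.13, Step III]{GHKK}, and the surrounding text already sketches exactly your route --- consistency of the asymptotic diagram via sufficiently large loops around all singularities, commutation and merging of parallel walls so that the asymptotic diagram contains $\mathfrak D_0^{\mu,\nu}$ plus rays only, and then uniqueness of the consistent completion (Remark \ref{rem:unique}), with equality understood up to equivalence. Your identification of the delicate point (clustering of crossings by asymptotic direction at fixed $t$-order, with commuting automorphisms within each cluster) is the right one and is handled correctly.
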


\begin{ex}
    We can use a partial deformation of the diagram \(\mathfrak D^{3, 1}\) to compute \(\mathfrak D_\infty^{3, 1}\), by performing scattering at each singularity individually. We obtain the scattering diagram shown in Figure \ref{fig:pdef31}, so taking the asymptotic diagram we get \(\mathfrak D^{3, 1}_\infty\).
    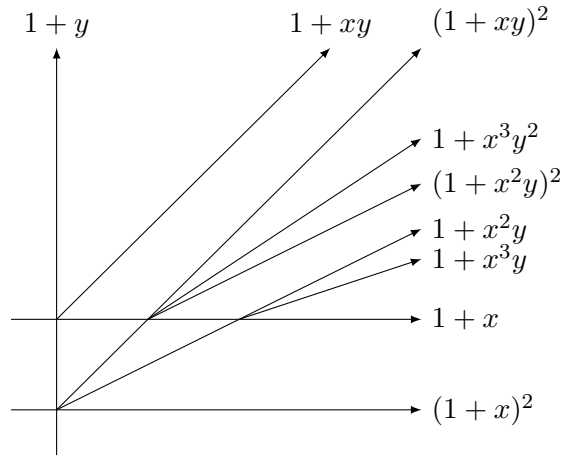
\begin{figure}[h!]
    \centering
    \begin{tikzpicture}[scale=1.2]
    \draw[->] (0.0, -1/2) -- (0.0, 4.0) node[above]{\(1 + y\)};
    \draw[->] (-1/2, 0.0) -- (4.0, 0.0) node[right]{\((1 + x)^2\)};
    \draw[->] (-1/2, 1.0) -- (4.0, 1.0) node[right]{\(1 + x\)};
    \draw[->] (0.0, 0.0) -- (4.0, 4.0) node[above right]{\((1 + x y)^2\)};
    \draw[->] (0.0, 1.0) -- (3.0, 4.0) node[above]{\(1 + x y\)};
    \draw[->] (0.0, 0.0) -- (4.0, 2.0) node[right]{\(1 + x^2 y\)};
    \draw[->] (1.0, 1.0) -- (4.0, 2.5) node[right]{\((1 + x^2 y)^2\)};
    \draw[->] (1.0, 1.0) -- (4.0, 3.0) node[right]{\(1 + x^3 y^2\)};
    \draw[->] (2.0, 1.0) -- (4.0, 1.6666) node[right]{\(1 + x^3 y\)};
    \end{tikzpicture}
    \caption{Computing \(\mathfrak D_\infty^{3, 1}\) using partial deformation.}
    \label{fig:pdef31}
    \end{figure}
\end{ex}

\begin{lem}
If \(\mu \leq \mu'\) and \(\nu \leq \nu'\), \[c_{a, b}^{\mu, \nu} \leq c_{a, b}^{\mu', \nu'} .\]
\end{lem}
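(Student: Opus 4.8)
The plan is to realize $\mathfrak{D}^{\mu',\nu'}$ as the asymptotic diagram of a deformation that produces a copy of $\mathfrak{D}^{\mu,\nu}$ at a single point, and then read off the inequality from positivity (Proposition~\ref{prop:pos}).

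First I would deform $\mathfrak{D}_0^{\mu',\nu'}$ by pulling the factor $(1+tx)^{\mu'-\mu}$ out of the horizontal line and $(1+ty)^{\nu'-\nu}$ out of the vertical line, replacing the two initial lines by at most four lines $H_1,H_2,V_1,V_2$ with functions $(1+tx)^\mu$, $(1+tx)^{\mu'-\mu}$, $(1+ty)^\nu$, $(1+ty)^{\nu'-\nu}$ respectively (omitting $H_2$ when $\mu'=\mu$ and $V_2$ when $\nu'=\nu$, so that every function is $\neq 1$), placed in general position. Writing $\mathfrak{D}'$ for this initial diagram, the deformation proposition of \cite[\S1.4]{GPS} gives $(\mathfrak{D}'_\infty)_{\text{as}}=\mathfrak{D}^{\mu',\nu'}_\infty=\mathfrak{D}^{\mu',\nu'}$. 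In general position $H_1$ and $V_1$ meet at a point $p$ at which the only local singularity is their crossing, so near $p$ the diagram $\mathfrak{D}'_\infty$ is the translate to $p$ of the scattering of the two lines $(1+tx)^\mu$, $(1+ty)^\nu$, i.e.\ of $\mathfrak{D}^{\mu,\nu}$. Hence for each $(a,b)\in\mathbb{Z}_{>0}^2$ the rays of $\mathfrak{D}'_\infty$ emitted from $p$ in direction $(a,b)$ have functions whose product, written in factored form, has $(1+t^{a+b}x^ay^b)$-exponent equal to $c_{a,b}^{\mu,\nu}$.

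Next I would pass to the asymptotic diagram. After collapsing parallel rays to a single one in each direction (Remark~\ref{rem:unique}), $f_{(a,b)}^{\mu',\nu'}$ equals the product of $f_{\mathfrak{d}}$ over all rays $\mathfrak{d}$ of $\mathfrak{D}'_\infty$ asymptotic to $\mathbb{R}_{\geq 0}(a,b)$, a set that includes the rays emitted from $p$. For rays in a fixed direction $(a,b)$ the assignment $f_{\mathfrak{d}}=\prod_{k\geq 1}(1+t^{k(a+b)}x^{ka}y^{kb})^{c_k(\mathfrak{d})}\mapsto(c_k(\mathfrak{d}))_k$ becomes, after taking logarithms, the inverse of a lower-triangular $\mathbb{Q}$-linear change of coordinates with $1$'s on the diagonal, hence it is additive over products (which are finite at each order in $t$). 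Therefore $c_{a,b}^{\mu',\nu'}$, the $(1+t^{a+b}x^ay^b)$-exponent of $f_{(a,b)}^{\mu',\nu'}$, is the sum over all these rays $\mathfrak{d}$ of the corresponding exponents; the rays from $p$ contribute $c_{a,b}^{\mu,\nu}$, and by positivity (Proposition~\ref{prop:pos}) every other ray contributes a non-negative amount, so $c_{a,b}^{\mu',\nu'}\geq c_{a,b}^{\mu,\nu}$.

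The step I expect to be the main obstacle is this last appeal to positivity, since Proposition~\ref{prop:pos} is stated only for standard scattering diagrams whereas it is used here for the deformed diagram $\mathfrak{D}'_\infty$. I would deal with this either by invoking the more general form of the positivity argument --- the proof of \cite[Proposition C.13]{GHKK}, or the tropical disk count of \cite{GPS} --- which applies to any scattering diagram all of whose initial functions have the form $(1+tz^m)^c$ with $c\in\mathbb{Z}_{\geq 0}$, or by resolving the singularities of $\mathfrak{D}'$ one at a time and using the change-of-lattice trick (Proposition~\ref{prop:col}) to reduce each local scattering to a standard one, so that positivity propagates through the whole construction.
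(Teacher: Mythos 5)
Your proof is correct and follows essentially the same route as the paper: deform $\mathfrak{D}^{\mu',\nu'}$ so that one horizontal line carries $(1+tx)^{\mu}$ and one vertical line carries $(1+ty)^{\nu}$, identify the local scattering at their crossing with a copy of $\mathfrak{D}^{\mu,\nu}$, and conclude from positivity of all coefficients that the remaining contributions can only increase $c_{a,b}^{\mu',\nu'}$. The additional care you take about additivity of the exponents in a fixed direction and about positivity holding for the deformed (non-standard) diagram, via \cite[Proposition C.13]{GHKK} or the change-of-lattice trick, only makes explicit what the paper's two-line proof leaves implicit.
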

\begin{proof}
Deform \(\mathfrak D^{\mu', \nu'}\) in a way so that we have a horizontal line with function \((1 + x)^\mu\) and a vertical line with function \((1 + y)^\nu\). From this we get a ray contributing \(c_{a, b}^{\mu, \nu}\). As all coefficients are positive by Proposition \ref{prop:pos}, we see that \(c_{a, b}^{\mu, \nu} \leq c_{a, b}^{\mu', \nu'}\).
\end{proof}

\subsection{Relation with curve counting}
\label{S:GW}

A scattering diagram is naturally the support of a \emph{tropical curve} (see \cite[Definition 2.1]{GPS}). There is a bijective correspondence between rays in the diagram and tropical curves supported on the diagram, and the coefficient of the function attached to a ray equals the multiplicity of the tropical curve (\cite[Theorem 2.4]{GPS}). Together with a tropical correspondence theorem (\cite[Theorem 3.4]{GPS}) and the degeneration formula (\cite[\S5.3]{GPS}) this leads to the following correspondence, which is a special case of \cite[Theorem 5.4]{GPS}.

Let $m=(a,b) \in M$ be a primitive direction vector. Let $Y_{a,b}^{\mu,\nu}$ be the blow up of the weighted projective plane $\mathbb{P}_{x_0,x_1,x_2}(1,a,b)$ in $\mu$ general points on the degree $a$ divisor $D_1=\{x_1=0\}$ and $\nu$ general points on the degree $b$ divisor $D_2=\{x_2=0\}$. Let $k>0$, let $\mathbf{P}_1,\mathbf{P}_1$ be ordered partitions, possibly with zeros, of $ka,kb$ of lengths $\mu,\nu$, respectively. Let $H$ be the pull back of the hyperplane class of $\mathbb{P}_{x_0,x_1,x_2}(1,a,b)$ to $Y_{a,b}^{\mu,\nu}$, and let $E_{ij}$ be the class of the exceptional divisors, with $i=1,2$ corresponding to blow ups of points on $D_1$ and $D_2$, respectively.

\begin{dfn}
Let $N_{ka,kb}[\mathbf{P}_1,\mathbf{P}_2]$ be the logarithmic \cite{GSloggw} (or relative \cite{Li}) Gromov-Witten invariant counting rational curves on $Y_{a,b}^{\mu,\nu}$ of class $\beta=kH-\sum_{j=1}^\mu p_{1j}E_{1j}-\sum_{j=1}^\nu p_{2j}E_{2j}$ which intersect $D_0=\{x_0=0\}$ in a single unspecified point with maximal tangency. This is a finite number, since $(Y_{a,b},D_0)$ is a log Calabi-Yau pair. But in general $N_{a,b}\in\mathbb{Q}$ is not integral, because there are virtual contributions to the count coming from non-trivial automorphisms.
\end{dfn}

\begin{prop}[{\cite[Theorem 5.4]{GPS}}]
\label{prop:GPS}
Let $\mathfrak{D}$ be the scattering diagram for the ring $\mathbb{C}[x^{\pm 1},y^{\pm 1}]\llbracket t_{1,1},\ldots,t_{1,\mu},t_{2,1}\ldots,t_{2,\nu}\rrbracket$ consisting of two lines with functions $f_1=\prod_{j=1}^\mu (1+t_{1,j}x)$ and $f_2=\prod_{j=1}^\nu (1+t_{2,j}y)$. Then
\[ \log f_{(a,b)}^{\mathfrak{D}} = \sum_{k>0}\sum_{(\mathbf{P}_1,\mathbf{P}_2)} k N_{ka,kb}[\mathbf{P}_1,\mathbf{P}_2]\left(\prod_{j=1}^\mu t_{1,j}^{p_{1,j}}\right)\left(\prod_{j=1}^\nu t_{2,j}^{p_{2,j}}\right)x^{ak}y^{bk}, \]
where the second sum is over pairs of ordered partitions of $ka$ and $kb$ of lengths $\mu$ and $\nu$, respectively.
\end{prop}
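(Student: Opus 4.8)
The plan is to obtain this as a direct specialization of \cite[Theorem 5.4]{GPS}: the statement above is essentially that theorem applied to the two-line configuration whose factors have been labelled by independent parameters $t_{1,j}$ and $t_{2,j}$, so the content of a self-contained argument is to recall how the combinatorics on the scattering side translates to the enumerative side and to match the index sets. I would begin by noting that, by the deformation invariance recalled above, $f_{(a,b)}^{\mathfrak{D}}$ may be computed from any generic full deformation of $\mathfrak{D}$ into $\mu+\nu$ general lines with functions $1+t_{1,j}x$ and $1+t_{2,j}y$.

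Next I would invoke the scattering--tropical dictionary \cite[Theorem 2.4]{GPS}: the coefficient of a monomial $\left(\prod_{j} t_{1,j}^{p_{1,j}}\right)\left(\prod_{j} t_{2,j}^{p_{2,j}}\right) x^{ka} y^{kb}$ in $\log f_{(a,b)}^{\mathfrak{D}}$ is, up to the factor $k$ produced by taking $\log$ of an overall $k$-fold monomial, a multiplicity-weighted count of tropical curves whose unbounded legs are distributed among the $\mu+\nu$ lines with weights recorded exactly by the exponents $p_{i,j}$; hence the relevant index set is precisely the set of pairs of ordered partitions $(\mathbf{P}_1,\mathbf{P}_2)$ of $(ka,kb)$ of lengths $(\mu,\nu)$. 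Feeding this through the tropical correspondence theorem \cite[Theorem 3.4]{GPS} and the degeneration formula \cite[\S5.3]{GPS} identifies the weighted tropical count with the log Gromov--Witten invariant $N_{ka,kb}[\mathbf{P}_1,\mathbf{P}_2]$ on $Y_{a,b}^{\mu,\nu}$, the partition entries $p_{i,j}$ becoming the multiplicities of the exceptional classes $E_{ij}$ in the class $\beta$ and the maximal-tangency condition along $D_0$ being dictated by the asymptotic direction $k(a,b)$. Summing over $k>0$ and over all such partitions yields the displayed identity.

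The main obstacle is bookkeeping rather than mathematics: one must line up the conventions of \cite{GPS} --- the factor $k$, the maximal-tangency condition along $D_0$, the possible non-integrality of $N_{ka,kb}$ caused by automorphisms of multiple covers, and the fact that here $f_{(a,b)}^{\mathfrak{D}}$ is presented in the factorized form $\prod_k(1+t^{k(a+b)}x^{ka}y^{kb})^{c_{ka,kb}}$ --- so that after taking $\log$ the multi-cover contributions and the logarithm expansion reassemble correctly into the single sum stated. Since the general multi-line case is already \cite[Theorem 5.4]{GPS}, I would spell out only the identification of index sets (monomials in the $t_{i,j}$ versus ordered partitions $(\mathbf{P}_1,\mathbf{P}_2)$) in full detail, and cite the rest.
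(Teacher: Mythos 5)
Your proposal is correct and matches the paper's treatment: the paper offers no independent proof but presents this statement as a special case of \cite[Theorem 5.4]{GPS}, obtained through exactly the chain you describe (full deformation, the ray--tropical curve correspondence of \cite[Theorem 2.4]{GPS}, the tropical correspondence theorem \cite[Theorem 3.4]{GPS}, and the degeneration formula of \cite[\S5.3]{GPS}). Your extra care in matching monomials in the $t_{i,j}$ with ordered partitions $(\mathbf{P}_1,\mathbf{P}_2)$ is consistent with, and slightly more explicit than, what the paper records.
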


Note that by symmetry of the blow ups the numbers $N_{ka,kb}[\mathbf{P}_1,\mathbf{P}_2]$ do not depend on the ordering. Hence we can instead consider unordered partitions (without zeros) $\vec{p}_1$ and $\vec{p}_2$ of $ka$ and $kb$ of lengths at most $\mu$ and $\nu$, respectively.

\begin{dfn}
\label{dfn:N}
Define
\[ N_{\vec{p}_1,\vec{p}_2} := N_{ka,kb}[\mathbf{P}_1,\mathbf{P}_2]. \]
for any ordered partitions $\mathbf{P}_1,\mathbf{P}_2$ with the same elements as $\vec{p}_1,\vec{p}_2$, filled up with zeros.

This is the number of curves with prescribed intersection pattern with the exceptional divisors, but where we forget the labeling of the divisors.
\end{dfn}

\begin{dfn}
For a partition $\vec{p}$, let $\ell(\vec{p})$ be the tuple whose entries are the number of times with which the entries of $\vec{p}$ occur.
\end{dfn}

\begin{cor}
\label{cor:GPS}
We have
\[ \log f_{a,b}^{\mu,\nu} = \sum_{k>0}k N_{ka,kb}^{\mu,\nu}t^{ka+kb}x^{ka}y^{kb}, \]
where
\[ N_{ka,kb}^{\mu,\nu} = \sum_{(\vec{p}_1,\vec{p}_2)} \binom{\mu}{\ell(\vec{p}_1)}\binom{\nu}{\ell(\vec{p}_2)}N_{\vec{p}_1,\vec{p}_2}, \]
where the sum is over pairs of unordered partitions $\vec{p}_1$ and $\vec{p}_2$ of $ka$ and $kb$ of lengths at most $\mu$ and $\nu$, respectively, and the coefficients are multinomial coefficients.
\end{cor}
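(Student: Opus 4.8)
The plan is to deduce Corollary~\ref{cor:GPS} from Proposition~\ref{prop:GPS} in two moves: first specialize all the formal deformation parameters $t_{1,j},t_{2,j}$ to a single parameter $t$, and then repackage the resulting sum over ordered partitions into one over unordered partitions.

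For the first move, consider the local $\mathbb{C}$-algebra homomorphism
\[ \phi\colon \mathbb{C}\llbracket t_{1,1},\dots,t_{1,\mu},t_{2,1},\dots,t_{2,\nu}\rrbracket \longrightarrow \mathbb{C}\llbracket t\rrbracket, \qquad t_{i,j}\mapsto t. \]
It is continuous for the $\mathfrak m$-adic topologies and sends the $k$-th power of the maximal ideal into $(t^k)$, so base change along $\phi$ takes scattering diagrams to scattering diagrams, respects the finiteness condition in the definition, and preserves consistency, because the wall-crossing automorphisms $z^m\mapsto z^m f_{\mathfrak d}^{\langle m,n_{\mathfrak d}\rangle}$ and their $\gamma$-ordered products are functorial in the coefficient ring. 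The two initial lines of Proposition~\ref{prop:GPS} are carried by $\phi$ to the lines with functions $(1+tx)^\mu$ and $(1+ty)^\nu$, i.e.\ to the initial data $\mathfrak D_0^{\mu,\nu}$ of Definition~\ref{def:standard}; hence, by uniqueness of the consistent completion (Remark~\ref{rem:unique}), the base change of the consistent diagram $\mathfrak D$ is $\mathfrak D^{\mu,\nu}$, and in particular $\phi\big(f_{(a,b)}^{\mathfrak D}\big)=f_{(a,b)}^{\mu,\nu}$.

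Applying $\phi$ to the identity of Proposition~\ref{prop:GPS}, noting that $\phi$ commutes with $\log$ and that $\sum_j p_{1,j}=ka$, $\sum_j p_{2,j}=kb$ force every monomial $\prod_j t_{1,j}^{p_{1,j}}\prod_j t_{2,j}^{p_{2,j}}$ to go to $t^{ka+kb}$, we obtain
\[ \log f_{(a,b)}^{\mu,\nu}=\sum_{k>0}k\Bigg(\sum_{(\mathbf P_1,\mathbf P_2)}N_{ka,kb}[\mathbf P_1,\mathbf P_2]\Bigg)\,t^{ka+kb}x^{ka}y^{kb}, \]
the inner sum running over pairs of ordered partitions of $ka,kb$ of lengths $\mu,\nu$. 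It remains to identify the inner sum with $N_{ka,kb}^{\mu,\nu}$. Since, by symmetry of the blow-ups, $N_{ka,kb}[\mathbf P_1,\mathbf P_2]$ depends only on the underlying multisets, it equals $N_{\vec p_1,\vec p_2}$ (Definition~\ref{dfn:N}), where $\vec p_i$ is the partition obtained from $\mathbf P_i$ by discarding zeros. Grouping the ordered partitions according to this data, the number of ordered $\mu$-tuples of nonnegative integers whose nonzero parts form the partition $\vec p_1$ (the remaining entries being zero) is precisely the multinomial coefficient $\binom{\mu}{\ell(\vec p_1)}$, with the block counting the zeros left implicit; similarly for the $\nu$-tuples. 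Substituting gives $\sum_{(\vec p_1,\vec p_2)}\binom{\mu}{\ell(\vec p_1)}\binom{\nu}{\ell(\vec p_2)}N_{\vec p_1,\vec p_2}$, which is the definition of $N_{ka,kb}^{\mu,\nu}$.

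The one point that is not pure bookkeeping is the compatibility of scattering with the specialization $\phi$; this is essentially the standard fact that consistent completion commutes with base change of the coefficient ring, but it should be stated with a little care about the $\mathfrak m$-adic topology. The rest is unwinding definitions, and I expect the multinomial count of ordered versus unordered partitions to be the part most prone to indexing slips — in particular, matching the convention for $\binom{\mu}{\ell(\vec p)}$ used in the statement of the corollary.
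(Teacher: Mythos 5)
Your proposal is correct and follows essentially the same route as the paper, whose proof is precisely the specialization $t_{i,j}\mapsto t$ applied to Proposition~\ref{prop:GPS}; the multinomial regrouping over unordered partitions is already set up in the paper's surrounding text (the symmetry remark and Definition~\ref{dfn:N}). Your extra care about the base change commuting with consistent completion and the zero-block convention in $\binom{\mu}{\ell(\vec p_1)}$ is sound, just more explicit than the paper's one-line argument.
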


\begin{proof}
This follows from Proposition \ref{prop:GPS} by replacing all $t_{i,j}$ by the same $t$.
\end{proof}

\subsection{Relation with quiver representations}
\label{S:quiv}

We recall the basics of quiver representations from \cite{Kin}\cite{Rei1} and of bipartite quivers from \cite{RW}.

\begin{dfn}
A \emph{quiver} $Q$ is a directed multigraph, defined by a set $Q_0$ of vertices and a set $Q_1$ of arrows $\alpha : i \rightarrow j$. A \emph{quiver representation} of $Q$ with dimension vector $\mathbf{d}=(d_i)\in\mathbb{N}^{|Q_0|}$ is a collection of vector spaces $V_i$ of dimension $d_i$ and, for each arrow $\alpha : i \rightarrow j$, a linear map $V_i \rightarrow V_j$.

For $\Theta : \mathbb{Z}^{|Q_0|} \rightarrow \mathbb{Z}$ a representation is $\Theta$-stable if for each proper subrepresentation the dimension vector $\mathbf{d}'$ satisfies
\[ \rho_\Theta(\mathbf{d}') := \frac{\Theta(\mathbf{d}')}{\sum_i d'_i} < \rho_\Theta(\mathbf{d}) := \frac{\Theta(\mathbf{d})}{\sum_i d_i}. \]
There exists a moduli space $M_{\mathbf{d}}^{\Theta-st}(Q)$ of isomorphism classes of $\Theta$-stable representations of $Q$ with dimension vector $\mathbf{d}$. If non-empty, it is a smooth irreducible variety of dimension $1-\braket{\mathbf{d},\mathbf{d}}$, where we used the Euler form
\[ \braket{\mathbf{d},\mathbf{e}} = \sum_i d_ie_i - \sum_{\alpha : i \rightarrow j} d_ie_j. \]
\end{dfn}

\begin{ex}
The $\mu$-Kronecker quiver $Q_\mu$ is the quiver with $2$ vertices and $\mu$ arrows, all from one to the other. The complete bipartite quiver $Q_{\mu,\nu}$ has $\mu+\nu$ vertices and $\mu\nu$ arrows, one from each of the first $\mu$ to each of the last $\nu$.
\end{ex}

\begin{dfn}
\label{dfn:bipartite}
Define
\[ M_{(a,b)}^{\Theta-st}(Q_{\mu,\nu}) := \coprod_{\mathbf{P}_1,\mathbf{P}_2} M_{(\mathbf{P}_1,\mathbf{P}_2)}^{\Theta-st}(Q_{\mu,\nu}), \]
where the coproduct is over all pairs $\mathbf{P}_1,\mathbf{P}_2$ of ordered partitions, possibly with zeros, of $a,b$ of lengths $\mu,\nu$, respectively.
\end{dfn}

\begin{prop}
We have
\[ M_{(d_1,d_2)}^{\Theta-st}(Q_\mu) \simeq M_{(d_1,d_2)}^{\Theta-st}(Q_{\mu,\mu}). \]
In particular, this induces a decomposition as above for $M_{(d_1,d_2)}^{\Theta-st}(Q_\mu)$.
\end{prop}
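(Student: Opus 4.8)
The natural approach is to realize $Q_{\mu,\mu}$ as the cyclic unfolding of $Q_\mu$. Write the arrows of $Q_\mu$ as $a_0,\dots,a_{\mu-1}$, label the sources of $Q_{\mu,\mu}$ by $i\in\mathbb Z/\mu$ and its sinks by $j\in\mathbb Z/\mu$, and name the unique arrow from source $i$ to sink $j$ by $\alpha_{i,j}$. First I would record the Galois covering of quivers
\[ p\colon Q_{\mu,\mu}\longrightarrow Q_\mu,\qquad i\mapsto 1,\quad j\mapsto 2,\quad \alpha_{i,j}\mapsto a_{\,j-i\bmod\mu}, \]
with group $\mathbb Z/\mu$ acting by simultaneous cyclic rotation of sources and of sinks; equivalently $\mathbb C Q_{\mu,\mu}\cong\mathbb C Q_\mu\rtimes\mathbb Z/\mu$. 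The push-down along $p$ sends a representation of $Q_{\mu,\mu}$ with spaces $(A_i),(B_j)$ and maps $(f_{i,j})$ to the $Q_\mu$-representation $V_1=\bigoplus_iA_i$, $V_2=\bigoplus_jB_j$ whose $k$-th arrow acts by $\bigoplus_i f_{i,\,i+k}$. This identifies representations of $Q_{\mu,\mu}$ of dimension vector $(\mathbf P_1,\mathbf P_2)$ with pairs consisting of a representation of $Q_\mu$ of dimension vector $(d_1,d_2)=(\textstyle\sum_ip_i,\sum_jq_j)$ together with a $\mathbb Z/\mu$-grading of $V_1,V_2$ of type $(\mathbf P_1,\mathbf P_2)$ for which the $k$-th arrow is homogeneous of degree $k$; moreover it intertwines the gauge group $\prod_iGL(A_i)\times\prod_jGL(B_j)$ with the grading-preserving subgroup of $GL(V_1)\times GL(V_2)$, and it carries the $\Theta$ chosen on $Q_{\mu,\mu}$ to the induced $\Theta$ on $Q_\mu$ (and conversely a generic $\Theta$ over a fixed one on $Q_\mu$ breaks the cyclic symmetry).

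Next, since push-down is exact and sends subrepresentations to graded subrepresentations, one checks it preserves and reflects $\Theta$-stability, and the target is the bijection
\[ \coprod_{(\mathbf P_1,\mathbf P_2)} M_{(\mathbf P_1,\mathbf P_2)}^{\Theta-st}(Q_{\mu,\mu})\;\xrightarrow{\ \sim\ }\; M_{(d_1,d_2)}^{\Theta-st}(Q_\mu) \]
on closed points. Surjectivity and injectivity amount to the covering-theoretic fact that every $\Theta$-stable $Q_\mu$-representation acquires a $\mathbb Z/\mu$-equivariant structure, unique up to isomorphism once $\Theta$ is generic — of the $\mu$ cyclically related equivariant structures exactly one then survives, which also pins down the partition type. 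This is the bipartite incarnation of the classical covering theory for quiver representations (Bongartz–Gabriel, Gabriel), in the form used by Reineke–Weist; because $M_{(d_1,d_2)}^{\Theta-st}(Q_\mu)$ is empty or irreducible, the bijection forces all but one summand on the right to vanish, which is the decomposition asserted in the statement. To upgrade the bijection to an isomorphism of varieties I would argue that push-down identifies the deformation theory (being exact and inducing isomorphisms on the $\operatorname{Ext}^1$-groups of stable objects), so the map is étale and hence an isomorphism of the one nonempty summand onto $M_{(d_1,d_2)}^{\Theta-st}(Q_\mu)$.

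The main obstacle is the middle step: pinning down the precise dictionary between $\Theta$-stability on the two quivers and proving existence and uniqueness of the equivariant structure on a stable $Q_\mu$-representation. Everything else — the combinatorics of the covering $p$ and the $\operatorname{Ext}^1$-comparison — is formal once this is in place. As a cross-check I would also run the comparison through the torus action on $M_{(d_1,d_2)}^{\Theta-st}(Q_\mu)$ that rescales the $\mu$ arrows independently and match the resulting strata with the bipartite moduli, in the spirit of Weist's Euler-characteristic computations for Kronecker moduli.
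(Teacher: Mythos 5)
The paper states this proposition without proof, so there is no argument of the authors to compare yours against; your proposal has to stand on its own, and its central step fails. The covering $p\colon Q_{\mu,\mu}\to Q_\mu$ you construct is fine, but the pivotal claim --- that every $\Theta$-stable representation of $Q_\mu$ admits a $\mathbb Z/\mu$-equivariant structure, unique up to isomorphism --- is false. Push-down/pull-up along $p$ only reaches the \emph{gradable} representations, i.e.\ the fixed locus of the induced action on $M^{\Theta-st}_{(d_1,d_2)}(Q_\mu)$, and a generic stable representation is not gradable. Already for $\mu=2$, $(d_1,d_2)=(1,1)$ one has $M^{(1,0)-st}_{(1,1)}(Q_2)\cong\mathbb P^1$, and a point with both arrows nonzero admits no $\mathbb Z/2$-grading making $a_k$ homogeneous of degree $k$, since a grading of a one-dimensional space is concentrated in a single degree. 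Hence your map from the coproduct of Definition \ref{dfn:bipartite} is not surjective, and the \'etale/$\operatorname{Ext}^1$ comparison also breaks: for a gradable stable representation the equivariant $\operatorname{Ext}^1$ is only a direct summand of $\operatorname{Ext}^1_{Q_\mu}$, which is visible in the dimensions --- by Proposition \ref{prop:irred} a bipartite component has dimension $1-\sum_j p_{1j}^2-\sum_j p_{2j}^2+d_1d_2$, whereas $\dim M^{\Theta-st}_{(d_1,d_2)}(Q_\mu)=1-d_1^2-d_2^2+\mu d_1d_2$, e.g.\ $0$ versus $1$ in the example above.

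Your last step is moreover inconsistent with the paper's own data: you argue that irreducibility of $M^{\Theta-st}_{(d_1,d_2)}(Q_\mu)$ forces all but one summand $M^{\Theta-st}_{(\mathbf P_1,\mathbf P_2)}(Q_{\mu,\mu})$ to vanish, but by Example \ref{expl:scat} and Proposition \ref{prop:chi} one has $\chi\bigl(M^{(1,0)-st}_{(1,1)}(Q_{2,2})\bigr)=c_{1,1}^{2,2}=4$, so four summands are nonempty (four reduced points), while $\chi(\mathbb P^1)=2$. The same example shows that the displayed statement cannot be established as a literal isomorphism of varieties between the Kronecker moduli space and the coproduct of Definition \ref{dfn:bipartite} by any argument of the kind you outline: what covering theory honestly gives (Weist's localization \cite{Wei}, used in \cite{RW}) is an identification of the bipartite moduli, counted up to the deck action on dimension vectors, with the torus-fixed components of the Kronecker moduli, and hence Euler-characteristic identities with the corresponding bookkeeping rather than a global isomorphism. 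So you are in the right circle of ideas, but the middle step you yourself flagged as the main obstacle --- existence and uniqueness of the equivariant structure on an arbitrary stable $Q_\mu$-representation --- is exactly where the argument collapses, and no genericity assumption on $\Theta$ repairs it.
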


\begin{prop}[{\cite[Theorem 5.1]{RW}}]
\label{prop:irred}
If $M_{(\mathbf{P}_1,\mathbf{P}_2)}^{\Theta-st}(Q_{\mu,\nu})$ is non-empty, then it is a smooth irreducible variety of dimension
\[ 1-\braket{(\mathbf{P}_1,\mathbf{P}_2),(\mathbf{P}_1,\mathbf{P}_2)} = 1 - \sum_{j=1}^\mu p_{1j}^2 - \sum_{j=1}^\nu p_{2j}^2 + ab. \]
\end{prop}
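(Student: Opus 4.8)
The plan is to read the statement off from the general structure theory of moduli of quiver representations recalled in \S\ref{S:quiv}, the only genuine computation being that of the Euler form of $Q_{\mu,\nu}$. First I would note that, by definition, $M_{(\mathbf{P}_1,\mathbf{P}_2)}^{\Theta-st}(Q_{\mu,\nu})$ is the moduli space $M_{\mathbf{d}}^{\Theta-st}(Q_{\mu,\nu})$ of $\Theta$-stable representations of $Q_{\mu,\nu}$ with the dimension vector $\mathbf{d}=(d_v)_v=(p_{11},\dots,p_{1\mu},p_{21},\dots,p_{2\nu})\in\mathbb{N}^{\mu+\nu}$ obtained by concatenating the two ordered partitions. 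The general fact recalled in \S\ref{S:quiv} then immediately gives that this space, if non-empty, is a smooth irreducible variety of dimension $1-\braket{\mathbf{d},\mathbf{d}}$, so it remains only to evaluate $\braket{\mathbf{d},\mathbf{d}}$ for the complete bipartite quiver.

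For this I would use that $Q_{\mu,\nu}$ is acyclic, with its vertex set split into a left block of size $\mu$ and a right block of size $\nu$ and carrying exactly one arrow from each left vertex to each right vertex (so $\mu\nu$ arrows in total, and no relations). With the labelling of $\mathbf{d}$ above, the vertex contribution to $\braket{\mathbf{d},\mathbf{d}}=\sum_v d_v^2-\sum_{\alpha:v\to w}d_v d_w$ is $\sum_{i=1}^\mu p_{1i}^2+\sum_{j=1}^\nu p_{2j}^2$, while the arrow contribution is $\sum_{i=1}^\mu\sum_{j=1}^\nu p_{1i}p_{2j}=\big(\sum_i p_{1i}\big)\big(\sum_j p_{2j}\big)$. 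Since $\mathbf{P}_1$ is a partition of $a$ and $\mathbf{P}_2$ of $b$, the arrow contribution equals $ab$, hence
\[ \braket{\mathbf{d},\mathbf{d}}=\sum_{j=1}^\mu p_{1j}^2+\sum_{j=1}^\nu p_{2j}^2-ab, \]
so that $1-\braket{\mathbf{d},\mathbf{d}}=1-\sum_{j=1}^\mu p_{1j}^2-\sum_{j=1}^\nu p_{2j}^2+ab$, as claimed.

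If one instead wanted to establish the general fact used above directly in this case --- which is the content of \cite[Theorem 5.1]{RW} --- I would run King's GIT construction: the base-change group $G=\prod_v \mathrm{GL}_{d_v}$ (of dimension $\sum_v d_v^2$) acts on the representation space $R(Q_{\mu,\nu},\mathbf{d})=\bigoplus_{i,j}\Hom(\mathbb{C}^{p_{1i}},\mathbb{C}^{p_{2j}})$, which is an affine space and hence smooth and irreducible; the $\Theta$-stable locus is a $G$-invariant open subset, non-empty by hypothesis, hence still smooth and irreducible; and because a $\Theta$-stable representation has endomorphism ring $\mathbb{C}$, the quotient $PG=G/\mathbb{C}^*$ acts freely on this locus with geometric quotient $M_{\mathbf{d}}^{\Theta-st}(Q_{\mu,\nu})$, which is then smooth and irreducible of dimension $\dim R(Q_{\mu,\nu},\mathbf{d})-\dim PG=ab-\big(\sum_v d_v^2-1\big)=1-\braket{\mathbf{d},\mathbf{d}}$. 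There is no serious obstacle in the proposition itself: it is a bookkeeping corollary of the general quiver-moduli theory, and the only non-formal inputs --- openness of the stable locus, triviality (up to scalars) of the endomorphisms of a $\Theta$-stable representation, and freeness of the $PG$-action --- are exactly what \cite[Theorem 5.1]{RW} packages. The one point to keep an eye on is that the hypothesis must be non-emptiness of the \emph{stable} (not merely semistable) locus, since it is $\Theta$-stability that forces the endomorphism ring to be $\mathbb{C}$ and hence the quotient to be smooth.
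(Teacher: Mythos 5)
Your proposal is correct, and in fact it supplies more than the paper does: the paper offers no argument for this proposition at all, quoting it directly as \cite[Theorem 5.1]{RW}. Your derivation is exactly the intended content of that citation: the general fact recalled earlier in \S\ref{S:quiv} (that a non-empty moduli space of $\Theta$-stable representations is smooth and irreducible of dimension $1-\braket{\mathbf{d},\mathbf{d}}$) applied to $\mathbf{d}=(\mathbf{P}_1,\mathbf{P}_2)$, together with the evaluation of the Euler form of the complete bipartite quiver, where $\sum_{i,j}p_{1i}p_{2j}=\bigl(\sum_i p_{1i}\bigr)\bigl(\sum_j p_{2j}\bigr)=ab$ gives the stated formula. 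Your King-style GIT sketch of the general fact is also essentially the argument behind the cited theorem; the only point worth stating more carefully there is that smoothness and the dimension count for the quotient rest on the stable locus being a principal $PG$-bundle over the moduli space (via Luna's slice theorem in King's construction), not merely on set-theoretic freeness of the action --- and, as you note, it is genuine $\Theta$-stability (forcing $\mathrm{End}=\mathbb{C}$) rather than semistability that makes this work.
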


To make the relation with scattering diagrams we need the notion of framed quiver representations.

\begin{dfn}
\label{dfn:framed}
A front framed (resp. back framed) respresentation of the $\mu$-Kronecker quiver $Q_\mu$ is a representation of $Q_\mu$ together with the choice of $1$-dimensional subspace $L\subset V_1$ (resp. $L\subset V_2$) of the vector space corresponding to the first (resp. second) vertex of $Q_\mu$. One similarly defines framed representations of bipartite quivers $Q_{\mu,\nu}$.
\end{dfn}

\begin{dfn}
A framed representation is $\Theta$-stable if is $\Theta$-semistable with respect to all proper subrepresentations and $\Theta$-stable with respect to proper subrepresentations containing $L$.

We denote the moduli space of front framed (resp. back framed) $\Theta$-stable representations of $Q_{\mu,\nu}$ with dimension vector $(d_1,d_2)$ by $M_{(d_1,d_2)}^{\Theta-st,F}(Q_{\mu,\nu})$ (resp. $M_{(d_1,d_2)}^{\Theta-st,B}(Q_{\mu,\nu})$).
\end{dfn}

By \cite[Lemma 3.2]{Rei1} all stability conditions are equivalent to stability with respect to $\Theta$-stability for $\Theta\in\{(1,0),(1,1),(0,1)\}$. It turns out that scattering diagrams are related to $(1,0)$-stability.

\begin{prop}[\cite{RW}, Theorem 6.1]
We have
\[ f_{(a,b)}^{\mu,\nu} = (B_{(a,b)})^{\frac{\mu}{a}} = (f_{(a,b)})^{\frac{\nu}{b}}, \]
where
\begin{align*}
B_{(a,b)} &= 1+\sum_{k>0}\chi(M_{(ak,bk)}^{(1,0)-st,B}(Q_{\mu,\nu}))t^{k(a+b)}x^ay^b, \\
F_{(a,b)} &= 1+\sum_{k>0}\chi(M_{(ak,bk)}^{(1,0)-st,F}(Q_{\mu,\nu}))t^{k(a+b)}x^ay^b. \\
\end{align*}
\end{prop}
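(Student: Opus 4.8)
This is \cite[Theorem 6.1]{RW}; I describe how the argument goes. The plan is to first realise $\mathfrak D^{\mu,\nu}$ as the Kontsevich--Soibelman scattering diagram of the complete bipartite quiver $Q_{\mu,\nu}$ (with zero potential), and then to read off the framed Euler characteristics from the ray functions via Reineke's smooth-model construction (see \cite{RW}).

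For the identification, one works with the lattice $\mathbb Z^{\mu+\nu}$ of dimension vectors, collapsed to $\mathbb Z^2$ by $(d_i;e_j)\mapsto(\sum_i d_i,\sum_j e_j)$; since every source vertex is joined to every sink vertex by exactly one arrow, the antisymmetrised Euler form descends to the standard form on $\mathbb Z^2$. The initial walls of the KS diagram are the $\mu+\nu$ vertex simples: the $\mu$ source simples are rigid, all have collapsed charge $(1,0)$, and pairwise have no extensions (there are no arrows among source vertices), so the corresponding automorphisms commute and assemble into a single wall in direction $(1,0)$ with function $(1+tx)^\mu$; symmetrically the $\nu$ sink simples give $(1+ty)^\nu$ in direction $(0,1)$. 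These are precisely the initial data of $\mathfrak D^{\mu,\nu}$, so by uniqueness of consistent completions (Remark \ref{rem:unique}) the two diagrams agree. In particular the ray in a primitive direction $(a,b)$ carries $f_{(a,b)}^{\mu,\nu}=\prod_k(1+t^{k(a+b)}x^{ka}y^{kb})^{c^{\mu,\nu}_{ka,kb}}$, the exponents being the genus-zero BPS/DT invariants of $(1,0)$-semistable representations of dimension $(ka,kb)$, consistently with Corollary \ref{cor:GPS} and Proposition \ref{prop:pos}.

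For the framed count, one adjoins to $Q_{\mu,\nu}$ a framing vertex of dimension $1$ with the appropriate single arrow to each sink vertex (for $B$; from each source vertex, for $F$), so that $(1,0)$-stable back-framed (resp.\ front-framed) representations of $Q_{\mu,\nu}$ become ordinary stable representations of the extended quiver. Reineke's smooth-model theorem then identifies the series $B_{(a,b)}=1+\sum_k\chi\big(M^{(1,0)-st,B}_{(ka,kb)}(Q_{\mu,\nu})\big)\,(t^{a+b}x^ay^b)^k$ with a plethystic transform of the DT invariants along the ray $(a,b)$, hence with a rational power of $f_{(a,b)}^{\mu,\nu}$; tracking the charge of the framing arrows against the collapsed $\mathbb Z^2$ pins this power down to $\mu/a$, and symmetrically one obtains $F_{(a,b)}^{\nu/b}$. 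The smoothness of the framed and unframed moduli of the expected dimension, which makes the Euler characteristics well-defined and the series honest power series with constant term $1$, is Proposition \ref{prop:irred} together with its framed analogue.

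The conceptual half --- identifying $\mathfrak D^{\mu,\nu}$ with the quiver scattering diagram --- is the rank-two case of the Kontsevich--Soibelman formalism and is essentially bookkeeping. The genuinely delicate point, which is the content of \cite[Theorem 6.1]{RW}, is the second half: producing the exact exponents $\mu/a$ and $\nu/b$ rather than merely "some rational power", which requires a careful comparison of the charge lattice of the extended framed quiver with the collapsed $\mathbb Z^2$ and compatibility of Reineke's integration map with the change-of-lattice trick of Proposition \ref{prop:col}. One can instead bypass the quiver scattering diagram by combining the Gromov--Witten interpretation of $\log f_{(a,b)}^{\mu,\nu}$ from Corollary \ref{cor:GPS} with the Gromov--Witten/quiver correspondence of \cite{RW}, but then the same framing bookkeeping reappears as the identification of the relevant GW generating functions with the framed Euler characteristics.
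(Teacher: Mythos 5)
The paper does not prove this proposition at all: it is imported verbatim from \cite{RW} (Theorem 6.1), so there is no in-paper argument to compare yours against. Your outline is a fair reconstruction of how the cited source proceeds --- realising $\mathfrak D^{\mu,\nu}$ as the wall-crossing/scattering diagram of the complete bipartite quiver $Q_{\mu,\nu}$ (the collapse $(d;e)\mapsto(\sum_i d_i,\sum_j e_j)$ does carry the antisymmetrised Euler form to the standard form on $\mathbb Z^2$, and the source and sink simples do give the initial walls $(1+tx)^\mu$, $(1+ty)^\nu$), and then extracting the ray functions from Reineke's smooth-model (framed) moduli, with the exponents $\mu/a$, $\nu/b$ coming from the pairing of the framing charge against the primitive normal of the ray. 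You correctly read $(f_{(a,b)})^{\nu/b}$ in the statement as the typo it is, namely $(F_{(a,b)})^{\nu/b}$. Be aware, though, that what you have written is a proof sketch, not a proof: the two load-bearing steps --- the identification of the collapsed consistent completion with $\mathfrak D^{\mu,\nu}$ (which needs an argument that consistency survives the projection from the rank-$(\mu+\nu)$ charge lattice, in the spirit of Proposition \ref{prop:col}) and Reineke's integration-map/smooth-model theorem pinning the exponents --- are exactly the content of \cite{RW} and of Reineke's earlier work, and you invoke them rather than prove them. Since the paper itself treats the statement as a black box, that level of detail is acceptable here, but it should be presented as a citation trail rather than as an independent proof.
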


The relation between $f_{(a,b)}$ and the Euler characteristic of non-framed quiver representations is coming from a system of functional equations and is quite complicated in general, see \cite{RW}, Theorem 8.1 and Corollary 8.2. But for the first term, i.e., for primitive direction vectors $(a,b)$, this reduces to the following simple relation.

\begin{dfn}
Define
\[ \chi_{a,b}^{\mu,\nu} := \chi(M_{(a,b)}^{(1,0)-st}(Q_{\mu,\nu})). \]
\end{dfn}

\begin{prop}[\cite{RW}, Corollary 9.1]
\label{prop:chi}
If $(a,b)\in\mathbb{Z}_{>0}^2$ is primitive, then
\[ c_{a,b}^{\mu,\nu} = \chi_{a,b}^{\mu,\nu}. \]
\end{prop}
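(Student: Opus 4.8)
The plan is to extract the lowest‑order term of the ray function and match it against the quiver side. Write $f_{(a,b)}^{\mu,\nu}$ in its two known forms: the factorization $\prod_{k\ge 1}\bigl(1+t^{k(a+b)}x^{ka}y^{kb}\bigr)^{c_{ka,kb}^{\mu,\nu}}$ from Definition \ref{def:c}, and the expression $(B_{(a,b)})^{\mu/a}$ with $B_{(a,b)} = 1+\sum_{k\ge 1}\chi\bigl(M_{(ak,bk)}^{(1,0)-st,B}(Q_{\mu,\nu})\bigr)t^{k(a+b)}x^{ka}y^{kb}$ from the quoted theorem of \cite{RW}. First I would apply $\log$ to both and compare the coefficient of the monomial $t^{a+b}x^ay^b$, which is the unique minimal monomial occurring: it cannot be a higher power of any $t^{k(a+b)}x^{ka}y^{kb}$, so only the $k=1$ term contributes in either expansion. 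From the factorized form this coefficient is $c_{a,b}^{\mu,\nu}$; from the $(B_{(a,b)})^{\mu/a}$ form it is $\tfrac{\mu}{a}\chi\bigl(M_{(a,b)}^{(1,0)-st,B}(Q_{\mu,\nu})\bigr)$. Hence $c_{a,b}^{\mu,\nu} = \tfrac{\mu}{a}\chi\bigl(M_{(a,b)}^{(1,0)-st,B}(Q_{\mu,\nu})\bigr)$, and symmetrically $c_{a,b}^{\mu,\nu} = \tfrac{\nu}{b}\chi\bigl(M_{(a,b)}^{(1,0)-st,F}(Q_{\mu,\nu})\bigr)$ using the $F$‑presentation.

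It then remains to replace the framed Euler characteristic by the unframed one $\chi_{a,b}^{\mu,\nu}$. Here I would invoke the system of functional equations of \cite{RW} (their Theorem 8.1 / Corollary 8.2), which expresses $\log f_{(a,b)}^{\mu,\nu}$, order by order in $t$, in terms of the Euler characteristics $\chi\bigl(M_{(ka,kb)}^{(1,0)-st}(Q_{\mu,\nu})\bigr)$ of the unframed moduli, the $k$‑th coefficient receiving feedback corrections from the proper divisors of $k$. Since $(a,b)$ is primitive, at order $t^{a+b}$ (i.e.\ $k=1$) there are no proper divisors to feed back, so the functional equation degenerates to the single clean identity $c_{a,b}^{\mu,\nu}=\chi\bigl(M_{(a,b)}^{(1,0)-st}(Q_{\mu,\nu})\bigr)=\chi_{a,b}^{\mu,\nu}$. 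Primitivity is used twice: to isolate $k=1$ in the above expansions, and (via King's criterion, after a generic perturbation of the weight $(1,0)$) to ensure that on each stratum of the coproduct of Definition \ref{dfn:bipartite} semistability coincides with stability, so that the moduli are smooth projective and their topological Euler characteristics behave multiplicatively in bundles.

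The hard part will be this second step: reproving, or carefully extracting from \cite{RW}, the exact shape of the wall‑crossing/Harder--Narasimhan recursion relating the framed series $B_{(a,b)}$ (equivalently $\log f_{(a,b)}^{\mu,\nu}$) to the unframed $\chi$'s, and verifying that all lower‑order corrections genuinely vanish for a primitive class. This means fixing the framing convention for the bipartite quiver $Q_{\mu,\nu}$, understanding how a one‑vertex extension $\widehat Q_{\mu,\nu}$ and its chamber structure interpolate between the framed moduli and a projective‑space bundle over the unframed moduli, and controlling the dimension‑vector bookkeeping through Definition \ref{dfn:bipartite}. As a cross‑check, matching the factorized form of $f_{(a,b)}^{\mu,\nu}$ against Corollary \ref{cor:GPS} at order $k=1$ gives $c_{a,b}^{\mu,\nu}=N_{a,b}^{\mu,\nu}$, so for primitive $(a,b)$ the claim is equivalent to the Gromov--Witten/quiver identity $\sum_{\vec p_1,\vec p_2}\binom{\mu}{\ell(\vec p_1)}\binom{\nu}{\ell(\vec p_2)}N_{\vec p_1,\vec p_2}=\chi_{a,b}^{\mu,\nu}$, which one could alternatively attempt to establish stratum by stratum.
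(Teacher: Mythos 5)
This proposition is quoted in the paper from \cite{RW} (Corollary 9.1) without an independent proof, the surrounding text only noting that the general framed/unframed relation of \cite{RW} (Theorem 8.1, Corollary 8.2) collapses to this identity at first order for primitive $(a,b)$. Your proposal follows essentially that same route --- extracting the $k=1$ coefficient from the framed formula of \cite{RW}, Theorem 6.1 and then deferring the framed-to-unframed comparison to the functional equations of \cite{RW} --- so it matches the paper's (citation-based) treatment, with the genuinely hard step delegated to the same source.
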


\subsection{Mutations}
\label{S:mut}

\begin{dfn}
For $\mu, \nu \in \mathbb Z_{> 0}$ define two \emph{mutation} actions on $\mathbb{Z}^2$ by
\begin{align*}
\mathbf T_1^{\mu,\nu} : (a,b) \mapsto \begin{cases} (\mu b - a, b), & b > 0, \\ (a,b), & b \leq 0, \end{cases} \\
\mathbf T_2^{\mu,\nu} : (a,b) \mapsto \begin{cases} (a, \nu a - b), & a > 0, \\ (a,b), & a \leq 0. \end{cases}
\end{align*}
Here $\mathbb{Z}^2$ will be the space of direction vectors $(a,b)$ of rays in a scattering diagram, and $(a,b)$ will usually be assumed to be primitive, i.e. $\text{gcd}(a,b)=1$, and such that $(a,b)$, $\mathbf{T}_1^{\mu,\nu}(a,b)$ and $\mathbf{T}_2^{\mu,\nu}(a,b)$ are all contained in the first quadrant $\mathbb{Z}_{\geq 0}^2$.
\end{dfn}

\begin{prop}[{\cite[Theorem 1.24]{GHKK}}]
We have
\[ f_{m}^{\mu,\nu}=f_{\mathbf T_1^{\mu,\nu}(m)}^{\mu,\nu}=f_{\mathbf T_2^{\mu,\nu}(m)}^{\mu,\nu}. \]
\end{prop}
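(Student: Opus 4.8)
The plan is to realise the combinatorial mutation $\mathbf{T}_1^{\mu,\nu}$ (the case of $\mathbf{T}_2^{\mu,\nu}$ being symmetric under interchanging $x$ and $y$) as coming from a change of chart for the scattering diagram, and to use that the function attached to a wall is intrinsic to that wall, independent of the chart in which it is described.

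Concretely, attached to the initial line $\mathfrak{d}_1=(\mathbb{R}(1,0),(1+tx)^\mu)$ there is a piecewise-linear homeomorphism $T_1$ of $M_{\mathbb{R}}$, unimodular on each of the closed half-planes $\{b\geq 0\}$ and $\{b\leq 0\}$, equal on $\{b\geq 0\}$ to the linear reflection $(a,b)\mapsto(\mu b-a,b)$; it fixes $\underline{\mathfrak{d}}_1$ as a set and, on the open upper half-plane, coincides with $\mathbf{T}_1^{\mu,\nu}$, which covers every primitive $m\neq(1,0)$ with both $m$ and $\mathbf{T}_1^{\mu,\nu}(m)$ in the first quadrant (the direction $(1,0)$ being fixed trivially). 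This $T_1$ is the piecewise-linear part of a mutation operation $\widehat{T}_1$ on scattering diagrams: push every wall forward along $T_1$, keeping the exponents in its factorised function while substituting $z^{m_0}\mapsto z^{T_1(m_0)}$, and correct the functions on one side of $\mathfrak{d}_1$ by the elementary wall-crossing $\theta_{\mathfrak{d}_1}$ to absorb the non-smoothness of $T_1$ there --- it is this correction that makes $\widehat{T}_1$ a genuine mutation rather than a bare linear reparametrisation. Since $\widehat{T}_1$ is built from a lattice automorphism on each half-plane together with the single automorphism $\theta_{\mathfrak{d}_1}$, it takes $\gamma$-ordered products to $\gamma$-ordered products, hence sends consistent diagrams to consistent diagrams and respects equivalence.

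The main point is then that $\widehat{T}_1$ sends the standard seed $\mathfrak{D}_0^{\mu,\nu}$ to (a representative of) a standard seed of the same type: the line $\mathfrak{d}_1$ is preserved, while $\mathfrak{d}_2=(\mathbb{R}(0,1),(1+ty)^\nu)$ is bent into the rays $\mathbb{R}_{\geq 0}(\mu,1)$ and $\mathbb{R}_{\geq 0}(0,-1)$, and one checks --- using consistency and the specific rank-two data $(\mu,\nu)$ --- that after the $\theta_{\mathfrak{d}_1}$-correction the scattering of this configuration agrees, up to equivalence and the relabelling $T_1$, with $\mathfrak{D}^{\mu,\nu}$ itself. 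Granting this, $\widehat{T}_1(\mathfrak{D}^{\mu,\nu})$ is, after undoing the relabelling, a consistent completion of $\mathfrak{D}_0^{\mu,\nu}$, so by Remark~\ref{rem:unique} it is $\mathfrak{D}^{\mu,\nu}$ up to equivalence; tracking the wall in direction $m$ through $\widehat{T}_1$ shows that it becomes the wall in direction $T_1(m)$ with the same function, i.e. $f^{\mu,\nu}_{\mathbf{T}_1^{\mu,\nu}(m)}=f^{\mu,\nu}_m$. The statement for $\mathbf{T}_2^{\mu,\nu}$ then follows by the $x\leftrightarrow y$ symmetry.

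The step I expect to be the main obstacle is this seed-periodicity check: verifying that the bent-and-corrected configuration really scatters back to $\mathfrak{D}^{\mu,\nu}$. Unwinding it, one must show that the single wall-crossing $\theta_{\mathfrak{d}_1}$ can be commuted past the entire first-quadrant portion of the consistency relation in precisely the pattern dictated by the reflection $(a,b)\mapsto(\mu b-a,b)$. This is a computation with the ordered product of wall-crossings around the origin, complicated by the fact that this reflection does not preserve the monomial grading $t^{a+b}x^ay^b$, so one must argue $t$-adically and work with the completed coefficient rings throughout; it is cleanest to organise it as an induction on $t$-order, transporting each scattering collision through $T_1$ and checking that the rays produced on the two sides agree. (For primitive $m$ only, one could instead obtain $c_m^{\mu,\nu}=c_{\mathbf{T}_1^{\mu,\nu}(m)}^{\mu,\nu}$ from Proposition~\ref{prop:chi} via reflection functors on the bipartite quiver $Q_{\mu,\nu}$, although that does not by itself give equality of the full series $f_m^{\mu,\nu}$.) All remaining steps --- consistency-preservation of $\widehat{T}_1$ and the reduction via Remark~\ref{rem:unique} --- are formal.
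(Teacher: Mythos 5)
The paper does not actually prove this proposition: it is imported wholesale from \cite[Theorem 1.24]{GHKK}, with the surrounding text only surveying alternative routes (reflection functors for $\mu=\nu$, Gromov--Witten/birational arguments, and GHKK's direct scattering-diagram proof). Your sketch follows the last of these routes, which is the right idea, but as written it is an outline of GHKK's argument rather than a proof. The crux --- that pushing every wall forward by the piecewise-linear map and correcting on one side of $\mathfrak{d}_1$ by $\theta_{\mathfrak{d}_1}$ produces a consistent diagram whose initial data is again (equivalent to) $\mathfrak{D}_0^{\mu,\nu}$, so that Remark \ref{rem:unique} applies --- is exactly the content of the cited theorem, and you leave it at ``one checks'' and then explicitly flag it as the main unresolved obstacle. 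In particular the step you call formal, namely that $\widehat T_1$ intertwines $\gamma$-ordered products and hence preserves consistency, is not formal: the linear piece on the upper half-plane, $(a,b)\mapsto(\mu b-a,b)$, has determinant $-1$, so transported loops are traversed with reversed orientation there, the crossing order and the signs $\langle m,n_{\mathfrak d}\rangle$ change, and the function attached to the boundary wall $\mathfrak{d}_1$ itself must be reattached (``reversed'') in a specific way for the corrected products to close up. Verifying this is precisely the induction on $t$-order that GHKK carry out, and without it the identity $f^{\mu,\nu}_m=f^{\mu,\nu}_{\mathbf T_1^{\mu,\nu}(m)}$ has not been established.

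Two smaller points. First, the map you describe is not well defined as a homeomorphism: the reflection on $\{b\ge 0\}$ sends $(a,0)\mapsto(-a,0)$, so the piece on $\{b\le 0\}$ cannot be the identity (which your statement that $\mathfrak{d}_2$ is bent into $\mathbb{R}_{\ge 0}(\mu,1)\cup\mathbb{R}_{\ge 0}(0,-1)$ suggests you intend); it must be $(a,b)\mapsto(-a,b)$, i.e.\ the GHKK shear composed with a global flip. This is fixable but should be stated, since the transformation rule for wall functions depends on which linear piece is applied. Second, the parenthetical fallback via Proposition \ref{prop:chi} and reflection functors on $Q_{\mu,\nu}$ is exactly the bipartite-quiver argument the paper notes has not been worked out for $\mu\neq\nu$, so it cannot be invoked as a known alternative, and in any case it would only give the primitive coefficients, not the full series.
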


In the case $\mu=\nu$ this follows from the correspondence between scattering diagrams and quiver representations (\cite{Rei2}, see also \cite[Theorem 1]{GP}) and reflection functors for quivers (\cite{BGP}, see also \cite[\S5.3]{GP}). For $\mu \neq \nu$ there might be a similar proof using bipartite quivers (\cite{RW}). The statement also has also been proved for general $\mu,\nu$ using the relation with Gromov-Witten invariants (\cite[Theorem 5.4]{GPS}) and equivalence of log Calabi-Yau pairs under certain birational transformations (\cite[Theorem 7]{GP}). Note that \cite[Theorem 7]{GP}, is only stated in the case $a,b>0$, but it easily generalizes to $a=0$ or $b=0$ by considering classes of a fiber minus an exceptional line. In \cite{GHKK} the statement was proved directly from the definition of scattering diagrams, without reference to quivers or Gromov-Witten invariants.

\begin{dfn}
The mutations naturally act on $\mathbb{Q}_{>0}$, where $\rho=\frac{b}{a}\in\mathbb{Q}_{>0}$ can be thought of as the slope of the direction vector $(a,b)\in\mathbb{Z}_{>0}^2$. The action is given by
\begin{align*}
T_1(\rho) &= (\mu - \tfrac{1}{\rho})^{-1} , \\
T_2(\rho) &= \nu - \rho .
\end{align*}
\end{dfn}

\begin{lem}
\label{lem:mutslope}
$T_1$ and $T_2$ are order reversing (or strictly decreasing) for $\frac{1}{\mu} < \rho < \nu$ and have fixed points $\rho_{0,-}^{\mu,\nu}=\frac{2}{\mu}$ and $\rho_{0,+}^{\mu,\nu}=\frac{\nu}{2}$, respectively.
\end{lem}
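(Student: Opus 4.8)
The plan is to reduce both assertions to elementary one-variable computations, the only real subtlety being the bookkeeping that explains the hypothesis $\tfrac1\mu<\rho<\nu$. First I would rewrite
\[ T_1(\rho)=\Bigl(\mu-\tfrac1\rho\Bigr)^{-1}=\frac{\rho}{\mu\rho-1}, \]
which is defined and positive-valued exactly when $\rho>\tfrac1\mu$ --- this is the reason for the lower bound --- and note that $T_2(\rho)=\nu-\rho$ is positive-valued exactly when $\rho<\nu$, which is the reason for the upper bound. On $(\tfrac1\mu,\nu)$ both maps are then self-maps of $\mathbb{Q}_{>0}$; one can further check that $T_1$ and $T_2$ map $(\tfrac1\mu,\nu)$ into $\mathbb{Q}_{>0}$, but for what the lemma asserts only the formulas themselves are needed.

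For the monotonicity I would differentiate: by the quotient rule
\[ T_1'(\rho)=\frac{(\mu\rho-1)-\mu\rho}{(\mu\rho-1)^2}=\frac{-1}{(\mu\rho-1)^2}<0 \]
for all $\rho\neq\tfrac1\mu$, so $T_1$ is strictly decreasing on $(\tfrac1\mu,\infty)\supset(\tfrac1\mu,\nu)$, while $T_2'(\rho)=-1<0$ shows $T_2$ is strictly decreasing everywhere; restricting to $\mathbb{Q}_{>0}$ gives that both are order reversing there. (If one prefers to avoid calculus, $T_1$ is the composite of the three strictly decreasing maps $\rho\mapsto\tfrac1\rho$, $u\mapsto\mu-u$ and $v\mapsto\tfrac1v$ on the relevant positive domains, hence strictly decreasing.)

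For the fixed points I would simply solve $T_i(\rho)=\rho$. From $\tfrac{\rho}{\mu\rho-1}=\rho$ and $\rho\neq0$ we get $1=\mu\rho-1$, i.e.\ $\rho=\tfrac2\mu=\rho_{0,-}^{\mu,\nu}$, and from $\nu-\rho=\rho$ we get $\rho=\tfrac\nu2=\rho_{0,+}^{\mu,\nu}$. Finally I would record that both fixed points lie in $(\tfrac1\mu,\nu)$ precisely when $\mu\nu>2$ (the only range in which this interval matters for Theorem~\ref{thm:main}), since $\tfrac1\mu<\tfrac2\mu$ and $\tfrac\nu2<\nu$ hold automatically while $\tfrac2\mu<\nu$ and $\tfrac1\mu<\tfrac\nu2$ are each equivalent to $\mu\nu>2$. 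There is no genuine difficulty in the proof; the step most worth stating carefully is the domain analysis of the first paragraph, which is exactly what pins down the hypothesis of the lemma.
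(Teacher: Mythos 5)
Your proposal is correct and matches the paper's approach: the paper dismisses the lemma with ``clear from the definition,'' and your computation (rewriting $T_1(\rho)=\rho/(\mu\rho-1)$, checking $T_1'<0$, $T_2'=-1$, and solving $T_i(\rho)=\rho$ to get $2/\mu$ and $\nu/2$) is exactly the routine verification being taken for granted there. The extra remark about the fixed points lying in $(\tfrac1\mu,\nu)$ when $\mu\nu>2$ is harmless and not needed for the statement.
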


\begin{proof}
This is clear from the definition.
\end{proof}

\section{The dense region}

Consider a standard scattering diagram $D^{\mu,\nu}$ (Definition \ref{def:standard}). Mutations (\S\ref{S:mut}) act on the directions $\mathbb{Z}^2$ (or slopes $\mathbb{Q}$). They have some fixed points and naturally divide the scattering diagram into certain \textit{regions}. We will show the following. For $\mu\nu>4$ there is a \textit{dense region} $\Phi$ in which every slope occurs with non-trivial function (Theorem \ref{thm:full}). It is made up of an infinite number of fundamental domains $\phi_k$  (for the mutation action). Outside of the dense region there is a discrete number of rays and each of them appears with coefficients $\mu$ or $\nu$, because they come from mutation of the initial rays (Proposition \ref{prop:outside}). All rays produced from scattering have slope $\frac{1}{\mu} < \rho < \nu$ (Proposition \ref{prop:bound}). The situation is summarized in Figure \ref{fig:regions}. The slopes $\rho_{0,\pm}^{\mu,\nu}$ are defined in Lemma \ref{lem:mutslope}, the slopes $\rho_{\pm}^{\mu,\nu}$ will be defined in Definition \ref{def:dense}.

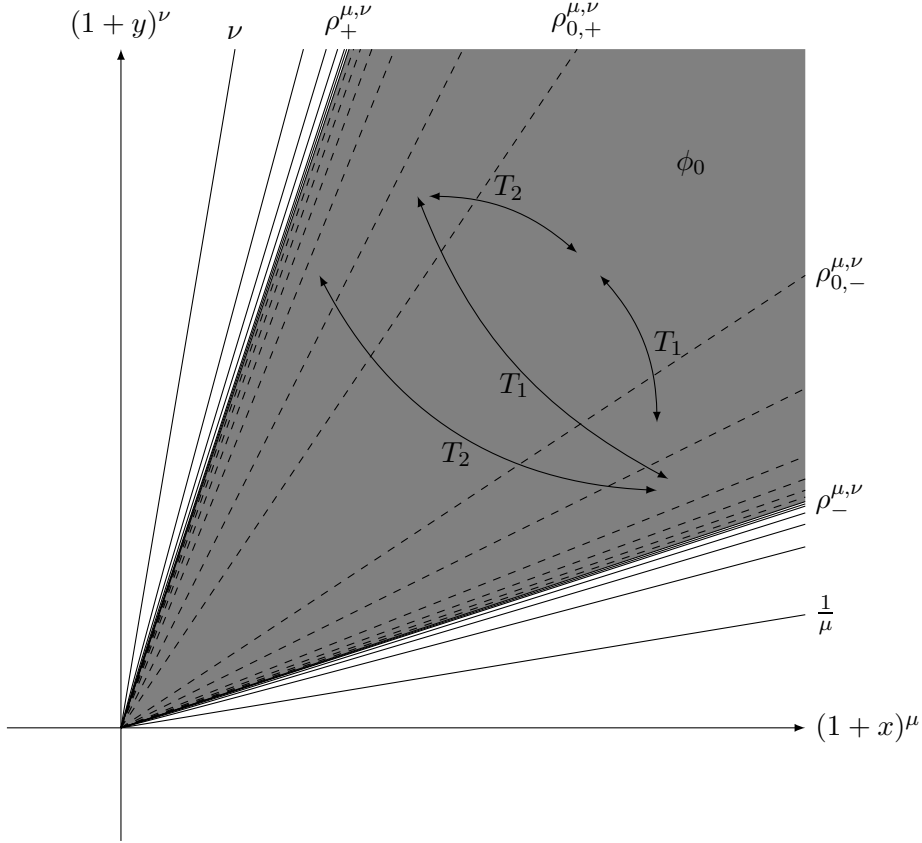
\begin{figure}[h!]
\centering
\begin{tikzpicture}[scale=3]
\fill[color=gray] (0,0) -- (1,3) -- (3,3) -- (3,1) -- (0,0);
\draw[->] (-1/2,0) -- (3,0) node[right]{$(1+x)^\mu$};
\draw[->] (0,-1/2) -- (0,3) node[above]{$(1+y)^\nu$};
\draw (0,0) -- (3,.5) node[right]{$\frac{1}{\mu}$};
\draw (0,0) -- (3,.8);
\draw (0,0) -- (3,.9);
\draw (0,0) -- (3,.95);
\draw (0,0) -- (3,.98);
\draw (0,0) -- (3,.99);
\draw (0,0) -- (.5,3) node[above]{$\nu$};
\draw (0,0) -- (.8,3);
\draw (0,0) -- (.9,3);
\draw (0,0) -- (.95,3);
\draw (0,0) -- (.98,3);
\draw (0,0) -- (.99,3);
\draw[dashed] (0,0) -- (3,2) node[right]{$\rho_{0,-}^{\mu,\nu}$};
\draw[dashed] (0,0) -- (3,1.5);
\draw[dashed] (0,0) -- (3,1.2);
\draw[dashed] (0,0) -- (3,1.1);
\draw[dashed] (0,0) -- (3,1.05);
\draw[dashed] (0,0) -- (3,1.02);
\draw (0,0) -- (3,1) node[right]{$\rho_-^{\mu,\nu}$};
\draw[dashed] (0,0) -- (2,3) node[above]{$\rho_{0,+}^{\mu,\nu}$};
\draw[dashed] (0,0) -- (1.5,3);
\draw[dashed] (0,0) -- (1.2,3);
\draw[dashed] (0,0) -- (1.1,3);
\draw[dashed] (0,0) -- (1.05,3);
\draw[dashed] (0,0) -- (1.02,3);
\draw (0,0) -- (1,3) node[above]{$\rho_+^{\mu,\nu}$};
\draw (2.5, 2.5) node{\(\phi_0\)};
\draw[<->] (2,2.1) to[bend right=20] node[above,midway]{$T_2$} (1.35,2.35);
\draw[<->] (2.1,2) to[bend left=20] node[right,midway]{$T_1$} (2.35,1.35);
\draw[<->] (1.3,2.35) to[bend right=20] node[below,midway]{$T_1$} (2.4,1.1);
\draw[<->] (2.35,1.05) to[bend left=30] node[below,midway]{$T_2$} (0.87, 2);
\end{tikzpicture}
\caption{The regions of the scattering diagram.}
\label{fig:regions}
\end{figure}

\begin{dfn}
For a ray in direction \((a, b)\) lying strictly in the first quadrant, the \emph{slope} of the ray is \(\rho = b/a\).
\end{dfn}

\begin{prop}
\label{prop:bound}
In a standard scattering diagram \(\mathfrak D^{\mu, \nu}\), every ray with direction \((a, b)\in\mathbb{Z}_{>0}^2\) satisfies \[\frac{1}{\mu} \leq \frac{b}{a} \leq \nu .\]
\end{prop}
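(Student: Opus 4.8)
The plan is to pass to the \emph{full deformation} of $\mathfrak D^{\mu,\nu}$: replace the two initial lines by $\mu$ generic horizontal lines with function $1+tx$ and $\nu$ generic vertical lines with function $1+ty$. Since the asymptotic diagram of the resulting consistent diagram is $\mathfrak D^{\mu,\nu}$, it suffices to bound the slopes of the rays produced there. By the symmetry $x\leftrightarrow y$, $\mu\leftrightarrow\nu$ (which turns $\mathfrak D^{\mu,\nu}$ into $\mathfrak D^{\nu,\mu}$ and sends a ray of slope $\rho$ to one of slope $1/\rho$), it is enough to prove the single inequality $\tfrac ba\le\nu$ for every standard diagram; applying this to $\mathfrak D^{\nu,\mu}$ then gives the lower bound $\tfrac ba\ge\tfrac1\mu$.

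Choosing the deformation generically we may assume every singularity lies on exactly two walls, so each ray $\mathfrak d$ of the deformed diagram is born at a unique singularity from two ``parent'' walls, and thus has a well-defined \emph{ancestry tree} whose leaves are initial lines. Let $v(\mathfrak d)$ be the number of distinct vertical lines occurring as leaves of this tree; trivially $v(\mathfrak d)\le\nu$. I would prove, by induction on birth order, that
\[ \operatorname{slope}(\mathfrak d)\le v(\mathfrak d)\qquad\text{for every ray }\mathfrak d, \]
which gives the proposition. The induction rests on the following facts about the rays produced at a single singularity where walls of directions $m_1,m_2$ meet: every such ray has direction $\alpha m_1+\beta m_2$ with $\alpha,\beta\in\mathbb Z_{\ge0}$ (it lies in the cone spanned by $m_1,m_2$), and moreover
\begin{enumerate}[(a)]
\item if both walls are non-axis rays, then $\operatorname{slope}\le\max(\operatorname{slope}(m_1),\operatorname{slope}(m_2))$;
\item if one wall is a horizontal line ($m_2=(1,0)$), then $\operatorname{slope}\le\operatorname{slope}(m_1)$;
\item if one wall is a vertical line ($m_2=(0,1)$) and $m_1=(a_1,b_1)$, then $\operatorname{slope}\le\operatorname{slope}(m_1)+1$.
\end{enumerate}
Here (a) and (b) are immediate, since a slope of $\alpha m_1+\beta m_2$ with $\alpha,\beta\ge0$ is a weighted mediant of the slopes of $m_1$ and $m_2$ (with $(1,0)$ contributing slope $0$). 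Statement (c) is the heart of the matter: writing the produced ray as $\alpha m_1+\beta(0,1)$, it asserts $\beta\le a_1\alpha$, i.e. that crossing the ``small'' line $1+ty$ raises the slope by at most $1$. I would obtain this by applying the change-of-lattice reasoning of Proposition~\ref{prop:col} to the local two-wall scattering at the singularity: passing to the sublattice $\langle m_1,(0,1)\rangle$ (of index $a_1$, as $m_1$ is primitive), the wall $1+ty$ acquires exponent $a_1$, so the local picture becomes a standard diagram with ``$\nu$-exponent'' equal to $a_1$, and $\beta\le a_1\alpha$ is exactly the upper-bound half of Proposition~\ref{prop:bound} for that diagram. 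To keep this non-circular one runs everything as one induction on the $t$-order of the ray in question: the produced ray $\alpha m_1+\beta(0,1)$ has $t$-order $\alpha(a_1+b_1)+\beta$, which strictly exceeds the $t$-order $\alpha+\beta$ of the corresponding ray of the auxiliary diagram because $a_1+b_1\ge2$ for a non-axis ray.

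With (a)--(c) available the induction is routine bookkeeping. If $\mathfrak d=(1,1)$ is born from a horizontal/vertical crossing, then $\operatorname{slope}(\mathfrak d)=1=v(\mathfrak d)$. If $\mathfrak d$ is born from two non-axis rays, or from a horizontal line and a ray $W_1$, then (a) resp. (b) together with the inductive hypothesis give $\operatorname{slope}(\mathfrak d)\le\max_i v(W_i)\le v(\mathfrak d)$, since each parent's ancestry tree is a subtree of that of $\mathfrak d$. The delicate case is $\mathfrak d$ born from a vertical line $V_j$ and a ray $W_1$; here the key geometric observation is that $V_j$ does \emph{not} occur in the ancestry tree of $W_1$. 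Indeed, any ray whose ancestry involves $V_j$ is born at, and hence contained in, the half-plane $\{x\ge v_j\}$ (every non-axis ray has positive $x$-component), so it can meet the line $V_j=\{x=v_j\}$ only at its own base point, whereas $W_1$ meets $V_j$ transversally at the distinct point where $\mathfrak d$ is born. Therefore $v(\mathfrak d)=v(W_1)+1$, and (c) with the inductive hypothesis yields $\operatorname{slope}(\mathfrak d)\le\operatorname{slope}(W_1)+1\le v(W_1)+1=v(\mathfrak d)$, completing the induction. The main obstacle is precisely step (c) — controlling how much crossing a $1+ty$-line can raise a slope — and organizing the argument so that the appeal there to Proposition~\ref{prop:bound} for the auxiliary standard diagram is legitimate (in particular, extending the change-of-lattice reduction to walls whose function is an arbitrary product, not a single power).
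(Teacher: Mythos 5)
Your proposal is correct in substance and its engine is the same as the paper's: at a collision of a ray of primitive direction $(a_1,b_1)$ with a pulled-out vertical line carrying $1+ty$, the change-of-lattice trick (Proposition~\ref{prop:col}, Example~\ref{ex:det}) identifies the local model with a standard diagram whose ``$\nu$-parameter'' is $a_1$, so the produced directions $\alpha(a_1,b_1)+\beta(0,1)$ satisfy $\beta\le a_1\alpha$ and the slope rises by at most $1$; non-circularity is handled by inducting on the order of the ray, exactly as in the paper (the paper phrases this as $\alpha+\beta<a+b$ inside an induction on $a+b$). Where you differ is the bookkeeping: the paper peels off a single vertical line (partial deformation to $\mathfrak D^{\mu,\nu-1}$ and $\mathfrak D^{\mu,1}$) and runs a double induction on $a+b$ and then $\mu+\nu$, so the parent ray automatically has slope $\le\nu-1$ and no ancestry analysis is needed; you instead take the full deformation and prove $\operatorname{slope}(\mathfrak d)\le v(\mathfrak d)\le\nu$, which costs you the genericity hypothesis (simple singularities, well-defined parents) and the separation lemma that the vertical line $V_j$ cannot occur in the ancestry of the other parent $W_1$ --- your half-plane argument for that is fine, but it is extra machinery the paper's peeling avoids. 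Conversely, your version gives the pleasant statement ``each distinct vertical line raises the slope by at most $1$,'' and reduces the lower bound to the upper bound by the $x\leftrightarrow y$ symmetry, which is legitimate. The obstacle you flag in step (c) --- that Proposition~\ref{prop:col} is stated for single-power initial functions while $W_1$ carries a product $\prod_k(1+t^{k(a_1+b_1)}x^{ka_1}y^{kb_1})^{c_k}$ --- is real but routine: either deform $W_1$ further into parallel walls each carrying one factor (for the factor in direction $km_1$ the index is $ka_1$ and the same computation gives slope increase $\le\beta'/(\alpha' k a_1)\le 1$), or observe that only the index of $\langle m_1,(0,1)\rangle$ in $M$ enters; note the paper's own proof glosses this identical point when it invokes $\mathfrak D^{a_0c^{\mu,\nu}_{a_0,b_0},a_0}$, so you have not missed anything the paper supplies.
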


\begin{proof}
We proceed by induction on \(a + b\), and then on \(\mu + \nu\). Certainly \((a, b) = (1, 1)\) satisfies this condition in all standard scattering diagrams, and when \((\mu, \nu) = (1, 1)\), this is the only ray.

We may assume \(\nu > 1\). Consider a partial deformation of $\mathfrak D^{\mu, \nu}$ to $D^{\mu,\nu-1}$ and $D^{\mu,1}$, as in Figure \ref{fig:pdef31}. By induction, all rays from the \(\mathfrak D^{\mu, 1}\) subdiagram satisfy the condition, so consider some ray from the \(\mathfrak D^{\mu, \nu - 1}\) subdiagram in the direction \((a_0, b_0)\) striking the vertical line to produce a ray in the direction \((a, b)\). From the change of lattice trick, this scattering corresponds to \(\mathfrak D^{a_0 c_{a_0, b_0}^{\mu, \nu}, a_0}\) (see Example \ref{ex:det}), and \((a, b) = \alpha (a_0, b_0) + \beta (0, 1)\). As all rays occur in the first quadrant, \(a_0 + b_0, \alpha + \beta < a + b\), so we use the induction hypothesis to obtain \(\tfrac{1}{\mu} \leq  \tfrac{b_0}{a_0} \leq \nu - 1\) and \( \frac{\beta}{\alpha} \leq a_0\). So \[\frac{1}{\mu} \leq \frac{b_0}{a_0} \leq \frac{b}{a} = \frac{\alpha b_0 + \beta}{\alpha a_0} = \frac{b_0}{a_0} + \frac{\beta}{\alpha a_0} \leq \nu - 1 + \frac{a_0}{a_0} = \nu\] as claimed. If \(\nu = 1\) we either have the base case \((\mu, \nu) = (1, 1)\), or \(\mu > 1\) and we can do the same with a deformation in the other direction to get the result, using that all further scattering between rays is contained between those rays.
\end{proof}

In \(\mathfrak D^{\mu, \nu}\), there are no rays with slope \(0 < \rho < \tfrac{1}{\mu}\) or \(\nu < \rho < \infty\), and the mutations \(T_1^{\mu,\nu}, T_2^{\mu,\nu}\) are order-reversing. Therefore there are no rays with slope \(\nu - \tfrac{1}{\mu} < \rho < \nu\) or \(\tfrac{1}{\mu} < \rho < \tfrac{\nu}{\mu \nu - 1}\). Repeating this we can find a region for which rays cannot be dense outside of, i.e. any dense behaviour must occur inside a ``dense region".

\begin{dfn}
\label{def:dense}
In a standard scattering diagram \(\mathfrak D^{\mu, \nu}\) where \(\mu \nu > 4\), the \emph{dense region} \(\Phi^{\mu, \nu}\) is the cone spanned by the rays from the origin with slope \[\rho_\pm^{\mu, \nu} = \frac{\mu \nu \pm \sqrt{\mu \nu (\mu \nu - 4)}}{2 \mu} = \frac{2 \nu}{\mu \nu \mp \sqrt{\mu \nu (\mu \nu - 4)}} .\] A ray with slope \(\rho\) is in the dense region if \(\rho_-^{\mu, \nu} < \rho < \rho_+^{\mu, \nu}\).
\end{dfn}

\begin{dfn}
\label{def:full}
A cone $\Phi\subset\mathbb{R}_{>0}^2$ is \emph{full} in $\mathfrak{D}^{\mu,\nu}$ if $c_{a,b}^{\mu,\nu}\neq 0$ for every $(a,b)\in\mathbb{Z}_{>0}^2$ (not necessarily primitive) such that the ray in direction $(a,b)$ lies in $\Phi$.
\end{dfn}

In this section we will prove the following statement by induction.

\begin{thm}[Theorem \ref{thm:main}(a)]
\label{thm:full}
\(\Phi^{\mu, \nu}\) is full in $\mathfrak{D}^{\mu,\nu}$ (and in particular dense with rays) when $\mu\nu>4$.
\end{thm}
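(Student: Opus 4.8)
The plan is to prove fullness of $\Phi^{\mu,\nu}$ by induction, using the mutation action to reduce an arbitrary direction in the dense region to a single fundamental domain, and then to establish fullness of that fundamental domain by a ``perturbation'' / self-similarity argument. First I would observe that, by Lemma \ref{lem:mutslope}, the slopes $\rho_{\pm}^{\mu,\nu}$ are precisely the fixed points of the mutation maps $T_1$ and $T_2$ acting on $\mathbb{Q}_{>0}$ (one checks that $\rho_{\pm}$ solve $\mu\rho^2 - \mu\nu\rho + \nu = 0$, which is exactly the fixed-point equation for both $T_1(\rho)=(\mu-\tfrac1\rho)^{-1}$ and $T_2(\rho)=\nu-\rho$ after a short computation). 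Hence the open cone $\Phi^{\mu,\nu}$ is preserved by both mutations, and since $T_1,T_2$ are order-reversing on this cone with no common fixed point in the interior, the group they generate acts on the set of rays in $\Phi^{\mu,\nu}$ with a fundamental domain $\phi_0$ bounded by the two ``central'' fixed slopes $\rho_{0,\pm}^{\mu,\nu}=\tfrac2\mu,\tfrac\nu2$ (the region labelled $\phi_0$ in Figure \ref{fig:regions}), and every ray in $\Phi^{\mu,\nu}$ is carried into $\overline{\phi_0}$ by some word in $T_1,T_2$. Since the mutation propositions (\cite[Theorem 1.24]{GHKK}) give $f_m^{\mu,\nu}=f_{T_i^{\mu,\nu}(m)}^{\mu,\nu}$, and in particular $c_{a,b}^{\mu,\nu}\neq 0$ if and only if $c_{T_i(a,b)}^{\mu,\nu}\neq 0$, it suffices to prove $c_{a,b}^{\mu,\nu}\neq 0$ for all $(a,b)\in\mathbb{Z}_{>0}^2$ (primitive or not) whose slope lies in the closed fundamental domain $[\tfrac2\mu,\tfrac\nu2]$.

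For the fundamental domain I would argue by induction on $a+b$, using the monotonicity Lemma (if $\mu\le\mu'$, $\nu\le\nu'$ then $c_{a,b}^{\mu,\nu}\le c_{a,b}^{\mu',\nu'}$) to reduce to the smallest $\mu,\nu$ with $\mu\nu>4$, and then exploiting a partial deformation. Concretely, deform $\mathfrak{D}^{\mu,\nu}$ by pulling out one vertical factor, producing a subdiagram $\mathfrak{D}^{\mu,\nu-1}$ together with a single extra vertical line $1+ty$; any ray of $\mathfrak{D}^{\mu,\nu-1}$ in direction $(a_0,b_0)$ striking that extra line creates, via the change-of-lattice trick (Example \ref{ex:det}), a whole family of rays in directions $\alpha(a_0,b_0)+\beta(0,1)$ governed by the standard diagram $\mathfrak{D}^{a_0 c_{a_0,b_0}^{\mu,\nu},\,a_0}$, and positivity of all coefficients (Proposition \ref{prop:pos}) guarantees these contributions never cancel. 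The key numerical point is that if $(a_0,b_0)$ already lies in the dense region for the ``larger'' parameters, then the image cone of directions it generates sweeps out an interval of slopes that, together with the analogous construction from the horizontal deformation, covers the whole interval $[\rho_-^{\mu,\nu},\rho_+^{\mu,\nu}]$ down to the relevant $t$-order; combined with the mutation reduction this forces every lattice direction in $\Phi^{\mu,\nu}$ to carry a nonzero coefficient. One feeds the inductive hypothesis (fullness of the dense region for parameters with smaller $\mu+\nu$, and for directions with smaller $a+b$) into this to close the loop.

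The main obstacle I expect is the bookkeeping that makes the ``covering'' step actually work: one must verify that the slopes $\rho_{\pm}^{\mu,\nu-1}$ of the dense region of the subdiagram, after being spread out by the secondary scattering $\mathfrak{D}^{a_0 c_{a_0,b_0},a_0}$ and re-intersected with the extra line, genuinely fill a neighbourhood of the target slope rather than leaving gaps — i.e. that the images of the (already dense) set of subdiagram slopes under the change-of-lattice maps are themselves dense in $(\rho_-^{\mu,\nu},\rho_+^{\mu,\nu})$, with no accumulation only at the boundary. Equivalently one must show the fundamental domain $\phi_0$ gets ``hit'' at every lattice point, which amounts to a careful comparison of the two nested dense regions and the continued-fraction-like dynamics of $T_1,T_2$ near their attracting/repelling fixed points $\rho_{0,\pm}$ and $\rho_{\pm}$. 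The rest — the fixed-point computation, the cone-invariance, and the reduction to $\phi_0$ — is routine once the quadratic $\mu\rho^2-\mu\nu\rho+\nu=0$ is identified as the common theme.
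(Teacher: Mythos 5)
Your first step (using the mutations to reduce fullness of $\Phi^{\mu,\nu}$ to fullness of the fundamental domain between slopes $2/\mu$ and $\nu/2$) is essentially the paper's Lemma \ref{lem:phiPhi}, but note a misstatement: $\mu\rho^2-\mu\nu\rho+\nu=0$ is the fixed-point equation of the \emph{composition} $T_2\circ T_1$, not of $T_1$ and $T_2$ separately — those fix $2/\mu$ and $\nu/2$ respectively (Lemma \ref{lem:mutslope}). What actually makes the reduction work is that $T_1(2/\mu)=2/\mu$ and $T_2(\nu/2)=\nu/2$ force the successive images of $\phi_0$ to abut without gaps, while they accumulate only at $\rho_\pm^{\mu,\nu}$; this is recoverable from what you wrote, so the first half is fine in spirit.

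The genuine gap is in your treatment of the fundamental domain, and you partly acknowledge it yourself: your induction has no working base case, and the ``covering'' step you defer is exactly where the content lies. Two separate things go wrong. First, in the inductive step (when $\mu(\nu-1)>4$) no secondary scattering off the extra line is needed at all: monotonicity $c_{a,b}^{\mu,\nu}\geq c_{a,b}^{\mu,\nu-1}$ together with an explicit numerical check that $\rho_-^{\mu,\nu-1}<2/\mu$ and $\rho_+^{\mu,\nu-1}>\nu/2$ (so that $\phi_0^{\mu,\nu}\subseteq\Phi^{\mu,\nu-1}$) already closes the induction; you never make this containment check, and it is not automatic — it is where the hypotheses $\mu(\nu-1)>4$, $\nu>2$ and the special case $\mu=\nu=3$ enter. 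Second, and more seriously, the reduction bottoms out at $(\mu,\nu)=(3,2)$ and $(5,1)$ (up to the symmetry $c_{a,b}^{\mu,\nu}=c_{b,a}^{\nu,\mu}$), where the subdiagrams $\mathfrak{D}^{2,2}$ and $\mathfrak{D}^{1,3}$ have \emph{no} dense region, so ``feeding the inductive hypothesis'' into the collision with the extra line is vacuous there. The paper resolves these base cases by explicit computation: deforming $\mathfrak{D}^{3,2}$ into $\mathfrak{D}^{2,2}$ plus one line, using the known rays $(1+x^{n-1}y^{n})^{2}$ of $\mathfrak{D}^{2,2}$, and applying the change-of-lattice trick so that each collision is a $\mathfrak{D}^{n,2n}$ scattering whose rays $(1,1),\dots,(n,1)$ produce \emph{every} direction $(a,b)$ with $1/2<b/a<1$, a region containing $\phi_0^{3,2}$; fullness of $\Phi^{1,5}$ is then deduced from a deformation of $\mathfrak{D}^{1,5}$ that exhibits a $\mathfrak{D}^{3,2}$ subdiagram (via $c_{1,1}^{1,3}=3$), shifting slopes by $\rho\mapsto\rho+1$ so as to cover $\phi_0^{1,5}$. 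Without these explicit base cases, and without the containment check in the inductive step, your argument does not close.
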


\subsection{Induction step}

\begin{dfn}
In a standard scattering diagram $D^{\mu,\nu}$, the \emph{fundamental region} \(\phi_0^{\mu, \nu}\) is the cone spanned by the rays \(\mathbb R_{\geq 0} (\mu, 2)\) and \(\mathbb R_{\geq 0} (2, \nu)\). A ray with slope \(\rho\) is in the fundamental region if \(2/\mu \leq \rho \leq \nu/2\).
\end{dfn}

\begin{lem}
\label{lem:phiPhi}
If \(\phi_0^{\mu, \nu}\) is full, then so is \(\Phi^{\mu, \nu}\).
\end{lem}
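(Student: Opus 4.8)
The plan is to use mutation-invariance of the scattering diagram to reduce fullness of the dense cone to fullness of the single cone $\phi_0^{\mu,\nu}$. Concretely, given $(a,b)\in\mathbb{Z}_{>0}^2$ whose ray lies in $\Phi^{\mu,\nu}$, I would apply a finite sequence of mutations $\mathbf T_1^{\mu,\nu},\mathbf T_2^{\mu,\nu}$ carrying its ray into the fundamental region $\phi_0^{\mu,\nu}$, where the coefficient is non-zero by hypothesis, and then transport that non-vanishing back along the mutation word.

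I would first record how a mutation acts on the coefficients. For primitive $m_0=(a_0,b_0)$ the function $f_{m_0}^{\mu,\nu}=\prod_{k\ge 1}(1+t^{k(a_0+b_0)}x^{ka_0}y^{kb_0})^{c_{ka_0,kb_0}^{\mu,\nu}}$ records exactly the sequence $(c_{km_0}^{\mu,\nu})_{k\ge 1}$. Each $\mathbf T_i^{\mu,\nu}$ is linear on the half-plane on which it is non-trivial, with determinant $-1$, hence a lattice automorphism preserving primitivity; so the identity $f_m^{\mu,\nu}=f_{\mathbf T_i^{\mu,\nu}(m)}^{\mu,\nu}$ from \S\ref{S:mut} says that the rays in directions $m$ and $\mathbf T_i^{\mu,\nu}(m)$ have the same non-zero coefficients, with multiplicities. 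Reading this off term by term gives
\[ c_{a,b}^{\mu,\nu}=c_{\mathbf T_i^{\mu,\nu}(a,b)}^{\mu,\nu} \]
for every $(a,b)\in\mathbb{Z}_{>0}^2$ with $\mathbf T_i^{\mu,\nu}(a,b)\in\mathbb{Z}_{>0}^2$ (first for $(a,b)$ primitive, then in general after dividing by $\gcd(a,b)$ and using linearity of $\mathbf T_i^{\mu,\nu}$); in particular non-vanishing of $c^{\mu,\nu}$ is a mutation invariant.

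Next I would analyze the slope dynamics. By Lemma \ref{lem:mutslope}, $T_1,T_2$ are order-reversing involutions of $(\tfrac1\mu,\nu)$ whose fixed slopes $\tfrac2\mu,\tfrac\nu2$ are the boundary slopes of $\phi_0^{\mu,\nu}$. A direct computation from Definition \ref{def:dense} (Vieta on $\mu\rho^2-\mu\nu\rho+\nu=0$) shows $\rho_-^{\mu,\nu}+\rho_+^{\mu,\nu}=\nu$ and $\rho_-^{\mu,\nu}\rho_+^{\mu,\nu}=\tfrac\nu\mu$, which forces $T_1$ and $T_2$ each to swap $\rho_-^{\mu,\nu}\leftrightarrow\rho_+^{\mu,\nu}$ and hence to map the open interval $(\rho_-^{\mu,\nu},\rho_+^{\mu,\nu})$ onto itself; moreover the composite $T_2T_1\colon\rho\mapsto\frac{(\mu\nu-1)\rho-\nu}{\mu\rho-1}$ has fixed-point equation $\mu\rho^2-\mu\nu\rho+\nu=0$, i.e.\ exactly $\rho=\rho_\pm^{\mu,\nu}$, and trace $\mu\nu-2$, hence is hyperbolic precisely when $\mu\nu>4$. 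Thus $\langle T_1,T_2\rangle$ acts on $(\rho_-^{\mu,\nu},\rho_+^{\mu,\nu})$ as an infinite dihedral group of Möbius transformations whose two reflections have mirrors $\tfrac2\mu$ and $\tfrac\nu2$; this action is properly discontinuous, so the orbit of $\{\tfrac2\mu,\tfrac\nu2\}$ is discrete and accumulates only at the hyperbolic fixed points $\rho_\pm^{\mu,\nu}$, whence the orbit of the closed interval $[\tfrac2\mu,\tfrac\nu2]$ — equivalently, the orbit of the cone $\phi_0^{\mu,\nu}$ — tiles $(\rho_-^{\mu,\nu},\rho_+^{\mu,\nu})$. (These orbit cones are the domains $\phi_k^{\mu,\nu}$.)

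Putting the pieces together: if $(a,b)\in\mathbb{Z}_{>0}^2$ has its ray in $\Phi^{\mu,\nu}$, then $b/a$ is a rational number in $(\rho_-^{\mu,\nu},\rho_+^{\mu,\nu})$, so some finite word $g$ in $T_1,T_2$ takes $b/a$ into $[\tfrac2\mu,\tfrac\nu2]$. Applying the corresponding word in $\mathbf T_1^{\mu,\nu},\mathbf T_2^{\mu,\nu}$ to $(a,b)$: every intermediate slope lies in $(\rho_-^{\mu,\nu},\rho_+^{\mu,\nu})\subseteq(\tfrac1\mu,\nu)$, so every intermediate vector lies in $\mathbb{Z}_{>0}^2$ and the first ingredient applies at each step; hence $c_{a,b}^{\mu,\nu}$ equals the coefficient at the resulting vector $(a',b')$, whose ray lies in $\phi_0^{\mu,\nu}$, and that coefficient is non-zero since $\phi_0^{\mu,\nu}$ is full. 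As $(a,b)$ was arbitrary, $\Phi^{\mu,\nu}$ is full in the sense of Definition \ref{def:full}. I expect the main obstacle to be the third paragraph: verifying that the mutates of $\phi_0^{\mu,\nu}$ exhaust $(\rho_-^{\mu,\nu},\rho_+^{\mu,\nu})$ and nothing beyond, which is exactly where the particular value of $\rho_\pm^{\mu,\nu}$ is forced — one must show the nested, bounded slope intervals produced by successive mutations converge to the fixed points of $T_2T_1$, and hyperbolicity for $\mu\nu>4$ pins these down as $\rho_\pm^{\mu,\nu}$; in other words, the dense region is by design the largest cone that could conceivably be full.
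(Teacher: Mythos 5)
Your proposal is correct and follows essentially the same route as the paper: transport non-vanishing of coefficients along the mutation action (via $f_m^{\mu,\nu}=f_{\mathbf T_i^{\mu,\nu}(m)}^{\mu,\nu}$ and linearity of the mutations on the relevant half-planes), and show that the images of $\phi_0^{\mu,\nu}$ under the group generated by $T_1,T_2$ cover the dense region because the boundary slopes $\tfrac2\mu,\tfrac\nu2$ are the reflection fixed points and the successive images accumulate only at $\rho_\pm^{\mu,\nu}$. Your justification of the covering via hyperbolicity of $T_2T_1$ (trace $\mu\nu-2>2$) is just a more explicit version of the paper's observation that the images tend to the bounds of the dense region with no gaps, so nothing essential differs.
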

\begin{proof}
Note that as \(T_1, T_2\) are order-reversing, they map images of the fundamental region to other intervals of slope, with bounds determined by the images of the bounds of the fundamental region. As \(\tfrac{2}{\mu}, \tfrac{\nu}{2} \neq \rho_\pm^{\mu, \nu}\), images of the fundamental region tend toward the bounds of the dense region. Additionally, as \(T_1 (\tfrac{2}{\mu}) = \tfrac{2}{\mu}\) and \(T_2 (\tfrac{\nu}{2}) = \tfrac{\nu}{2}\), there is no gap between these images. Therefore they cover the dense region.

As \(T_1, T_2\) are linear on \(\tfrac{a}{b}\) and \(\tfrac{b}{a}\) respectively, the image of a full region is also full, so if \(\phi_0^{\mu, \nu}\) is full, then so is \(\Phi^{\mu, \nu}\).
\end{proof}

\begin{lem}
Suppose \(\mu (\nu - 1) > 4\) with \(\nu > 2\). If \(\Phi^{\mu, \nu - 1}\) is full in \(\mathfrak D_\infty^{\mu, \nu - 1}\), then \(\Phi^{\mu, \nu}\) is full in \(\mathfrak D_\infty^{\mu, \nu}\).
\end{lem}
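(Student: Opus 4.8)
The plan is to reduce, via Lemma~\ref{lem:phiPhi}, to showing that the fundamental region $\phi_0^{\mu,\nu}$ is full in $\mathfrak{D}^{\mu,\nu}$, and then to observe that $\phi_0^{\mu,\nu}$ is in fact contained in the dense region $\Phi^{\mu,\nu-1}$ of $\mathfrak{D}^{\mu,\nu-1}$. Granting this containment, for every $(a,b)\in\mathbb{Z}_{>0}^2$ (not necessarily primitive) whose ray lies in $\phi_0^{\mu,\nu}$ that ray also lies in $\Phi^{\mu,\nu-1}$, so $c_{a,b}^{\mu,\nu-1}\neq 0$ by the hypothesis that $\Phi^{\mu,\nu-1}$ is full in $\mathfrak{D}^{\mu,\nu-1}$; by monotonicity of the coefficients (the earlier lemma giving $c_{a,b}^{\mu,\nu-1}\le c_{a,b}^{\mu,\nu}$) this yields $c_{a,b}^{\mu,\nu}\ge c_{a,b}^{\mu,\nu-1}>0$, so $\phi_0^{\mu,\nu}$ is full in $\mathfrak{D}^{\mu,\nu}$, and Lemma~\ref{lem:phiPhi} then upgrades this to fullness of $\Phi^{\mu,\nu}$. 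In particular no new deformation or mutation argument is needed; everything reduces to a containment of cones.

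To establish $\phi_0^{\mu,\nu}\subseteq\Phi^{\mu,\nu-1}$, recall that $\phi_0^{\mu,\nu}$ consists of the rays with slope $\rho\in[2/\mu,\nu/2]$, while by Definition~\ref{def:dense} the ray of slope $\rho$ lies in $\Phi^{\mu,\nu-1}$ precisely when $\rho_-^{\mu,\nu-1}<\rho<\rho_+^{\mu,\nu-1}$; a short computation identifies $\rho_\pm^{\mu,\nu-1}$ as the two roots of $g(\rho):=\mu\rho^2-\mu(\nu-1)\rho+(\nu-1)$. Since the leading coefficient $\mu$ of $g$ is positive, it suffices to check $g(\rho)<0$ at the two endpoints $\rho=2/\mu$ and $\rho=\nu/2$. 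One has $g(2/\mu)=\tfrac{4}{\mu}-(\nu-1)$, which is negative exactly because $\mu(\nu-1)>4$, and $g(\nu/2)=(\nu-1)-\tfrac14\mu\nu(\nu-2)$, which is negative if and only if $\mu\nu(\nu-2)>4(\nu-1)$.

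The only genuine computation is thus this last inequality, which I would derive from $\mu(\nu-1)>4$ together with $\nu>2$. Writing $P:=\mu(\nu-1)$ and using $\nu(\nu-2)=(\nu-1)^2-1$, the inequality becomes $P>4(\nu-1)^2/(\nu(\nu-2))$; since $P\ge 5$ by integrality, it is enough to note that $5>4(\nu-1)^2/(\nu(\nu-2))$ whenever $\nu\ge 4$ (equivalently $\nu^2-2\nu-4>0$), and to treat $\nu=3$ by hand, where $P=2\mu$ is even and $>4$, hence $\ge 6>16/3$. The hypothesis $\nu>2$ is essential at this point: for $\nu=2$ one gets $g(\nu/2)=\nu-1>0$, and indeed $\phi_0^{\mu,2}\not\subseteq\Phi^{\mu,1}$ in general.

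I expect the arithmetic of the inequality $\mu\nu(\nu-2)>4(\nu-1)$ to be the only real obstacle, and it is a mild one: conceptually it is just the statement that, under the stated hypotheses, the lower endpoint of $\Phi^{\mu,\nu-1}$ stays below $2/\mu$ and its upper endpoint stays above $\nu/2$, so that $\Phi^{\mu,\nu-1}$ engulfs the whole fundamental region $\phi_0^{\mu,\nu}$. Once the cone containment is in place, the proof is finished by the coefficient monotonicity lemma and by Lemma~\ref{lem:phiPhi}, both already established.
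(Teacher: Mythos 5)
Your proposal is correct and follows essentially the same route as the paper's proof: coefficient monotonicity $c_{a,b}^{\mu,\nu-1}\le c_{a,b}^{\mu,\nu}$, the containment $\phi_0^{\mu,\nu}\subseteq\Phi^{\mu,\nu-1}$, and then Lemma~\ref{lem:phiPhi}. The only difference is cosmetic: you verify the key inequality $\mu\nu(\nu-2)>4(\nu-1)$ through the quadratic $g(\rho)=\mu\rho^2-\mu(\nu-1)\rho+(\nu-1)$ together with integrality of $\mu(\nu-1)$ and a separate $\nu=3$ check, whereas the paper splits into the cases $\mu(\nu-2)\ge 4$ and $\mu=\nu=3$ and compares the endpoints $2/\mu$, $\nu/2$ with $\rho_\mp^{\mu,\nu-1}$ directly.
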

\begin{proof}
As \(c_{a, b}^{\mu, \nu} \geq c_{a, b}^{\mu, \nu - 1}\), \(\Phi^{(\mu, \nu - 1)} \subseteq \mathfrak D_\infty^{\mu, \nu}\) is full as a region in \(\mathfrak D_\infty^{\mu, \nu}\).

From the assumptions, we find that either \(4 \leq \mu (\nu - 2)\) or \(\mu (\nu - 2) < 4 < \mu (\nu - 1)\). In this first case \(4 (\nu - 1) < 4 \nu \leq \mu \nu (\nu - 2)\), and in this second case \(\mu = \nu = 3\). In both cases
\begin{align*}
4 (\nu - 1) &< \mu \nu (\nu - 2) = \mu ((\nu - 1)^2 - 1)\\
\iff \mu &< (\nu - 1) (\mu (\nu - 1) - 4)\\
\iff \mu &< \sqrt{\mu (\nu - 1) (\mu (\nu - 1) - 4)}\\
\implies \frac{\nu}{2} &< \frac{\mu (\nu - 1) + \sqrt{\mu (\nu - 1) (\mu (\nu - 1) - 4)}}{2 \mu} = \rho_+^{(\mu, \nu - 1)} ,\\
\frac{2}{\mu} &> \frac{2 (\nu - 1)}{\mu (\nu - 1) + \sqrt{\mu (\nu - 1) (\mu (\nu - 1) - 4)}} = \rho_- ^{(\mu, \nu - 1)}
\end{align*}
so this full region \(\Phi^{(\mu, \nu - 1)} \subseteq \mathfrak D_\infty^{\mu, \nu}\) contains the fundamental region \(\phi_0^{\mu, \nu}\).
\end{proof}

\begin{prop}
\label{prop:step}
If \(\Phi^{3, 2}\) and \(\Phi^{5, 1}\) are full, then so is \(\Phi^{\mu, \nu}\) whenever \(\mu \nu > 4\).
\end{prop}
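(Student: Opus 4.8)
The plan is to argue by strong induction on $\mu+\nu$, reducing an arbitrary pair $(\mu,\nu)$ with $\mu\nu>4$ to one of the two base pairs $(3,2)$ and $(5,1)$ by repeated application of the preceding lemma. I first record the diagonal symmetry of standard scattering diagrams: interchanging $e_1$ and $e_2$ carries $\mathfrak D_0^{\mu,\nu}$ to $\mathfrak D_0^{\nu,\mu}$ and a ray in direction $(a,b)$ to one in direction $(b,a)$, so $c_{a,b}^{\mu,\nu}=c_{b,a}^{\nu,\mu}$ and $\Phi^{\mu,\nu}$ is full if and only if $\Phi^{\nu,\mu}$ is. Hence I may assume $\mu\ge\nu$ throughout.

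The pairs $(\mu,\nu)\in\{(3,2),(5,1)\}$ are given by hypothesis. For any other pair with $\mu\ge\nu$ and $\mu\nu>4$ I apply the previous lemma --- which says that fullness of $\Phi^{\mu,\nu-1}$ in $\mathfrak D_\infty^{\mu,\nu-1}$ implies fullness of $\Phi^{\mu,\nu}$ in $\mathfrak D_\infty^{\mu,\nu}$ provided $\mu(\nu-1)>4$ and $\nu>2$ --- possibly after a symmetry swap, to descend to a pair of strictly smaller index sum, to which the induction hypothesis applies. If $\nu\ge 3$, then $\mu\ge\nu\ge 3$ gives $\mu(\nu-1)\ge 6>4$ and $\nu>2$, and the lemma passes from $\Phi^{\mu,\nu-1}$ to $\Phi^{\mu,\nu}$ (note $(\mu,\nu-1)$ again has first index at least its second and satisfies $\mu(\nu-1)>4$). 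If $\nu=2$ then $\mu\ge 3$; for $\mu\ge 4$ I use that $\Phi^{\mu,2}$ is full if and only if $\Phi^{2,\mu}$ is full and apply the lemma with $2$ and $\mu$ playing the roles of ``$\mu$'' and ``$\nu$'': since $2(\mu-1)>4$ and $\mu>2$, fullness of $\Phi^{2,\mu-1}$ --- equivalently of $\Phi^{\mu-1,2}$, which has smaller index sum --- yields fullness of $\Phi^{2,\mu}$. If $\nu=1$ then $\mu\ge 5$; for $\mu\ge 6$ I use that $\Phi^{\mu,1}$ is full if and only if $\Phi^{1,\mu}$ is full and apply the lemma with $1$ and $\mu$ in the roles of ``$\mu$'' and ``$\nu$'': since $1\cdot(\mu-1)\ge 5>4$ and $\mu>2$, fullness of $\Phi^{1,\mu-1}$ --- equivalently of $\Phi^{\mu-1,1}$ --- yields fullness of $\Phi^{1,\mu}$. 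A direct check shows that, for $\mu\ge\nu$ and $\mu\nu>4$, these three cases cover every pair except $(3,2)$ and $(5,1)$: the descents with $\nu\ge 2$ terminate at $(3,2)$, where $2(\mu-1)>4$ fails at $\mu=3$, and the descent with $\nu=1$ terminates at $(5,1)$, where $1\cdot(\mu-1)>4$ fails at $\mu=5$; since the lemma requires $\nu>2$, the $\nu=1$ line cannot be reached from $\nu\ge 2$, which is precisely why both $(3,2)$ and $(5,1)$ are needed as base cases.

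The substantive content has already been isolated in the preceding lemmas --- which carry all the deformation and change-of-lattice input --- and in the two base cases, which are treated separately; the proposition itself is essentially bookkeeping. The one point demanding care is verifying that the hypotheses $\mu(\nu-1)>4$ and $\nu>2$ of the induction step survive each reduction, and that the $\nu=1$ case bottoms out at $(5,1)$ rather than being absorbed into the $\nu\ge 2$ induction. I do not anticipate any real difficulty beyond this organizational point.
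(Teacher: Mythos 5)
Your argument is correct and is essentially the paper's own proof: the paper disposes of this proposition in one line by repeatedly applying the preceding lemma together with the symmetry $c_{a,b}^{\mu,\nu}=c_{b,a}^{\nu,\mu}$ to descend to the base cases $(3,2)$ and $(5,1)$, which is exactly your induction on $\mu+\nu$ with the case bookkeeping written out explicitly.
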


\begin{proof}
Apply the above repeatedly, using that \(c_{a, b}^{\mu, \nu} = c_{b, a}^{\nu, \mu}\) to reduce the greater of \(\mu\) and \(\nu\) to arrive at one of the base cases.
\end{proof}

\subsection{The base cases}

\begin{lem}
\label{lem:base1}
\(\Phi^{3, 2}\) is full.
\end{lem}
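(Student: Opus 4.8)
The plan is to deduce this from Lemma~\ref{lem:phiPhi}: it suffices to show the fundamental region $\phi_0^{3,2}$ is full, i.e.\ $c_{a,b}^{3,2}\geq 1$ for all $(a,b)\in\mathbb Z_{>0}^2$ (primitive or not) with $2/3\le b/a\le 1$. I would prove this by strong induction on $a+b$, in fact establishing the stronger bound $c_{a,b}^{3,2}\geq 2$ (this is what makes the induction close up), running the partial deformation $\mathfrak D^{3,2}\rightsquigarrow\mathfrak D^{3,1}\cup\{(1+y)\text{-line}\}$ and scattering one singularity at a time exactly as in the proof of Proposition~\ref{prop:bound}. The inputs are: $c_{1,1}^{3,2}=6$ (the first–order term of $f_{(1,1)}^{3,2}$); the explicit diagrams $\mathfrak D^{3,1}_\infty$ (Figure~\ref{fig:pdef31}: $c_{2,1}^{3,2}\geq 3$, $c_{3,2}^{3,2}\geq 1$, $\dots$) and $\mathfrak D^{2,2}_\infty$ (Example~\ref{expl:scat}: $c_{k,k+1}^{3,2}\geq 2$ and $c_{k+1,k}^{3,2}\geq 2$ for every $k$); the monotonicity $c_{a,b}^{\mu,\nu}\geq c_{a,b}^{\mu',\nu'}$ for $\mu'\le\mu,\nu'\le\nu$; mutation invariance of the $f_m^{\mu,\nu}$; and positivity (Proposition~\ref{prop:pos}), so that a single scattering contribution to a coefficient is never cancelled.

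For a \emph{primitive} target $(a,b)\in\phi_0^{3,2}$ with $a+b\geq 6$ (the finitely many primitive targets with $a+b\le 5$ being checked by hand via monotonicity and the explicit small diagrams), let $m_1,m_2$ be its Stern--Brocot parents: $m_1+m_2=(a,b)$, $|\det(m_1,m_2)|=1$, both of strictly smaller coordinate sum, and — as one checks from the structure of the interval $[2/3,1]$ inside the Stern--Brocot tree — both again in $\phi_0^{3,2}$, so $c_{m_1}^{3,2},c_{m_2}^{3,2}\geq 2$ by the induction hypothesis. When the rays $\mathfrak d_{m_1},\mathfrak d_{m_2}$ meet in the deformation, Proposition~\ref{prop:col} (no root, no lattice subtlety since $|\det|=1$) identifies the scattering there with a copy of $\mathfrak D^{c_{m_1}^{3,2},c_{m_2}^{3,2}}$, in which $(a,b)=m_1+m_2$ is the primitive direction $(1,1)$; its coefficient is the first–order term $c_{m_1}^{3,2}c_{m_2}^{3,2}\geq 4$, so by positivity $c_{a,b}^{3,2}\geq 4$. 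For a \emph{non}-primitive target $(a,b)=g(a',b')$ with $(a',b')\in\phi_0^{3,2}$ primitive, the same crossing applied to the parents of $(a',b')$ reduces the claim to $c_{g,g}^{P,Q}\neq 0$ with $P=c_{m_1}^{3,2},\,Q=c_{m_2}^{3,2}\geq 2$ — that is, to fullness of the slope–$1$ edge of the auxiliary standard diagram $\mathfrak D^{P,Q}$.

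This last point — that $f_{(1,1)}^{P,Q}$ contains all of its factors (something \emph{false} for $\mathfrak D^{2,2}$, hence genuinely using $PQ>4$) — is the main obstacle, because a naive reduction of $\mathfrak D^{P,Q}$ to a base case via Proposition~\ref{prop:step} circles back to $\mathfrak D^{3,2}$ itself. I would break the circle by a dedicated crossing inside $\mathfrak D^{P,Q}$: scatter the horizontal line against the ray in direction $(g-1,g)$, whose coefficient is $\geq c_{g-1,g}^{2,2}=2$ by monotonicity (using $P,Q\geq 2$), with determinant $g$. By Proposition~\ref{prop:col} the direction $(g,g)$ of $\mathfrak D^{P,Q}$ then corresponds to the primitive direction $(1,1)$ of a (much larger) standard diagram, whose coefficient there is again a nonvanishing first–order term, so $c_{g,g}^{P,Q}\neq 0$ with no circularity — and the special case $P=c_{(1,0)}^{3,2},\,Q=c_{(0,1)}^{3,2}$ of this handles the directions $(g,g)$ in $\mathfrak D^{3,2}$ directly, using only $\mathfrak D^{2,2}$. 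What remains, and is the real technical content, is the bookkeeping: checking that every auxiliary diagram that gets invoked is touched only at a first–order term, at a mutation of an initial ray, or at an explicitly known entry of $\mathfrak D^{3,1}$ or $\mathfrak D^{2,2}$ — never in the unknown interior of a diagram that would reduce back to $\mathfrak D^{3,2}$ — and carrying out the $\det$– and $e(\cdot^{*})$–computations of Proposition~\ref{prop:col} together with the choice of perturbations that realize each of these crossings as an honest singularity.
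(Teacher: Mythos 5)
Your reduction to the fundamental region via Lemma \ref{lem:phiPhi} matches the paper, but the engine of your induction has a genuine gap: the step where the two Stern--Brocot parent rays $\mathfrak d_{m_1},\mathfrak d_{m_2}$ ``meet in the deformation'' and Proposition \ref{prop:col} identifies the local picture with $\mathfrak D^{c_{m_1}^{3,2},\,c_{m_2}^{3,2}}$. Your induction hypothesis concerns coefficients of the \emph{asymptotic} standard diagram, where all rays share the single singularity at the origin; in any deformation the coefficient $c_{m_i}^{3,2}$ is distributed over many parallel rays with different base points and with functions you do not control (computing them is as hard as the original problem), and two rays pointing into the open first quadrant from different base points need not intersect at all, let alone at a point where only those two walls meet and carry the full exponents. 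Consequently the inequality you extract, $c_{m_1+m_2}\ge c_{m_1}c_{m_2}$ for unimodular pairs, is not only unproved but false as a general principle: in $\mathfrak D^{2,2}$ one has $c_{1,0}=2$, $c_{1,1}=4$, $\det\bigl((1,0),(1,1)\bigr)=1$, yet $c_{2,1}^{2,2}=2<8$ --- precisely because in the deformed picture only the $(1,1)$-rays based on the lower horizontal line ever cross the upper one. So this is not deferred ``bookkeeping''; it is the missing idea, and it also undercuts the strengthened bound $c_{a,b}^{3,2}\ge 2$ and the non-primitive branch (where you again need to cross the horizontal line with an interior scattered ray of $\mathfrak D^{P,Q}$, which raises the same problem, e.g.\ when $P=2$ and pulling out one $(1+x)$ leaves $\mathfrak D^{1,Q}$ with no $(g-1,g)$ ray by Proposition \ref{prop:bound}). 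A smaller slip: for targets on the boundary ray through $(3,2)$ the parents are $(2,1)$ and $(1,1)$, and $(2,1)\notin\phi_0^{3,2}$, so ``both parents again in $\phi_0^{3,2}$'' fails there (patchable, since $c_{2,1}^{3,2}\ge 3$, but outside your stated scheme).

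The paper avoids all of this by only ever crossing a \emph{pulled-out initial line} with a ray of an \emph{explicitly known} subdiagram: deform $\mathfrak D^{3,2}$ to $\mathfrak D^{2,2}$ plus one $(1+x)$-line; the $\mathfrak D^{2,2}$ subdiagram has rays $(n-1,n)$ with function exactly $(1+x^{n-1}y^n)^2$ (Example \ref{expl:scat}), each of which is guaranteed to hit the extra line; Proposition \ref{prop:col} identifies that crossing with $\mathfrak D^{n,2n}$, whose coefficients $c_{k,1}^{n,2n}=2n\binom{n}{k}\ne 0$ (Example \ref{ex:1k} plus symmetry) give rays in all directions $(n,n),\dots,(2n-1,n)$, i.e.\ every lattice direction of slope in $(\tfrac12,1]\supset[\tfrac23,1]$, primitive or not, in one stroke and with positivity (Proposition \ref{prop:pos}) guaranteeing no cancellation. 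The instance of your argument that does work --- the ``special case'' you use for the diagonal directions $(g,g)$ --- is exactly this crossing; the rest of your scheme would need a new lemma allowing scattering to be restarted from two interior rays of a consistent diagram, which the paper neither provides nor, in the quantitative form you use, is true.
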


\begin{proof}
Consider a partial deformation of $\mathfrak{D}^{3,2}$ to $\mathfrak{D}^{2,2}$ and $\mathfrak{D}^{1,2}$. In $\mathfrak{D}^{2,2}$ there are rays in directions \((n-1, n )\) with function \((1 + x^{n-1} y^n)^2\) for every \(n\), see Example \ref{expl:scat}. By the change of lattice trick (Proposition \ref{prop:col}, when this hits the horizontal line from the partial deformation, this corresponds to \(\mathfrak D^{n, 2n}\). This diagram has rays in directions \((1, 1), (2,1), \dots, (n,1)\) (see e.g. Example \ref{ex:1k}), which correspond to rays in directions \((n, n), (n+1,n), \dots, (2n-1,n)\). Hence if \(\tfrac{1}{2} < \tfrac{b}{a} < 1\), then a ray in direction \((a, b)\) is present in \(\mathfrak D^{3,2}\). But this contains the fundamental region $\phi_0^{3,2}$. Therefore \(\phi_0^{3, 2}\) is full, and therefore \(\Phi^{3,2}\) is.
\end{proof}

\begin{lem}
\label{lem:base2}
\(\Phi^{5, 1}\) is full.
\end{lem}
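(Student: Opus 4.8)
As in Lemma~\ref{lem:base1}, the plan is to use Lemma~\ref{lem:phiPhi} to reduce to showing that the fundamental region $\phi_0^{5,1}$ — the rays of slope $\tfrac25\le\rho\le\tfrac12$ — is full, and to obtain the required rays from a partial deformation of $\mathfrak D^{5,1}$ in which the already-established full region $\Phi^{3,2}$ shows up at a single singularity. Concretely I would take the partial deformation of $\mathfrak D^{5,1}$ to $\mathfrak D^{3,1}$ and $\mathfrak D^{2,1}$ (writing $(1+x)^5=(1+x)^3\cdot(1+x)^2$): a horizontal line $\ell_A$ at height $0$ with function $(1+x)^3$, a parallel horizontal line $\ell_B$ at a small height $h>0$ with function $(1+x)^2$, and the vertical line $\ell_V$ with function $1+y$. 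Since $(\mathfrak D'_\infty)_{\text{as}}=\mathfrak D^{5,1}_\infty$ and all coefficients are nonnegative (Proposition~\ref{prop:pos}), so that passing to the asymptotic diagram cannot kill a coefficient, it suffices to produce, for each $(a,b)\in\mathbb Z_{>0}^2$ with $\tfrac25\le\tfrac ba\le\tfrac12$, a ray in direction $(a,b)$ anywhere in the consistent deformation.

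The singularity $\ell_A\cap\ell_V$ carries the initial data of $\mathfrak D^{3,1}$, which produces a ray in direction $(1,1)$ with function $(1+xy)^3$ (so $f_{(1,1)}^{3,1}=(1+xy)^3$; cf.\ Figure~\ref{fig:pdef31}, and more generally $c_{1,1}^{\mu,1}=\mu$). For $h$ small this ray meets $\ell_B$ at the point $(h,h)$ before meeting any other ray of the deformation, so the scattering there is governed — by the change of lattice trick, Proposition~\ref{prop:col}, applied to $m_1=(1,1)$, $d_1=3$ and $m_2=(1,0)$, $d_2=2$; here $\det(m_1,m_2)=\pm1$, hence $M'=\mathbb Z^2$ and there is no actual change of lattice — by a genuine copy of $\mathfrak D^{3,2}$, written in coordinates in which the first variable is $z^{m_1}=xy$ and the second is $z^{m_2}=x$. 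A ray of this local $\mathfrak D^{3,2}$ in direction $(\alpha,\beta)$ is therefore a ray in direction $\alpha(1,1)+\beta(1,0)=(\alpha+\beta,\alpha)$ of the deformed diagram. By Lemma~\ref{lem:base1}, $\Phi^{3,2}$ is full, so such a ray is present for every $(\alpha,\beta)\in\mathbb Z_{>0}^2$ with $\rho_-^{3,2}<\beta/\alpha<\rho_+^{3,2}$, where $\rho_\pm^{3,2}=1\pm\tfrac1{\sqrt3}$.

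To finish, I would check that the directions $(\alpha+\beta,\alpha)$ so obtained exhaust $\phi_0^{5,1}$. Given $(a,b)\in\mathbb Z_{>0}^2$ with $\tfrac25\le\tfrac ba\le\tfrac12$, set $(\alpha,\beta)=(b,\,a-b)\in\mathbb Z_{>0}^2$; then $(\alpha+\beta,\alpha)=(a,b)$ and $\tfrac\beta\alpha=\tfrac ab-1\in[1,\tfrac32]$, and since $1>1-\tfrac1{\sqrt3}$ and $\tfrac32<1+\tfrac1{\sqrt3}$ (the latter being just $3<4$) this slope lies strictly inside the dense region of $\mathfrak D^{3,2}$. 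Hence $c_{b,\,a-b}^{3,2}\neq0$, so $c_{a,b}^{5,1}\neq0$; thus $\phi_0^{5,1}$ is full, and $\Phi^{5,1}$ is full by Lemma~\ref{lem:phiPhi}. The step that needs most care is exactly this numerical fit: the image under $(\alpha,\beta)\mapsto(\alpha+\beta,\alpha)$ of the dense cone of $\mathfrak D^{3,2}$ must contain the closed cone $\phi_0^{5,1}$, and it does so only with the slim margin $\tfrac32<1+\tfrac1{\sqrt3}\approx1.577$ — had we instead split $(1+x)^5=(1+x)^4\cdot(1+x)$ we would see a local $\mathfrak D^{4,1}$, which has $\mu\nu=4$ and is not covered by our fullness results, so the argument would not close. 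A minor bookkeeping point is keeping track, in Proposition~\ref{prop:col}, of which of the exponents $d_1,d_2$ becomes the first index of the resulting standard diagram.
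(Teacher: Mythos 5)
Your proof is correct and is essentially the paper's argument up to the symmetry $x\leftrightarrow y$: the paper deforms $\mathfrak D^{1,5}$ by splitting $(1+y)^5=(1+y)^3(1+y)^2$, lets the $(1+xy)^3$ ray from the $\mathfrak D^{1,3}$ singularity hit the $(1+y)^2$ line to create a local copy of $\mathfrak D^{3,2}$, and checks that the image of $\Phi^{3,2}$ (slopes $2\pm\tfrac{1}{\sqrt3}$) contains $\phi_0^{1,5}$, exactly mirroring your splitting of $(1+x)^5$ and your slope computation. Your identification of the local diagram via Proposition \ref{prop:col} (with $\det=\pm1$, so no lattice change) and the covering check $[1,\tfrac32]\subset(1-\tfrac{1}{\sqrt3},1+\tfrac{1}{\sqrt3})$ match the paper's reasoning.
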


\begin{proof}
Consider a partial deformation of \(\mathfrak D^{1, 5}\) to \(\mathfrak D^{1, 3}\) and $\mathfrak{D}^{1,2}$, see Figure \ref{fig:1-5full}. As \(c_{1, 1}^{1, 3} = 3\), we get a subdiagram corresponding to \(\mathfrak D^{3,2}\), with rays \((a, b)\) in \(\mathfrak D^{3,2}\) corresponding to rays \((a, a + b)\) in \(\mathfrak D^{1, 5}\). This maps the slope \(\rho \mapsto \rho + 1\), so sends the dense region \(\Phi^{3,2}\) between
\[ \rho_\pm^{3,2} = \frac{6\pm\sqrt{12}}{6} = 1\pm\frac{1}{\sqrt{3}} \]
to the region between $2-\frac{1}{\sqrt{3}}$ and $2+\frac{1}{\sqrt{3}}$ in \(\mathfrak D^{1, 5}\). This contains the fundamental region $\phi_0^{1,5}$ which is spanned by $2$ and $\tfrac{5}{2}$. So $\phi_0^{1,5}$ is full and hence $\Phi^{1,5}$ is full.
\end{proof}

Now Theorem \ref{thm:full} follows from Lemma \ref{lem:base1}, Lemma \ref{lem:base2} and Proposition \ref{prop:step}.

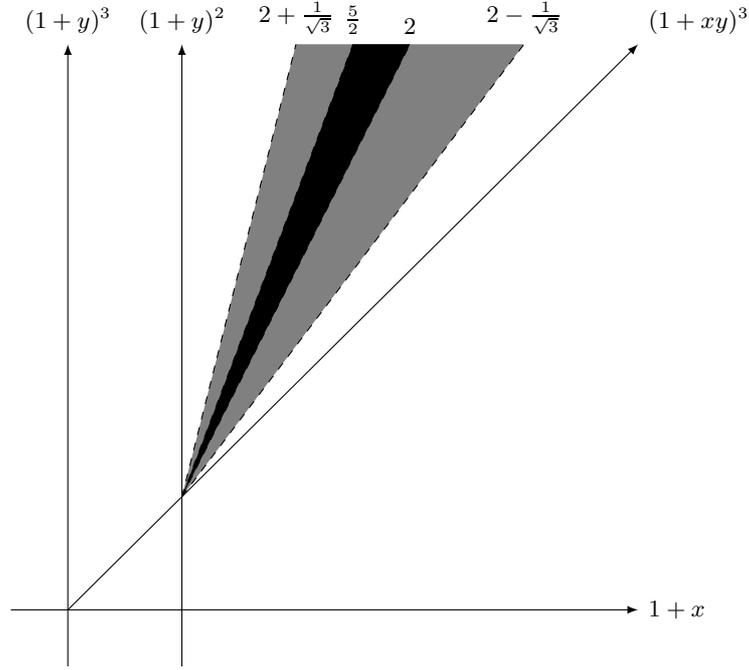
\begin{figure}[h!]
    \centering
    \begin{tikzpicture}[scale=1.5]
        \fill[color=gray] (1, 1) -- (2, 5) -- (4, 5);
        \fill[color=black] (1, 1) -- (2.5, 5) -- (3, 5);
        \draw[->] (-1/2, 0) -- (5, 0) node[right]{\footnotesize \(1 + x\)};
        \draw[->] (0, -1/2) -- (0, 5) node[above]{\footnotesize \((1 + y)^3\)};
        \draw[->] (1, -1/2) -- (1, 5) node[above]{\footnotesize \((1 + y)^2\)};
        \draw[->] (0, 0) -- (5, 5) node[above right]{\footnotesize \((1 + x y)^3\)};
        \draw[dashed] (1, 1) -- (2, 5) node[above]{\footnotesize \(2+\frac{1}{\sqrt{3}}\)};
        \draw[dashed] (1, 1) -- (2.5, 5) node[above]{\footnotesize \(\frac{5}{2}\)};
        \draw[dashed] (1, 1) -- (3, 5) node[above]{\footnotesize \(2\)};
        \draw[dashed] (1, 1) -- (4, 5) node[above]{\footnotesize \(2-\frac{1}{\sqrt{3}}\)};
    \end{tikzpicture}
    \caption{A partial deformation of \(\mathfrak D^{1, 5}\) showing the relevant rays. As any other rays only add nonnegative contributions, they are omitted.}
    \label{fig:1-5full}
\end{figure}

\subsection{Outside the dense region}

\begin{prop}[Theorem \ref{thm:main} (b)]
\label{prop:outside}
Outside of \(\Phi^{\mu, \nu}\), the only rays that occur are those given by mutations of the initial rays. In particular, rays cannot be dense.
\end{prop}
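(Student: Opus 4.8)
The plan is to argue by strong induction on the size $a+b$ by means of a \emph{mutation reduction}: I will show that any ray lying outside $\Phi^{\mu,\nu}$ can be moved by a single mutation $\mathbf T_1^{\mu,\nu}$ or $\mathbf T_2^{\mu,\nu}$ to a ray of \emph{strictly smaller} size that is again outside $\Phi^{\mu,\nu}$ or is one of the two initial rays, and then iterate until an initial ray is reached.

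First I would record a few elementary facts about the boundary slopes. From the explicit formula in Definition \ref{def:dense} one checks the identity $\nu-\rho_+^{\mu,\nu}=\rho_-^{\mu,\nu}$, the inequalities $\rho_-^{\mu,\nu}<\tfrac{2}{\mu}$ and $\rho_+^{\mu,\nu}>\tfrac{\nu}{2}$, and that $\rho_\pm^{\mu,\nu}$ are irrational when $\mu\nu>4$ (because $\mu\nu(\mu\nu-4)=(\mu\nu-2)^2-4$ is not a perfect square for $\mu\nu>4$). Combined with Proposition \ref{prop:bound} and the fact that ray slopes are rational, this shows that every ray $(a,b)$ lying outside $\Phi^{\mu,\nu}$ has slope $\rho=b/a$ with $\tfrac{1}{\mu}\le\rho<\rho_-^{\mu,\nu}$ or $\rho_+^{\mu,\nu}<\rho\le\nu$.

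The reduction step is then as follows. Suppose $\rho_+^{\mu,\nu}<\rho\le\nu$; since $\rho>\tfrac{\nu}{2}$ (strictly), the ray $\mathbf T_2^{\mu,\nu}(a,b)=(a,\nu a-b)$ has strictly smaller size, and $c_{a,\nu a-b}^{\mu,\nu}=c_{a,b}^{\mu,\nu}\ne 0$ by mutation invariance of the coefficients. If $\nu a-b=0$, then $b/a=\nu$ and, as the ray $(1,0)$ keeps its initial function $(1+tx)^\mu$ so that $c_{k,0}^{\mu,\nu}=0$ for $k\ge 2$, we get $a=1$ and have reached the initial ray $(1,0)$. Otherwise $\mathbf T_2^{\mu,\nu}(a,b)\in\mathbb Z_{>0}^2$, so by Proposition \ref{prop:bound} its slope is $\ge\tfrac1\mu$, while it equals $\nu-\rho<\nu-\rho_+^{\mu,\nu}=\rho_-^{\mu,\nu}$; hence it is again outside $\Phi^{\mu,\nu}$, with smaller size. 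Symmetrically, if $\tfrac1\mu\le\rho<\rho_-^{\mu,\nu}$ then $\rho<\tfrac2\mu$ and $\mathbf T_1^{\mu,\nu}$ sends $(a,b)$ either to the initial ray $(0,1)$ or to a ray of strictly smaller size with slope in $(\rho_+^{\mu,\nu},\nu]$. Iterating, the size strictly decreases until we reach $(1,0)$ or $(0,1)$; reversing this chain (each $\mathbf T_i^{\mu,\nu}$ being an involution) exhibits $(a,b)$ as a sequence of mutations of an initial ray. Since mutations preserve primitivity, $(a,b)$ is primitive — in particular no non-primitive ray occurs outside $\Phi^{\mu,\nu}$ — and its function is $(1+t^{a+b}x^ay^b)^\mu$ or $(1+t^{a+b}x^ay^b)^\nu$. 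Finally, for ``rays cannot be dense'', I would observe that the mutation orbit of $(1,0)$ (resp.\ $(0,1)$) is a single sequence obtained by applying $\mathbf T_2^{\mu,\nu}$ and $\mathbf T_1^{\mu,\nu}$ alternately (each $\mathbf T_i^{\mu,\nu}$ applied twice returns to the predecessor, and at each step only one mutation keeps the vector in the first quadrant and increases the size), whose slopes converge monotonically to $\rho_+^{\mu,\nu}$ (resp.\ $\rho_-^{\mu,\nu}$), since $\mathbf T_2^{\mu,\nu}\circ\mathbf T_1^{\mu,\nu}$ acts on slopes as an increasing map fixing $\rho_\pm^{\mu,\nu}$ with $\rho_+^{\mu,\nu}$ attracting (using Lemma \ref{lem:mutslope}); hence these rays accumulate only on $\partial\Phi^{\mu,\nu}$ and form a discrete set outside $\Phi^{\mu,\nu}$.

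The main thing to get right is that the reduction step never escapes into the dense region or into forbidden slopes: this is exactly where Proposition \ref{prop:bound} (no slopes outside $[\tfrac1\mu,\nu]$) must be combined with mutation invariance of the coefficients and with the identity $\nu-\rho_+^{\mu,\nu}=\rho_-^{\mu,\nu}$, and where one must handle the boundary cases $b/a\in\{\tfrac1\mu,\nu\}$ that land directly on initial rays. The remaining cases $\mu\nu\le 4$, where $\Phi^{\mu,\nu}$ is empty, are not the main point of this section and are covered by the explicit descriptions of $\mathfrak D^{\mu,\nu}$ (as in Example \ref{expl:scat}); there the same reduction applies to every slope, except that when $\mu\nu=4$ one also encounters the fixed ray of the mutations (e.g.\ $(1,1)$ in $\mathfrak D^{2,2}$).
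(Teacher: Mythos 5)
Your argument is correct, but it is organized differently from the paper's. You run a descent on the size \(a+b\) of an individual ray: combining Proposition \ref{prop:bound}, mutation invariance of the coefficients, the identities \(\nu-\rho_+^{\mu,\nu}=\rho_-^{\mu,\nu}\) and \(T_1(\rho_-^{\mu,\nu})=\rho_+^{\mu,\nu}\), the inequalities \(\rho_-^{\mu,\nu}<\tfrac2\mu\), \(\rho_+^{\mu,\nu}>\tfrac\nu2\), and the (correctly verified) irrationality of \(\rho_\pm^{\mu,\nu}\), you show that any ray outside \(\Phi^{\mu,\nu}\) is strictly shortened by exactly one of \(\mathbf T_1^{\mu,\nu},\mathbf T_2^{\mu,\nu}\) while remaining outside \(\Phi^{\mu,\nu}\), until it lands on an initial ray; reversing the chain yields Theorem \ref{thm:main}(b) directly, including primitivity and the exponents \(\mu,\nu\). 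The paper instead propagates the empty slope intervals \((0,\tfrac1\mu)\) and \((\nu,\infty)\) forward under the mutations and shows, via monotone convergence and a fixed-point argument for the composition of the two slope mutations, that these intervals together with the discrete set of mutation images of the initial slopes exhaust the complement of \([\rho_-^{\mu,\nu},\rho_+^{\mu,\nu}]\). Both proofs rest on the same inputs (Proposition \ref{prop:bound}, mutation invariance, order reversal, and \(\rho_\pm^{\mu,\nu}\) as the relevant fixed points); your version classifies each outside ray more directly at the cost of the extra irrationality observation, while the paper's version produces the accumulation of the outside slopes at \(\rho_\pm^{\mu,\nu}\) as part of the same limiting argument.

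One small inaccuracy in your closing remark: the slopes in the mutation orbit of \((1,0)\) do not converge monotonically to \(\rho_+^{\mu,\nu}\). Each orbit alternates between the two sides of the dense region, so its slope sequence splits into a decreasing subsequence converging to \(\rho_+^{\mu,\nu}\) and an increasing subsequence converging to \(\rho_-^{\mu,\nu}\) (and likewise for the orbit of \((0,1)\)). This does not affect your conclusion, since the accumulation points are still only \(\rho_\pm^{\mu,\nu}\), which lie on \(\partial\Phi^{\mu,\nu}\), so the rays outside \(\Phi^{\mu,\nu}\) remain discrete away from the boundary.
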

\begin{proof}
Let 
\begin{align*} 
\alpha_0 &= 0, 	& \alpha_1 &= \tfrac{1}{\mu}, 	& \alpha_{n + 1} &= T_2 (\beta_n) \\
\beta_0 &= \infty, 	& \beta_1 &= \nu, 			& \beta_{n + 1} &= T_1 (\alpha_n).
\end{align*}
We know there are no rays with slope \(\alpha_0 < \rho < \alpha_1\) or \(\beta_0 > \rho > \beta_1\), and under mutations, we know if there are no rays with slope \(\alpha_{n - 1} < \rho < \alpha_n\) or \(\beta_{n - 1} > \rho > \beta_n\), there are none with slope \(\alpha_n < \rho < \alpha_{n + 1}\) or \(\beta_n > \rho > \beta_{n + 1}\).

Note also that \(\alpha_0 < \rho_-^{\mu, \nu}\) and \(\beta_0 > \rho_+^{\mu, \nu}\) and that \(T_1, T_2 : \rho_\pm^{\mu, \nu} \mapsto \rho_\mp^{\mu, \nu}\) and are order-reversing, so \(\alpha_n < \rho_-^{\mu, \nu}\) and \(\beta_n > \rho_+^{\mu, \nu}\). So we get bounded monotone sequences \(\alpha_n, \beta_n\), so they converge to \(\alpha,\beta\) respectively. As \(T_2 T_1\) is continuous and maps \(\alpha_n\) to \(\alpha_{n + 2}\) and \(\beta_n\) to \(\beta_{n + 2}\) respectively, \(\alpha, \beta\) must be fixed points of \(T_2 T_1\). But the fixed points of 
\[ T_2 T_1 : \rho \mapsto \nu - \frac{1}{\mu - 1/\rho} \]
are exactly \(\rho_\pm^{\mu, \nu}\). So \(\alpha = \rho_-^{\mu, \nu}\) and \(\beta = \rho_+^{\mu, \nu}\), and we get the claim.
\end{proof}

\section{Recursion}

For a fixed direction vector $(a,b)\in\mathbb{Z}_{>0}^2$ one can get a recursion relation for $c_{a,b}^{\mu,\nu}$ in terms of other coefficients $c_{a',b'}^{\mu',\nu'}$ for which at least one of $a',b',\mu',\nu'$ is smaller than $a,b,\mu,\nu$. This is obtained by considering the partial deformation of $\mathfrak{D}^{\mu,\nu}$ to $\mathfrak{D}^{\mu,\nu-1}$ and $\mathfrak{D}^{\mu,1}$ and describing all scattering that can lead to a ray with direction $(a,b)$.

Here we describe the recursion relation for the easiest families of direction vectors $(1,k)$ and $(2,k)$, for $k\in\mathbb{Z}_{>0}$, and solve it for $(1,k)$. In general the recursion relation will be very difficult and probably won't have a solution in terms of a nice function.

\begin{ex}
\label{ex:1k}
Consider \(c_{1, k}^{\mu, \nu}\). In a partial deformation of \(\mathfrak D^{\mu, \nu}\) (see Figure \ref{fig:pdef1k}), there are only two ways we can get a contribution: directly from a standard subdiagram, or from a ray in direction \((1, k - a)\) striking the vertical line. This is due to the change of lattice method and that all rays occur in the first quadrant. Therefore
\begin{align*}
c_{1, k}^{\mu, \nu} &= c_{1, k}^{\mu, \nu - 1} + c_{1, k}^{\mu, 1} + \sum_{a = 1}^{k - 1} \bigg( c_{1, a}^{c_{1, k - a}^{\mu, \nu - 1}, 1} \bigg)\\
&= c_{1, k}^{\mu, \nu - 1} + c_{1, k}^{\mu, 1} + c_{1, 1}^{c_{1, k - 1}^{\mu, \nu - 1}, 1}\\
&= c_{1, k}^{\mu, \nu - 1} + c_{1, k - 1}^{\mu, \nu - 1} + \begin{cases} \mu &\text{if } k = 1 \\ 0 &\text{otherwise} \end{cases}
\end{align*}
as all rays have slope between \(1/\mu\) and \(\nu\), and \(c_{a, 1}^{\mu, 1} = \binom{\mu}{a}\). We can solve this recurrence relation to obtain \[c_{1, k}^{\mu, \nu} = \mu \binom{\nu}{k} .\]
\end{ex}

\begin{figure}[h!]
\centering
\begin{tikzpicture}[scale=1.2]
\draw[<->] (0.0, -1.0) -- (0.0, 4.0) node[above]{\((1 + y)^{\nu - 1}\)};
\draw[<->] (-1.0, 0.0) -- (4.0, 0.0) node[right]{\((1 + x)^\mu\)};
\draw[<->] (2.0, -1.0) -- (2.0, 4.0) node[above]{\((1 + y)\)};
\draw[->] (0, 0) -- (4, 4) node[above right]{\((1 + x y^k)^{c_{1, k}^{\mu, \nu - 1}}\)};
\draw[->] (0, 0) -- (4, 1) node[right]{\((1 + x y^{k - 1})^{c_{1, k - 1}^{\mu, \nu - 1}}\)};
\draw[->] (2, 0) -- (4, 2) node[right]{\((1 + x y^k)^{c_{1, k}^{\mu, 1}}\)};
\draw[->] (2, 0.5) -- (4, 2.5) node[above right]{\((1 + x y^k)^{c_{1, 1}^{c_{1, k - 1}^{\mu, \nu - 1}, 1}}\)};
\end{tikzpicture}
\caption{A partial deformation of the diagram \(\mathfrak D^{\mu, \nu}\), showing the rays which contribute to \(c_{1, k}^{\mu, \nu}\).}
\label{fig:pdef1k}
\end{figure}
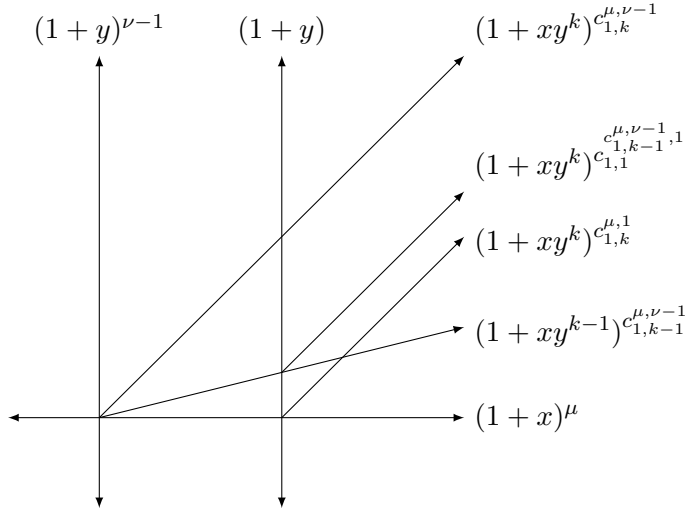

\begin{ex}
\label{ex:2k}
Consider \(c_{2, k}^{\mu, \nu}\). In a partial deformation of \(\mathfrak D^{\mu, \nu}\), there are the following ways we get a contribution:
\begin{enumerate}
    \item Directly from the \(\mathfrak D^{\mu, \nu - 1}\) or \(\mathfrak D^{\mu, 1}\) subdiagrams,
    \item a ray in the direction \((2, a)\) from the \(\mathfrak D^{\mu, \nu - 1}\) subdiagram striking the \((0, 1)\) ray,
    \item a ray in the direction \((1, a)\) from the \(\mathfrak D^{\mu, \nu - 1}\) subdiagram striking the \((0, 1)\) ray,
    \item a ray in the direction \((1, a)\) from the \(\mathfrak D^{\mu, \nu - 1}\) subdiagram striking a ray in the direction \((1, k - a)\) from the \(\mathfrak D^{\mu, 1}\) subdiagram,
    \item from two rays in directions \((1, a), (1, a')\) from the \(\mathfrak D^{\mu, \nu - 1}\) subdiagram, where one ray strikes the \((0, 1)\) ray and produces a new ray which strikes the other ray (see Figure \ref{fig:2kcase}).
\end{enumerate}

\begin{figure}[h!]
\centering
\begin{tikzpicture}[scale=1.2]
\draw[->] (-1, 0) -- (4, 0) node[right]{\((1 + x)^\mu\)};
\draw[->] (0, -1) -- (0, 3) node[above]{\((1 + y)^{\nu - 1}\)};
\draw[->] (2, -1) -- (2, 3) node[above]{\((1 + y)\)};
\draw[->] (0, 0) -- (4, 1) node[right]{\((1 + x^{a'} y)^{\alpha'}\)};
\draw[->] (0, 0) -- (4, 2) node[right]{\((1 + x^a y)^\alpha\)};
\draw[->] (2, 0.5) -- (4, 3) node[above right]{\((1 + x^b y)^\beta\)};
\draw[->] (8/3, 4/3) -- (4, 2.5) node[above right]{\((1 + x^c y^2)^\gamma\)};
\end{tikzpicture}
\caption{Case 5 from Example \ref{ex:2k}.}
\label{fig:2kcase}
\end{figure}
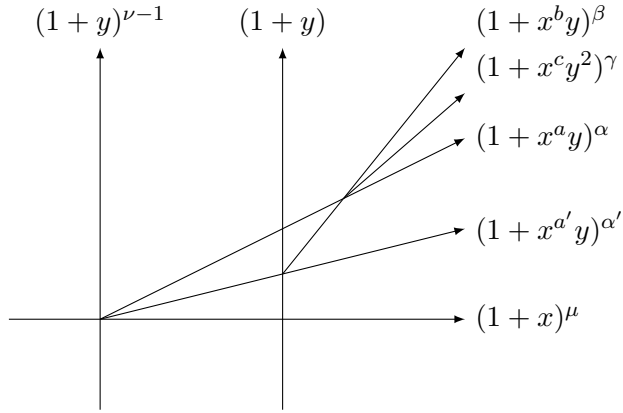
But cases 4 and 5 cannot occur, since for case 4 we would require \(k - a = 1\), and for case 5 we would require \(b = a' + 1 \leq a\). For case 2, the contribution depends on whether \(a\) is even or not:
\begin{itemize}
    \item When \(a\) is even, \((2, a) = (2, 2b)\) is not primitive. The lattice \(M'\) in the change of lattice trick is generated by \((2, 0), (0, 1)\), so the lattice \(N'\) is generated by \((\tfrac{1}{2}, 0), (0, 1)\), and the primitive vector orthogonal to \((2, k)\) is \(n = (\tfrac{k}{2}, -1)\). So \(e(n) = 1\) if \(k\) is even and \(e(n) = 2\) if \(k\) is odd. Similarly \(e(m_2^\ast) = 1\) and \(e(m_1^\ast) = 2\), so we get a contribution of \(c_{1, k - a}^{c_{2, a}^{\mu, \nu - 1}, 2} = c_{2, a}^{\mu, \nu - 1} \binom{2}{k - a}\) if \(k\) is even and \(\frac{1}{2} c_{1, k - a}^{c_{2, a}^{\mu, \nu - 1}, 2} = \frac{1}{2} c_{2, a}^{\mu, \nu - 1} \binom{2}{k - a}\) by Example \ref{ex:1k} if \(k\) is odd. There's no contribution unless \(a = k - 1\) or \(k - 2\).
    \item When \(a\) is odd, we have the above except \(e(m_2^\ast) = 2\), so the contribution is \(\frac{1}{2} c_{1, k - a}^{2 c_{2, a}^{\mu, \nu - 1}, 2} = c_{2, a}^{\mu, \nu - 1} \binom{2}{k - a}\) if \(k\) is odd and \(c_{1, k - a}^{2 c_{2, a}^{\mu, \nu - 1}, 2} = 2 c_{2, a}^{\mu, \nu - 1} \binom{2}{k - a}\) if \(k\) is even, using the change of lattice trick. Again there's no contribution unless \(a = k - 1\) or \(k - 2\).
\end{itemize}
So the total contribution from case 2 is \(c_{2, k - 2}^{\mu, \nu - 1} + 4 c_{2, k - 1}^{\mu, \nu - 1}\) if \(k\) is even and \(c_{2, k - 2}^{\mu, \nu - 1} + c_{2, k - 1}^{\mu, \nu - 1}\) if \(k\) is odd.

For case 3, the lattice \(M' = M\) so we get a contribution of \(c_{2, k - 2a}^{c_{1, a}^{\mu, \nu - 1}, 1}\). But for this to be non-zero, \(k - 2a \leq 2\) by the bound in Proposition \ref{prop:bound}, and furthermore \(k - 2a \neq 2\) as \(f_{1, 1}^{\mu, 1} = (1 + x^2 y)^\mu\) from mutation. So the only contribution comes from when \(k = 2a + 1\) is odd, and it's \(c_{2, 1}^{c_{1, a}^{\mu, \nu - 1}, 1} = \binom{c_{1, a}^{\mu, \nu - 1}}{2} = \binom{\mu \binom{\nu - 1}{a}}{2}\).

So we get the relation \[c_{2, k}^{\mu, \nu} = c_{2, k}^{\mu, \nu - 1} + c_{2, k}^{\mu, 1} + c_{2, k - 2}^{\mu, \nu - 1} + \begin{cases} 4 c_{2, k - 1}^{\mu, \nu} &\text{if } k \equiv 0 \pmod 2 \\ c_{2, k - 1}^{\mu, \nu - 1} + \binom{\mu \binom{\nu - 1}{\lfloor k/2 \rfloor}}{2} &\text{if } k \equiv 1 \pmod 2 \end{cases}\]
\end{ex}
In general, this can be difficult to compute due to the number of ways a ray may occur. Even for \(c_{3, k}^{\mu, \nu}\), we begin to have intersections of rays outside of those involving the starting lines, and we must consider more cases of parity.

\section{Polynomiality of coefficients}

\begin{prop}[Theorem \ref{polythm}]
\label{prop:poly}
The coefficients \(c_{a, b}^{\mu, \nu}\) are polynomial in \(\mu, \nu\), with degrees \(a, b\) respectively. If we expand them in terms of binomial coefficients
\[ c_{a,b}^{\mu,\nu} = \sum_{k=1}^\mu\sum_{l=1}^\nu \lambda_{k,l} \binom{\mu}{k}\binom{\nu}{l}, \]
then $\lambda_{k,l}=0$ for all $k,l$ with $c_{a,b}^{k,l}=0$.
\end{prop}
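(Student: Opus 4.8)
The plan is to derive everything from Corollary~\ref{cor:GPS}, which writes $\log f_{a,b}^{\mu,\nu}$ in terms of the numbers $N_{\vec p_1,\vec p_2}$ — these do \emph{not} depend on $\mu,\nu$ — paired against the coefficients $\binom{\mu}{\ell(\vec p_1)}\binom{\nu}{\ell(\vec p_2)}$, which are multinomial, hence polynomial in $\mu,\nu$ of degree in $\mu$ (respectively $\nu$) equal to the number of parts of $\vec p_1$ (respectively $\vec p_2$). So the source of the polynomiality, the degree bound, and the binomial-basis bookkeeping are all built into that one formula.

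I would first treat polynomiality and the degree bounds. For $(a,b)$ primitive, comparing the lowest-order-in-$(x,y)$ terms on the two sides of Corollary~\ref{cor:GPS}, using $f_{a,b}^{\mu,\nu}=\prod_{k\ge1}(1+t^{k(a+b)}x^{ka}y^{kb})^{c_{ka,kb}^{\mu,\nu}}$, identifies
\[
c_{a,b}^{\mu,\nu}=N_{a,b}^{\mu,\nu}=\sum_{\vec p_1\vdash a}\sum_{\vec p_2\vdash b}\binom{\mu}{\ell(\vec p_1)}\binom{\nu}{\ell(\vec p_2)}N_{\vec p_1,\vec p_2},
\]
visibly a polynomial in $\mu,\nu$ of $\mu$-degree $\le a$ (a partition of $a$ has at most $a$ parts, with equality only for $(1^a)$) and $\nu$-degree $\le b$. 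For non-primitive $(a,b)=N(a_0,b_0)$ with $(a_0,b_0)$ primitive I would induct on $N$: taking $\log$ of $f_{a_0,b_0}^{\mu,\nu}=\prod_{d\ge1}(1+w^d)^{c_{da_0,db_0}^{\mu,\nu}}$ with $w=t^{a_0+b_0}x^{a_0}y^{b_0}$ and comparing the coefficient of $w^{N}$ with Corollary~\ref{cor:GPS} yields
\[
c_{Na_0,Nb_0}^{\mu,\nu}=N\,N_{Na_0,Nb_0}^{\mu,\nu}-\sum_{d\mid N,\ d<N}\frac{(-1)^{N/d-1}}{N/d}\,c_{da_0,db_0}^{\mu,\nu},
\]
a rational combination of polynomials, the correction terms having strictly smaller $\mu$-degree. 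For the \emph{reverse} degree inequalities one must check that the top coefficient is a positive multiple of the fully refined invariant $N_{(1^a),(1^b)}$, which is a coefficient of the ray function in the fully deformed diagram of Proposition~\ref{prop:GPS}, hence $\ge 0$ by positivity of scattering, and is nonzero — the one point needing a small enumerative input, namely exhibiting a single tropical curve through the $a+b$ prescribed general points that uses each deformed line exactly once.

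The vanishing of the $\lambda_{k,l}$ is then purely formal and uses nothing beyond polynomiality, Proposition~\ref{prop:pos}, and the monotonicity lemma (if $\mu\le\mu'$ and $\nu\le\nu'$ then $c_{a,b}^{\mu,\nu}\le c_{a,b}^{\mu',\nu'}$). Writing $c_{a,b}^{\mu,\nu}=\sum_{k',l'}\lambda_{k',l'}\binom{\mu}{k'}\binom{\nu}{l'}$ and applying iterated finite differences in $\mu$ and $\nu$ — using $\binom{\mu+1}{k'}-\binom{\mu}{k'}=\binom{\mu}{k'-1}$ — gives the inversion formula
\[
\lambda_{k',l'}=\sum_{i=0}^{k'}\sum_{j=0}^{l'}(-1)^{k'-i+l'-j}\binom{k'}{i}\binom{l'}{j}\,c_{a,b}^{i,j}.
\]
If $c_{a,b}^{k,l}=0$, then since $(a,b)\in\mathbb{Z}_{>0}^2$ forces $c_{a,b}^{i,0}=c_{a,b}^{0,j}=0$, and since $0\le c_{a,b}^{i,j}\le c_{a,b}^{k,l}=0$ for $1\le i\le k$, $1\le j\le l$ by monotonicity and positivity, every term in the inversion formula vanishes; hence $\lambda_{k',l'}=0$ for all $k'\le k$, $l'\le l$, and in particular $\lambda_{k,l}=0$.

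The step I expect to be the genuine obstacle is the \emph{lower} bound on the degree, i.e.\ that $N_{(1^a),(1^b)}$ is strictly positive for every $(a,b)\in\mathbb{Z}_{>0}^2$ and not only for the primitive ones; this is the only place where real enumerative geometry is needed, and some care is required to ensure the refined count does not degenerate for non-primitive $(a,b)$. By contrast, the non-primitive recursion for polynomiality is routine (if a little fiddly with the M\"obius bookkeeping), and the vanishing of the $\lambda_{k,l}$ is completely elementary once polynomiality is in hand.
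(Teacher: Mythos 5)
Your argument is essentially correct but takes a genuinely different route from the paper's. You derive polynomiality from Corollary \ref{cor:GPS}: the invariants $N_{\vec p_1,\vec p_2}$ are independent of $\mu,\nu$, so for primitive $(a,b)$ the identity $c_{a,b}^{\mu,\nu}=\sum_{\vec p_1,\vec p_2}\binom{\mu}{\ell(\vec p_1)}\binom{\nu}{\ell(\vec p_2)}N_{\vec p_1,\vec p_2}$ does all the work, a M\"obius-type recursion handles imprimitive $(a,b)$, and the vanishing of $\lambda_{k,l}$ follows from binomial inversion combined with the monotonicity lemma and Proposition \ref{prop:pos}; all of these steps are sound. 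The paper never passes through Gromov--Witten theory: it works with the fully deformed diagram over $\mathbb C\llbracket t_{1,1},\dots,t_{2,\nu}\rrbracket$, notes that the exponent $\tilde c^{\mu,\nu}_{\mathbf P_1,\mathbf P_2}$ depends only on the multiset of nonzero parts (symmetry under permuting the lines) and that at most $a$ of the $t_{1,j}$ and $b$ of the $t_{2,j}$ can occur in a monomial of a wall in direction $(a,b)$; this yields the binomial expansion with manifestly nonnegative $\lambda_{k,l}$, so the vanishing statement is immediate from $c_{a,b}^{k,l}=\sum_{\ell_1\le k,\ell_2\le l}\binom{k}{\ell_1}\binom{l}{\ell_2}\lambda_{\ell_1,\ell_2}$ and nonnegativity, with no inversion, no monotonicity, and no imprimitive recursion needed. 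Your version buys the identification $\lambda_{k,l}=\sum_{\ell(\vec p_1)=k,\,\ell(\vec p_2)=l}N_{\vec p_1,\vec p_2}$ (essentially Corollary \ref{cor:multi}); the paper's is more self-contained and gives nonnegativity of the $\lambda_{k,l}$ for free.

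The one incomplete step is the one you flag yourself: strict positivity of the coefficient of $\binom{\mu}{a}\binom{\nu}{b}$, i.e.\ $N_{(1^a),(1^b)}>0$ (equivalently $\tilde c_{(1^a),(1^b)}>0$), which is what upgrades ``degree at most $a,b$'' to ``degree exactly $a,b$''. Your worry about imprimitive $(a,b)$ is unfounded: in your recursion the correction terms have strictly smaller degrees, so the leading coefficient is $N\cdot N_{(1^a),(1^b)}$ with $N=\gcd(a,b)$, and since $\gcd(\vec p_1,\vec p_2)=1$ for $\vec p_1=(1^a),\vec p_2=(1^b)$ this number is a single wall exponent of the fully deformed diagram (the square-free monomial $t_{1,1}\cdots t_{1,a}t_{2,1}\cdots t_{2,b}x^ay^b$ can only arise linearly from one factor), so no degeneration occurs. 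What remains is exactly the paper's one-line argument: fully deform $\mathfrak D^{a,b}$ into $a+b$ general lines, each carrying a single binomial factor, and build a ray in direction $(a,b)$ by successive pairwise collisions, each of which is nontrivial because after the change of lattice trick it is governed by a standard diagram with $c_{1,1}\neq 0$; this produces a wall whose function contains the square-free monomial with positive exponent. So the missing input is not a genuine enumerative computation but a short induction on collisions, and with it your proof closes.
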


\begin{proof}
Consider the diagram \[\mathfrak D = \left\{\Bigg(\mathbb R_{\geq 0} (1, 0), \prod_{j = 1}^m (1 + t_{1, j} x) \Bigg), \Bigg(\mathbb R_{\geq 0} (0, 1), \prod_{j = 1}^n (1 + t_{2, j} y) \Bigg) \right\}\] with \(R = \mathbb C \llbracket t_{1, 1}, \dots, t_{1, \mu}, t_{2, 1}, \dots, t_{2, \nu} \rrbracket\). With \(t_{i, j} = 1\), this is the same diagram as $\mathfrak{D}^{\mu,\nu}$ with $t=1$. For ordered partitions \(\mathbf P_1, \mathbf P_2\) of \(\mu, \nu\) respectively, let \(\tilde c_{\mathbf P_1, \mathbf P_2}^{\mu, \nu}\) be the coefficient of \((1 + t_{\mathbf P_1, \mathbf P_2} x^a y^b)\) in \(\mathfrak D_\infty\), where \(t_{i_1 + \dotsb + i_k, j_1 + \dotsb + j_l} = t_{1, i_1} \dotsm t_{1, i_k} t_{2, j_1} \dotsm t_{j_l}\). So \[c_{a, b}^{\mu, \nu} = \sum_{\mathbf P_1, \mathbf P_2} (\tilde c_{\mathbf P_1, \mathbf P_2}^{\mu, \nu}) .\]

However, the length of \(\mathbf P_1\) is at most \(a\), and the length of \(\mathbf P_2\) is at most \(b\). So we can express
\begin{align*}
c_{a, b}^{\mu, \nu} &= \sum_{\ell_1 \leq a} \sum_{\ell_2 \leq b} \sum_{\substack{|\mathbf P_1| = a \\ \ell(\mathbf P_1) = \ell_1}} \sum_{\substack{|\mathbf P_2| = b \\ \ell(\mathbf P_2) = \ell_2}} (\tilde c_{\mathbf P_1, \mathbf P_2}^{\mu, \nu})\\
&= \sum_{\ell_1 \leq a} \sum_{\ell_2 \leq b} \binom{\mu}{\ell_1} \binom{\nu}{\ell_2} \sum_{\substack{|\mathbf P_1| = a \\ \ell(\mathbf P_1) = \ell_1 \\ \mathbf P_1 \in P_{\ell_1}}} \sum_{\substack{|\mathbf P_2| = b \\ \ell(\mathbf P_2) = \ell_2 \\ \mathbf P_2 \in P_{\ell_2}}} ( \tilde c_{\mathbf P_1, \mathbf P_2}^{\ell_1, \ell_2} )
\end{align*}
by symmetry, where \(\mathbf P_i \in P_k\) if the partition contains only the first \(k\) values. But the expression varies with \(\mu, \nu\) only in the binomial expressions. So this is polynomial in \(\mu, \nu\) of degrees at most \(a, b\).

Finally, note that the coefficient of \(\binom{\mu}{a} \binom{\nu}{b}\) is non-zero, as we can totally deform the diagram \(\mathfrak D^{a, b}\), then pick up a contribution from each line, as \(c_{1, 1}^{\mu, \nu} \neq 0\). So \(c_{a, b}^{\mu, \nu}\) is polynomial of degrees \(a, b\).

Now consider $k,l$ with $c_{a,b}^{k,l}=0$. As above we can expand
\[ c_{a,b}^{k,l} = \sum_{\ell_1 \leq k} \sum_{\ell_2 \leq l} \binom{k}{\ell_1} \binom{l}{\ell_2} \sum_{\substack{|\mathbf P_1| = a \\ \ell(\mathbf P_1) = \ell_1 \\ \mathbf P_1 \in P_{\ell_1}}} \sum_{\substack{|\mathbf P_2| = b \\ \ell(\mathbf P_2) = \ell_2 \\ \mathbf P_2 \in P_{\ell_2}}} ( \tilde c_{\mathbf P_1, \mathbf P_2}^{\ell_1, \ell_2} ) \]
Since the coefficients are all non-negative this shows that $\lambda_{\ell_1,\ell_2}=0$ for all $\ell_1\leq k, \ell_2\leq l$.
\end{proof}

For $k,l$ large enough $a,b$ is contained in the dense region for $\mathfrak{D}^{k,l}$ and hence $c_{a,b}^{k,l}\neq 0$ by Theorem \ref{thm:main}. So the condition $c_{a,b}^{k,l}=0$ can only occur for small values of $k,l$. The following lemma makes this precise.

We can also consider unordered partitions $\vec{p}_1,\vec{p}_2$ and expand $c_{a,b}^{\mu,\nu}$ in terms of multinomial coefficients (c.f. \S\ref{S:GW}). Then Proposition \ref{prop:poly} becomes the following.

\begin{cor}
\label{cor:multi}
We have
\[ c_{a,b}^{\mu,\nu} = \sum_{\vec{p}_1,\vec{p}_2} c_{\vec{p}_1,\vec{p}_2} \binom{\mu}{\ell(\vec{p}_1)}\binom{\nu}{\ell(\vec{p}_2)}, \]
where
\[ c_{\vec{p}_1,\vec{p}_2} = \sum_{\mathbf{P}_1,\mathbf{P}_2} c_{\mathbf{P}_1,\mathbf{P}_2}, \]
The sum is over pairs of ordered partitions compatible with $\vec{p}_1,\vec{p}_2$ and $c_{\mathbf{P}_1,\mathbf{P}_2}$ is the exponent of the $1+t^{\mathbf{P}_1,\mathbf{P}_2}$-term in any standard diagram (this doesn't depend on $\mu,\nu$).
\end{cor}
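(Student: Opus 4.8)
The plan is to obtain Corollary \ref{cor:multi} as an immediate repackaging of Proposition \ref{prop:poly} — more precisely, of the expansion established in its proof — trading ordered partitions padded with zeros for unordered partitions weighted by multinomial coefficients, in exactly the way Corollary \ref{cor:GPS} repackages Proposition \ref{prop:GPS}.

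First I would recall from the proof of Proposition \ref{prop:poly} that $c_{a,b}^{\mu,\nu}=\sum_{\mathbf P_1,\mathbf P_2}c_{\mathbf P_1,\mathbf P_2}$, the sum running over all pairs of ordered partitions of $a$ and $b$ of lengths $\mu$ and $\nu$ (zeros allowed), where $c_{\mathbf P_1,\mathbf P_2}$ is the exponent of the monomial factor $1+t^{\mathbf P_1,\mathbf P_2}x^ay^b$ in the fully deformed consistent diagram $\mathfrak D_\infty$. Two observations then do the work. The first is symmetry: the deformation problem is invariant under permuting the variables $t_{1,1},\dots,t_{1,\mu}$ among themselves and $t_{2,1},\dots,t_{2,\nu}$ among themselves (the initial functions $\prod_j(1+t_{1,j}x)$ and $\prod_j(1+t_{2,j}y)$ are symmetric in these), so $c_{\mathbf P_1,\mathbf P_2}$ depends only on the multisets of nonzero parts of $\mathbf P_1$ and $\mathbf P_2$, i.e.\ on the unordered partitions $\vec p_1\vdash a$, $\vec p_2\vdash b$; call this common value $c_{\vec p_1,\vec p_2}$ (the sum over compatible orderings in the statement re-expresses it, all summands being equal). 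The second is stability in $\mu,\nu$: the monomial $t^{\mathbf P_1,\mathbf P_2}$ involves only the variables indexed by the supports of $\mathbf P_1$ and $\mathbf P_2$, so setting the remaining variables to $0$ exhibits the same exponent inside a deformed diagram with fewer lines; hence $c_{\vec p_1,\vec p_2}$ is genuinely independent of $\mu,\nu$, matching the parenthetical assertion (and, via Proposition \ref{prop:poly}, the intrinsic coefficients $\lambda_{k,l}$).

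It then remains to count. Collecting the ordered partitions $\mathbf P_1$ of length $\mu$ (with zeros) by the unordered partition $\vec p_1$ they restrict to, the number of such $\mathbf P_1$ is the number of ways to drop the parts of $\vec p_1$ into $\mu$ slots and zero out the rest, which is precisely the multinomial coefficient $\binom{\mu}{\ell(\vec p_1)}$, whose final block records the $\mu-|\vec p_1|$ zero entries. Doing the same for $\mathbf P_2$ and using the two observations converts $\sum_{\mathbf P_1,\mathbf P_2}c_{\mathbf P_1,\mathbf P_2}$ into $\sum_{\vec p_1,\vec p_2}\binom{\mu}{\ell(\vec p_1)}\binom{\nu}{\ell(\vec p_2)}\,c_{\vec p_1,\vec p_2}$, which is the assertion. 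I do not expect a real obstacle: this is combinatorial bookkeeping on top of Proposition \ref{prop:poly}. The only points that deserve a line of care are the stability observation — that unused deformation variables do not affect a fixed monomial's exponent — and matching the factorial conventions so that the padded count comes out exactly as $\binom{\mu}{\ell(\vec p_1)}$.
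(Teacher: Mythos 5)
Your proposal is correct and takes essentially the paper's own route: the corollary is just the regrouping of the multi-variable expansion $c_{a,b}^{\mu,\nu}=\sum_{\mathbf P_1,\mathbf P_2}\tilde c_{\mathbf P_1,\mathbf P_2}$ from the proof of Proposition \ref{prop:poly} by underlying unordered partitions, using the symmetry under permuting the $t_{1,j}$'s and $t_{2,j}$'s and the independence of these exponents from $\mu,\nu$ (setting unused deformation variables to zero), exactly as you argue. Just be careful to fix one normalization consistently --- either $c_{\vec p_1,\vec p_2}$ is the common value at a single representative ordering paired with the full multinomial $\binom{\mu}{\ell(\vec p_1)}\binom{\nu}{\ell(\vec p_2)}$ (your count, matching Corollary \ref{cor:GPS}), or it is the sum over orderings of the nonzero parts paired with the plain binomials $\binom{\mu}{\ell_1}\binom{\nu}{\ell_2}$ as in the displayed regrouping in the proof of Proposition \ref{prop:poly} --- since combining the sum over compatible orderings with the multinomial would overcount by the number of distinct orderings.
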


\begin{cor}
\label{cor:van}
If $c_{a,b}^{\ell_1,\ell_2}=0$, then $c_{\vec{p}_1,\vec{p}_2}=0$ for all $\vec{p}_1,\vec{p}_2$ of lengths $\ell_1,\ell_2$.
\end{cor}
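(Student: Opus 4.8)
The plan is to read this off from Corollary \ref{cor:multi} and the concluding line of the proof of Proposition \ref{prop:poly}, the only extra ingredient being positivity. Recall from that proof that, grouping the ordered partitions by their underlying unordered partition, the binomial coefficients of Proposition \ref{prop:poly} are
\[ \lambda_{\ell_1,\ell_2} \;=\; \sum_{\substack{\vec p_1\vdash a,\ \vec p_2\vdash b\\ \ell(\vec p_1)=\ell_1,\ \ell(\vec p_2)=\ell_2}} c_{\vec p_1,\vec p_2}, \]
since an ordered partition of $a$ lying in $P_{\ell_1}$ with exactly $\ell_1$ (necessarily positive) parts is just an ordering of the parts of some unordered partition of $a$ into $\ell_1$ positive pieces, and likewise for $b$; this is exactly the regrouping that turns the binomial expansion of Proposition \ref{prop:poly} into the multinomial expansion of Corollary \ref{cor:multi}.

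Next I would use that every $\tilde c_{\mathbf P_1,\mathbf P_2}$, hence every $c_{\vec p_1,\vec p_2}=\sum_{\mathbf P_1,\mathbf P_2} c_{\mathbf P_1,\mathbf P_2}$ and every $\lambda_{\ell_1,\ell_2}$, is a sum of non-negative integers (the positivity invoked in the proof of Proposition \ref{prop:poly}, ultimately Proposition \ref{prop:pos}). Applying the binomial expansion of Proposition \ref{prop:poly} to the pair $(\ell_1,\ell_2)$ and using $c_{a,b}^{\ell_1,\ell_2}=0$ together with non-negativity of every summand, each inner sum must vanish; that is, $\lambda_{\ell_1',\ell_2'}=0$ for all $\ell_1'\le\ell_1$ and $\ell_2'\le\ell_2$, which is literally the last sentence of the proof of Proposition \ref{prop:poly}. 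In particular $\lambda_{\ell_1,\ell_2}=0$, and by the displayed identity together with non-negativity of each term, $c_{\vec p_1,\vec p_2}=0$ for every $\vec p_1,\vec p_2$ with $\ell(\vec p_1)=\ell_1$ and $\ell(\vec p_2)=\ell_2$, which is the claim.

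There is no serious obstacle; the only thing needing care is the bookkeeping between the ordered-partition formalism of Proposition \ref{prop:poly} (with its $\mathbf P\in P_k$ convention) and the unordered-partition formalism of Corollary \ref{cor:multi} — in particular checking that the partitions with no zero parts and exactly $\ell_1$ parts are precisely those contributing to the $\ell_1'=\ell_1$ term — and being explicit about which positivity statement is being used. It is perhaps worth remarking that the argument in fact yields the slightly stronger conclusion that $c_{\vec p_1,\vec p_2}=0$ whenever $\ell(\vec p_1)\le\ell_1$ and $\ell(\vec p_2)\le\ell_2$, which comes for free from the monotonicity of the binomial sum.
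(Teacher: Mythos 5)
Your proof is correct and takes essentially the same route as the paper: the paper deduces the corollary directly from Corollary \ref{cor:multi} (which is just the multinomial regrouping of the expansion in Proposition \ref{prop:poly}) together with non-negativity of the $c_{\vec p_1,\vec p_2}$, and your argument via the $\lambda_{\ell_1,\ell_2}$ of Proposition \ref{prop:poly} is the same positivity-plus-regrouping reasoning, only spelled out in the ordered-partition bookkeeping.
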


\begin{proof}
This follows from Corollary \ref{cor:multi}
\end{proof}

\subsection{Explicit polynomials from interpolation}

It is hard to find the polynomial expressions for $c_{a,b}^{\mu,\nu}$ via deformation and recursion, apart from $c_{1,k}^{\mu,\nu}=\mu\binom{\nu}{k}=\frac{1}{k!}\mu \nu(\nu-1)\cdots (\nu-k+1)$. For example, to get an expression for $c_{2,k}^{\mu,\nu}$, we would have to find a sequence of polynomials solving the recursion in Example \ref{ex:2k}. However, using multivariate polynomial interpolation we can find the polynomial expression for $c_{a,b}^{\mu,\nu}$ from its values for $\mu=0,\ldots,a$ and $\nu=0,\ldots,b$. We show this for $a \leq b \leq 4$ in the following equation. The cases $a>b$ are obtained by symmetry, changing $a\leftrightarrow b$ and $\mu\rightarrow\nu$ simultaneously.

\begin{align*}
c_{1,1}^{\mu,\nu} &= \mu\nu \\
c_{1,2}^{\mu,\nu} &= \tfrac{1}{2}\mu\nu^2 - \tfrac{1}{2}\mu\nu \\
c_{1,3}^{\mu,\nu} &= \tfrac{1}{6}\mu\nu^3 - \tfrac{1}{2}\mu\nu^2 + \tfrac{1}{3}\mu\nu \\
c_{1,4}^{\mu,\nu} &= \tfrac{1}{24}\mu\nu^4 - \tfrac{1}{4}\mu\nu^3 + \tfrac{11}{24}\mu\nu^2 - \tfrac{1}{4}\mu\nu \\
c_{2,2}^{\mu,\nu} &= \mu^2\nu^2 - \mu^2\nu - \mu\nu^2 + \mu\nu \\
c_{2,3}^{\mu,\nu} &= \tfrac{1}{2}\mu^2\nu^3 - \mu^2\nu^2 - \tfrac{1}{3}\mu\nu^3 + \tfrac{1}{2}\mu^2\nu + \tfrac{1}{2}\mu\nu^2 - \tfrac{1}{6}\mu\nu \\
c_{2,4}^{\mu,\nu} &= \tfrac{1}{2}\mu^2\nu^4 - 2\mu^2\nu^3 - \tfrac{1}{3}\mu\nu^4 + \tfrac{5}{2}\mu^2\nu^2 + \mu\nu^3 - \mu^2\nu - \tfrac{2}{3}\mu\nu^2 \\
c_{3,3}^{\mu,\nu} &= \tfrac{5}{2}\mu^3\nu^3 - 7\mu^3\nu^2 - 7\mu^2\nu^3 + \tfrac{9}{2}\mu^3\nu + \tfrac{41}{2}\mu^2\nu^2 + \tfrac{9}{2}\mu\nu^3 - \tfrac{27}{2}\mu^2\nu - \tfrac{27}{2}\mu\nu^2 + 9\mu\nu \\
c_{3,4}^{\mu,\nu} &= \tfrac{2}{3}\mu^3\nu^4 - 2\mu^3\nu^3 - \mu^2\nu^4 + 2\mu^3\nu^2 + \tfrac{5}{2}\mu^2\nu^3 + \tfrac{3}{8}\mu\nu^4 - \tfrac{2}{3}\mu^3\nu - 2\mu^2\nu^2 - \tfrac{3}{4}\mu\nu^3 \\
& + \tfrac{1}{2}\mu^2\nu + \tfrac{11}{24}\mu\nu^2 - \tfrac{1}{12}\mu\nu \\
c_{4,4}^{\mu,\nu} &= \tfrac{13}{6}\mu^4\nu^4 - 4\mu^4\nu^3 - 4\mu^3\nu^4 - \tfrac{3}{2}\mu^4\nu^2 - 12\mu^3\nu^3 - \tfrac{3}{2}\mu^2\nu^4 + \tfrac{10}{3}\mu^4\nu + 60\mu^3\nu^2 + 60\mu^2\nu^3 \\
& + \tfrac{10}{3}\mu\nu^4 - 44\mu^3\nu - \tfrac{1021}{6}\mu^2\nu^2 - 44\mu\nu^3 + \tfrac{335}{3}\mu^2\nu + \tfrac{335}{3}\mu\nu^2 - 71\mu\nu 
\end{align*}

\section{Applications to curves and quivers}

\subsection{Vanishing of Gromov-Witten invariants}

Recall the definition of certain Gromov-Witten invariants from \S\ref{S:GW}. They are related to the coefficients $c_{a,b}^{\mu,\nu}$ of standard scattering diagrams as follows.

\begin{prop}
\label{prop:van}
We have
\[ N_{\vec{p}_1,\vec{p}_2} = \frac{1}{\gcd(\vec{p}_1,\vec{p}_2)}\sum_{l|\gcd(\vec{p}_1,\vec{p}_2)}\frac{(-1)^{l+1}}{l}c_{\frac{1}{l}\vec{p}_1,\frac{1}{l}\vec{p}_2}. \]
In particular, if $\gcd(\vec{p}_1,\vec{p}_2)=1$, then
\[ N_{\vec{p}_1,\vec{p}_2} = c_{\vec{p}_1,\vec{p}_2}. \]
\end{prop}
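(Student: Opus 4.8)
The plan is to compute the logarithm of a ray function of the fully deformed diagram in two ways and match the coefficients of a monomial.

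First I would fix the primitive direction $(a,b)$ underlying $(\vec p_1,\vec p_2)$ and let $k = \gcd(\vec p_1,\vec p_2)$ be the common factor with $\vec p_1 \vdash ka$, $\vec p_2 \vdash kb$, so the curve class is $\beta = kH - \dots$. Then I would pass to the multi-variable diagram $\mathfrak D$ of Proposition \ref{prop:GPS} over $R = \mathbb C\llbracket t_{1,1},\dots,t_{1,\mu},t_{2,1},\dots,t_{2,\nu}\rrbracket$ and look at its ray $\mathfrak d$ in direction $(a,b)$. By Proposition \ref{prop:GPS}, the coefficient of the monomial $\big(\prod_i t_{1,i}^{p_{1,i}}\big)\big(\prod_i t_{2,i}^{p_{2,i}}\big)x^{ka}y^{kb}$ in $\log f_{\mathfrak d}$ is $k\,N_{ka,kb}[\mathbf P_1,\mathbf P_2]$, which equals $k\,N_{\vec p_1,\vec p_2}$, the count being unchanged under permuting the parts by generality of the points.

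For the second computation I would use the factorized form of the ray function: by the proof of Proposition \ref{prop:poly} and Corollary \ref{cor:multi}, $f_{\mathfrak d} = \prod_{j>0}\prod_{\mathbf Q_1\vdash ja,\ \mathbf Q_2\vdash jb}\big(1 + t^{\mathbf Q_1}t^{\mathbf Q_2}x^{ja}y^{jb}\big)^{c_{\mathbf Q_1,\mathbf Q_2}}$, where $t^{\mathbf Q_i} = \prod_m t_{i,m}^{q_{i,m}}$ and the exponents $c_{\mathbf Q_1,\mathbf Q_2}$ do not depend on $\mu,\nu$. Taking $\log$ of this product via $\log\prod(1+w_\alpha)^{c_\alpha} = \sum_\alpha c_\alpha\sum_{r\ge1}\tfrac{(-1)^{r+1}}{r}w_\alpha^r$, the $r$-th power of the factor indexed by $(\mathbf Q_1,\mathbf Q_2)$ contributes only to the monomial $t^{r\mathbf Q_1}t^{r\mathbf Q_2}x^{rja}y^{rjb}$. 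Collecting, the coefficient of $\big(\prod_i t_{1,i}^{p_{1,i}}\big)\big(\prod_i t_{2,i}^{p_{2,i}}\big)x^{ka}y^{kb}$ in $\log f_{\mathfrak d}$ comes out as $\sum_l \tfrac{(-1)^{l+1}}{l}\,c_{\frac1l\mathbf P_1,\frac1l\mathbf P_2}$, the sum over those $l$ dividing every part of $\mathbf P_1$ and $\mathbf P_2$. Equating this with $k\,N_{\vec p_1,\vec p_2}$, summing over orderings of the parts (which replaces $c_{\mathbf Q_1,\mathbf Q_2}$ by $c_{\vec q_1,\vec q_2}$, Corollary \ref{cor:multi}), and dividing by $k$ gives the claimed identity. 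The ``in particular'' is then immediate, as $\gcd(\vec p_1,\vec p_2)=1$ leaves only the term $l=1$.

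The part that needs care is the bookkeeping: checking that the elementary factors of $f_{\mathfrak d}$ are indexed exactly by pairs of partitions of common multiples of $a$ and $b$, identifying the multiple-cover index $l$ of the logarithm expansion with the prefactor $k$ of the GPS formula, and verifying that the passage between ordered and unordered partitions is compatible with Definition \ref{dfn:N} and Corollary \ref{cor:multi}. Beyond that, the proof is just the formal identity for the logarithm of a product followed by a Möbius-type rearrangement, with no genuine computation.
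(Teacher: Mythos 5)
Your proposal is correct and follows essentially the same mechanism as the paper's proof --- take the logarithm of the factorized ray function and match it against the GPS formula, the sum over $l$ coming from expanding $\log(1+w)$ --- but you carry it out in the multi-parameter ring $\mathbb{C}\llbracket t_{1,1},\dots,t_{2,\nu}\rrbracket$ using Proposition \ref{prop:GPS} directly, whereas the paper specializes all $t_{i,j}=t$ and compares the single-variable expansion with Corollaries \ref{cor:GPS} and \ref{cor:multi}. Your refined version is in fact the more honest one: after specializing, partitions with the same multiplicity type produce the same multinomial in $\mu,\nu$, so the single-variable comparison only determines certain weighted sums of the $N_{\vec p_1,\vec p_2}$, and isolating an individual invariant really requires matching the coefficient of the specific monomial $\prod_i t_{1,i}^{p_{1,i}}\prod_i t_{2,i}^{p_{2,i}}x^{ka}y^{kb}$ as you do. Two bookkeeping points. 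First, no summing over orderings is needed at the end: a fixed monomial corresponds to exactly one factor for each admissible $l$, and $c_{\mathbf P_1,\mathbf P_2}$ depends only on the underlying multisets by symmetry of the construction, which is how it is identified with $c_{\vec p_1,\vec p_2}$; an actual sum over orderings here would overcount. Second, and more importantly, your opening identification ``let $k=\gcd(\vec p_1,\vec p_2)$ be the common factor with $\vec p_1\vdash ka$, $\vec p_2\vdash kb$'' conflates two different quantities: what your computation genuinely produces is the prefactor $1/k$ with $k$ defined by $|\vec p_1|=ka$, $|\vec p_2|=kb$ for $(a,b)$ primitive (equivalently $k=\gcd(|\vec p_1|,|\vec p_2|)$), while the index set of the sum consists of those $l$ dividing every entry of $\vec p_1,\vec p_2$ (all other terms vanish since no corresponding factor exists); the gcd of the entries can be strictly smaller than $k$, e.g.\ $\vec p_1=\vec p_2=(1,1)$ has $k=2$ but entry-gcd $1$, and there $N_{(1,1),(1,1)}=2=\tfrac12 c_{(1,1),(1,1)}$, not $c_{(1,1),(1,1)}$. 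So keep $k$ as the divisibility of the class throughout; with that reading your argument is complete and correct, and the ``in particular'' clause should correspondingly be invoked for primitive classes.
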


\begin{proof}
By Definition \ref{def:c} we have
\begin{align*}
\text{log }f_{a,b}^{\mu,\nu} &= \sum_{k>0} c_{ka,kb}^{\mu,\nu}\text{log}(1+x^{ka}y^{kb}) \\
&= \sum_{k>0} c_{ka,kb}^{\mu,\nu} \sum_{l>0} \frac{(-1)^{l+1}}{l} x^{kla}y^{kla} \\
&= \sum_{m>0}\left(\sum_{l|k}\frac{(-1)^{k/l+1}}{k/l}c_{la,lb}^{\mu,\nu}\right)x^{ka}y^{kb}
\end{align*}
Comparing with Corollary \ref{cor:GPS} and Corollary \ref{cor:multi} this gives the desired statement.
\end{proof}

From Proposition \ref{prop:van} and Corollary \ref{cor:van} we get the following.

\begin{cor}
\label{cor:vanishing}
If $c_{a,b}^{\mu,\nu}=0$, then $N_{\vec{p}_1,\vec{p}_2}=0$ for all $\vec{p}_1,\vec{p}_2$ of lengths $\mu,\nu$ with $\gcd(\vec{p}_1,\vec{p}_2)=1$. Intuitively, this means the conditions are too special to be satisfied by any curve.
\end{cor}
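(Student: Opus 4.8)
The plan is to deduce the statement directly from the two immediately preceding results: the explicit formula of Proposition~\ref{prop:van}, which expresses each Gromov--Witten invariant $N_{\vec p_1,\vec p_2}$ in terms of scattering coefficients $c_{\frac1l\vec p_1,\frac1l\vec p_2}$, and the vanishing statement Corollary~\ref{cor:van}, which says that $c_{a,b}^{\ell_1,\ell_2}=0$ forces $c_{\vec p_1,\vec p_2}=0$ for all partitions $\vec p_1,\vec p_2$ of $a,b$ of lengths $\ell_1,\ell_2$. No new geometric input is needed; all the content is already packaged in those two results (and hence, going further back, in the GPS correspondence of Proposition~\ref{prop:GPS} and the polynomiality of Proposition~\ref{prop:poly}).

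The first step is to apply Corollary~\ref{cor:van} with $(\ell_1,\ell_2)=(\mu,\nu)$: since we assume $c_{a,b}^{\mu,\nu}=0$, this gives $c_{\vec p_1,\vec p_2}=0$ for every pair of partitions $\vec p_1,\vec p_2$ of $a,b$ of lengths $\mu,\nu$. Here I would pause briefly over conventions, to confirm that the quantity $c_{\vec p_1,\vec p_2}$ in Corollary~\ref{cor:van} is the $\mu,\nu$-independent coefficient from the multinomial expansion of Corollary~\ref{cor:multi}, in which $\vec p_1$ partitions $a$ and $\vec p_2$ partitions $b$ --- which is exactly the range of $\vec p_1,\vec p_2$ in the claim.

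The second step is: for such a pair with the additional hypothesis $\gcd(\vec p_1,\vec p_2)=1$, the sum over $l\mid\gcd(\vec p_1,\vec p_2)$ in Proposition~\ref{prop:van} collapses to its $l=1$ term, so $N_{\vec p_1,\vec p_2}=c_{\vec p_1,\vec p_2}$. Combining with the first step yields $N_{\vec p_1,\vec p_2}=c_{\vec p_1,\vec p_2}=0$, which is the assertion.

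I do not expect a genuine obstacle here: the proof is a two-line deduction once Proposition~\ref{prop:van} and Corollary~\ref{cor:van} are available, and the only place I would slow down is the bookkeeping of the length and partition conventions shared by Corollary~\ref{cor:multi}, Corollary~\ref{cor:van}, and Definition~\ref{dfn:N}, so as to be certain that ``lengths $\mu,\nu$, summing to $a,b$'' is precisely the common range of validity. (If one wanted the stronger vanishing without the $\gcd=1$ hypothesis, one would instead invoke Corollary~\ref{cor:van} for each of the coefficients $c_{\frac1l\vec p_1,\frac1l\vec p_2}$ occurring in the full formula of Proposition~\ref{prop:van}; but for the stated corollary this refinement is not needed.)
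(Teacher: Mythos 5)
Your proposal is correct and follows exactly the paper's route: the paper deduces Corollary~\ref{cor:vanishing} precisely by combining Corollary~\ref{cor:van} (vanishing of the partition coefficients $c_{\vec p_1,\vec p_2}$ when $c_{a,b}^{\mu,\nu}=0$) with the $\gcd=1$ case of Proposition~\ref{prop:van}, which gives $N_{\vec p_1,\vec p_2}=c_{\vec p_1,\vec p_2}$. Nothing is missing; your bookkeeping of the partition/length conventions matches the paper's.
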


\begin{ex}
Consider Gromov-Witten invariants of degree $d$ on $\mathbb{P}^2$, i.e. $(a,b)=(d,d)$. 

We know that for $\mu=1$ or $\nu=1$ we have $f_{(1,1)}^{\mu,\nu}=(1+txy)^\nu$ or $f_{(1,1)}^{\mu,\nu}=(1+txy)^\mu$, respectively. In particular for $d>1$ we have $c_{d,d}^{\mu,\nu}=0$ in this case. By the corollary above this means
\[ N_{d,\vec{p}}=0 \]
whenever $\gcd(d,\vec{p})=1$. 

For $\mu=\nu=2$ we have $f_{(1,1)}^{2,2}=(1-txy)^{-4}$. One easily checks that $c_{d,d}^{2,2}=0$ for $d \neq 2^k$. This implies
\[ N_{a+b,c+d}=0 \]
whenever $\gcd(a,b,c,d)=1$ and $d=a+b=c+d \neq 2^k$.

In the case $d=3$ this gives
\[ N_{3,2+1} = N_{3,1+1+1} = N_{2+1,2+1} = 0, \]
as noticed in \cite{GPS}, \S6.4.
\end{ex}

\subsection{Existence of quiver representations}

Reall the definition of the moduli space of quiver representations $M_{(a,b)}^{(1,0)-st}(Q_{\mu,\nu})$ from \S\ref{S:quiv}. From Theorem \ref{thm:full} and Proposition \ref{prop:chi} we get the following.

\begin{cor}
\label{cor:nonempty}
If $(a,b)$ is in the dense region, then $\chi(M_{(a,b)}^{(1,0)-st}(Q_{\mu,\nu}))>0$ and in particular $M_{(a,b)}^{(1,0)-st}(Q_{\mu,\nu})$ is non-empty.
\end{cor}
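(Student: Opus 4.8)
The plan is to read the statement off from the two results cited just before it: the fullness of the dense region (Theorem~\ref{thm:full}, i.e.\ Theorem~\ref{thm:main}(a)) and the identification of scattering coefficients with Euler characteristics of $(1,0)$-stable bipartite quiver moduli (Proposition~\ref{prop:chi}). The only extra ingredient is the positivity $c_{a,b}^{\mu,\nu}\ge 0$ of Proposition~\ref{prop:pos}, which turns ``$c_{a,b}^{\mu,\nu}\neq 0$'' into ``$c_{a,b}^{\mu,\nu}>0$''. Note that when $\mu\nu\le 4$ the defining inequality of the dense region has no solutions, so the hypothesis is vacuous; hence we may assume $\mu\nu>4$, which is exactly the range in which Theorem~\ref{thm:full} holds.

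First I would treat the case $(a,b)$ primitive. Since $(a,b)$ is in the dense region, the ray in direction $(a,b)$ lies in $\Phi^{\mu,\nu}$, which is full in $\mathfrak{D}^{\mu,\nu}$ by Theorem~\ref{thm:full}; by Definition~\ref{def:full} this means $c_{a,b}^{\mu,\nu}\neq 0$, and together with Proposition~\ref{prop:pos} we conclude $c_{a,b}^{\mu,\nu}>0$. Proposition~\ref{prop:chi} then gives $\chi\bigl(M_{(a,b)}^{(1,0)-st}(Q_{\mu,\nu})\bigr)=\chi_{a,b}^{\mu,\nu}=c_{a,b}^{\mu,\nu}>0$, and a variety with nonzero Euler characteristic is non-empty since $\chi(\emptyset)=0$, which yields the ``in particular'' clause.

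The step I expect to require real work is the non-primitive case $(a,b)=d(a_0,b_0)$ with $(a_0,b_0)$ primitive and $d>1$. The slope is unchanged, so $(a_0,b_0)$ also lies in the dense region, and fullness (which in Definition~\ref{def:full} is stated for not-necessarily-primitive lattice points) still gives $c_{a,b}^{\mu,\nu}>0$; the obstacle is that Proposition~\ref{prop:chi} identifies $c_{a,b}^{\mu,\nu}$ with an \emph{unframed} Euler characteristic only when $(a,b)$ is primitive. For $d>1$ one would instead go through the $B$-function relation $f_{(a_0,b_0)}^{\mu,\nu}=\bigl(B_{(a_0,b_0)}\bigr)^{\mu/a_0}$, whose relevant coefficient computes $\chi\bigl(M_{(a,b)}^{(1,0)-st,B}(Q_{\mu,\nu})\bigr)$, and then invoke the functional equations of \cite{RW} (Theorem~8.1, Corollary~8.2) relating framed and unframed Euler characteristics, arguing by induction on $d$ and using positivity of all the intervening coefficients together with the triangular shape of those relations to propagate strict positivity down to $\chi\bigl(M_{(a,b)}^{(1,0)-st}(Q_{\mu,\nu})\bigr)$. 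Alternatively, and more economically, one restricts the corollary to primitive $(a,b)$ — the case parallel to Theorem~\ref{thm:main}, and all that the intended applications require.
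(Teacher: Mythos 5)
Your argument is exactly the paper's: the corollary is deduced in one line from Theorem \ref{thm:full} (fullness of the dense region) together with Proposition \ref{prop:chi}, with positivity of the coefficients (Proposition \ref{prop:pos}) turning nonvanishing into strict positivity and $\chi(\emptyset)=0$ giving non-emptiness. Your further observation that Proposition \ref{prop:chi} is stated only for primitive $(a,b)$, so that the non-primitive case needs either the framed/unframed functional equations of \cite{RW} or a restriction of the statement to primitive vectors, is a genuine subtlety that the paper's one-line derivation glosses over, and your treatment of it is sound.
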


\newpage

%%%% Literature

\end{document}